\newtheorem{theorem}{Theorem}[section]
\newtheorem{lemma}[theorem]{Lemma}%[section]
\newtheorem{proposition}[theorem]{Proposition}%[section]
\newtheorem{definition}{Definition}[section]
\newtheorem{notation}{Notation}[section]
\newtheorem{remark}{Remark}[section]
\newcommand{\N}{\mathbb{N}}
\newcommand{\R}{\mathbb{R}}
\newcommand{\1}{\mathbf{1}}
\newcommand{\Exp}[1]{ \mathbf{E}\left(\hspace{1mm} #1 \hspace{1mm}\right) }
\newcommand{\Prob}[1]{ \mathbf{P}\left(\hspace{1mm} #1 \hspace{1mm}\right) }
\newcommand{\scattering}{\mathcal{S}}
\begin{document}
\title{Stochastic Integral Operator Model \\ for IS, US and WSSUS Channels}
\author{Onur Oktay}
\date{ }
\maketitle

\begin{abstract}
In this article, we proved that, under weak and natural requirements, uncorrelated scattering (in particular WSSUS) channels can be modeled as stochastic integrals. Moreover, if we assume (not only uncorrelated but also) independent scattering, then the stochastic integral kernel is an additive stochastic process. This allows us to decompose an IS channel into a sum of independent channels; one deterministic, one with a Gaussian kernel, and two others described by the Levy measure of the additive process.
\end{abstract}

%\begin{keywords}
%WSSUS, Stochastic integrals, Levy measure, Additive process
%\end{keywords}
%
%\begin{AMS}
%60G51,60G10,60G57,60H25,60H05,47B80,47H40,94A40,94A05
%\end{AMS}

\section{Introduction}

A channel is the environment between the transmitter and the receiver. It is usually modeled as a linear operator $H$. The modifications and disturbances of the signal through the channel is embedded in this operator. 

A linear time variant channel has the operator representation $$Hf(t) = \int h(t,u)f(t-u)\ du$$ whether $h(t,u)$ is an actual function or a symbol. $h$ is the time variant impulse response of the channel with the autocorrelation
%\begin{eqnarray}\label{def:R_h}
$$R_h(t,s,u,v) = \Exp{ h(t,u)h(s,v) } - \Exp{h(t,u)}\Exp{h(s,v)}$$
%\end{eqnarray}
where $R_h$ is a positive definite function or symbolic impulse response of the bilinear operator $R$ defined by $$R(f,g)(t,s) = \Exp{Hf(t)Hg(s)} - \Exp{Hf(t)}\Exp{Hg(s)}$$ with the operator representation $$R(f,g)(t,s) = \iint R_h(t,s,u,v) f(t-u)g(s-v)\ dudv$$ %for all pair of signals $f,g$ and $t,s\in\R$.
The WSSUS channel assumption in the literature is the following: $\Exp{h(t,u)} = 0$ %for all $t,u\in\R$ 
and
\begin{eqnarray}\label{def:WSSUS}
%%\Exp{h(t,u)} = 0 \hspace{5mm}\textnormal{and}\hspace{5mm}
%\Exp{ h(t,u)h(s,v) } = P(t-s,u)\delta(u-v).
R_h(t,s,u,v) = P(t-s,u)\delta(u-v).
\end{eqnarray}
so that $\Exp{Hf(t)} = 0$ %for all $t\in\R$ 
and
\begin{eqnarray}\label{WSSUS_op}
\Exp{ Hf(t)Hg(s) } = \int P(t-s,u) f(t-u)g(s-u)\ du.
\end{eqnarray}
A channel has independent scattering (IS) property if whenever $u\neq v$, $h(t,u)$ and $h(s,v)$ are independent random variables. IS implies the uncorrelated scattering (US) property
\begin{eqnarray}\label{def:US}
%\Exp{ h(t,u)h(s,v) } = P(t,s,u)\delta(u-v).
R_h(t,s,u,v) = P(t,s,u)\delta(u-v).
\end{eqnarray}
If a channel has the IS property, then $Hf$ and $Hg$ are independent processes whenever \\ $supp(f)\cap supp(g) = \emptyset$. We call the latter the {\it weak-IS property}. Similarly, US property \eqref{def:US} implies
\begin{eqnarray}\label{US_op}
\Exp{Hf(t)Hg(t)} = 0.
\end{eqnarray}
for each pair of signals $f,g$ for which $supp(f)\cap supp(g) = \emptyset$. We call \eqref{US_op} the {\it weak-US property}.

In this paper, we assume that $H$ is defined on the indicator functions of bounded intervals. This is a realistic assumption since a channel can be tested/sounded with finite time duration bang-bang (switch on/off) signals in applications. For simplicity we omitted the carrier frequency in our discussion without the loss of generality. 
We also assume that $H$ satisfies the following continuity property: For each $t\in\R$,
\begin{eqnarray}\label{Cont_mean2_op}
\lim_{|b-a|\to 0} \Exp{ |H\1_{[a,b)}(t)|^2 } = 0.
\end{eqnarray}

In Section~\ref{sec:SIO_rep}, we show that $H$ has a stochastic integral operator (SIO) representation if it satisfies \eqref{US_op} and \eqref{Cont_mean2_op}. We define the impulse response and spreading functions as stochastic processes and derive related SIO representations of $H$. In Section~\ref{sec:WSSUS}, we derive what the IS, US and WSSUS properties translate into for a SIO. In Section~\ref{sec:H_decomp}, we show that if $H$ has the weak-IS property, then it has a decomposition as a sum of four independent operators, each of which capture a distinct characteristic of the channel.

\section{SIO representation of weak-US channels}\label{sec:SIO_rep}

We begin this section with a definition. We shall construct each of those and prove the equalities in the coming subsections.

\begin{definition}\normalfont
We call the stochastic processes related to the operator $H$ as below.
\begin{enumerate}
\item $X(t,u)$ is the integrated kernel and $X(t,du)$ the kernel symbol, where $$Hf(t) = \int f(u)X(t,du) $$
\item $Y(t,u)$ is the integrated impulse response and $Y(t,du)$ the impulse response symbol, where $$Hf(t) = \int f(t-u)Y(t,du) $$
\item $\sigma(t,\xi)$ is the Kohn-Nirenberg symbol, where $$Hf(t) = \int e^{2\pi it\xi} \widehat{f}(\xi) \sigma(t,\xi)\ d\xi$$ 
\item $\eta(u,\gamma)$ is the integrated spreading symbol and $\eta(du,\gamma)$ the spreading symbol, where $$Hf(t) = \iint e^{2\pi it\gamma} f(t-u) \eta(du,\gamma) d\gamma $$
\end{enumerate}
\end{definition}

\subsection{Stochastic kernel}\label{sec:SIO_rep_X}

We define a random process $X$ as follows: For every $t\in\R$, 
$$X(t,u) = \left\{\begin{array}{cl}
H\1_{[0,u)}(t) &; u>0 \\
0 &; u=0 \\
-H\1_{[u,0)}(t)  &; u<0 \\ 
\end{array}\right.$$
For every $t,u\in\R$, $\Exp{X(t,u)}=0$.
By \eqref{US_op}, we have
\begin{eqnarray}\label{US_X}
\Exp{ (X(t,u_1) - X(t,u_2)) (X(t,u_3) - X(t,u_4))} = 0.
\end{eqnarray}
 for all $t\in\R$ and $u_1>u_2\geq u_3>u_4$. By \eqref{Cont_mean2_op}, we have
\begin{eqnarray}\label{Cont_mean2_X}
\lim_{|b-a|\to 0} \Exp{ |X(t,b) - X(t,a)|^2 } = 0,
\end{eqnarray}
i.e., for each $t\in\R$, $X(t,.)$ is continuous in the mean-squared. In particular, for each $\epsilon>0$
$$ \lim_{|b-a|\to 0} \Prob{|X(t,b) - X(t,a)|>\epsilon} \leq \epsilon^{-2} \lim_{|b-a|\to 0} \Exp{ |X(t,b) - X(t,a)|^2 } =0. $$
Thus, $X(t,.)$ is continuous in probability. Next, for any $t\in\R$, we define $\mu_t(\{u\}) = 0$ and 
\begin{eqnarray}\label{def:mu}
\mu_t([u,v)) = \Exp{|X(t,u)-X(t,v)|^2}.
\end{eqnarray}
If $u<v<w$, by \eqref{US_X} we have
\begin{eqnarray*}
\mu_t([u,w)) &=& \Exp{|X(t,u) \pm X(t,v) -X(t,w)|^2} \\
&=& \Exp{|X(t,u)-X(t,v)|^2} + \Exp{|X(t,v)-X(t,w)|^2} \\ 
&& + 2\Exp{(X(t,u)-X(t,v))(X(t,v)-X(t,w))} \\
&=& \mu_t([u,v)) + \mu_t([v,w))
\end{eqnarray*}
so $\mu_t$ is an additive set function of intervals. Moreover,
\begin{lemma}\label{lemma:mu}
$\mu_t$ is a premeasure defined on the set algebra of the finite unions of the intervals. 
\end{lemma}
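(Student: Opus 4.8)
The plan is to verify that $\mu_t$ satisfies the two defining properties of a premeasure on the algebra $\mathcal{A}$ of finite unions of half-open intervals: nonnegativity together with finite additivity, and countable additivity whenever a countable disjoint union of sets in $\mathcal{A}$ happens to lie in $\mathcal{A}$. Nonnegativity is immediate from \eqref{def:mu}, since each $\mu_t([u,v)) = \Exp{|X(t,u)-X(t,v)|^2}$ is a second moment. Moreover \eqref{Cont_mean2_X} shows $\mu_t$ is finite on sufficiently short intervals, and since every bounded interval is a finite concatenation of short ones, the already-established relation $\mu_t([u,w)) = \mu_t([u,v)) + \mu_t([v,w))$ forces finiteness on every bounded interval. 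I would then extend $\mu_t$ to all of $\mathcal{A}$ by declaring $\mu_t\big(\bigsqcup_{k=1}^{n} I_k\big) = \sum_{k=1}^{n} \mu_t(I_k)$ for disjoint half-open intervals $I_k$.

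Before using this extension I would check it is well-defined: any two representations of a set $A\in\mathcal{A}$ as finite disjoint unions of half-open intervals admit a common refinement obtained by cutting at all endpoints involved, and the interval additivity of $\mu_t$ shows both representations yield the same total. The same refinement argument upgrades the two-piece additivity to genuine finite additivity (and hence finite subadditivity) on $\mathcal{A}$, so it remains only to prove countable additivity.

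For countable additivity, suppose $[a,b) = \bigsqcup_{n\geq 1} I_n$ with the $I_n = [a_n,b_n)$ pairwise disjoint and the union lying in $\mathcal{A}$; it suffices to treat a single interval on the left, the general case following by finite additivity. The bound $\mu_t([a,b)) \geq \sum_{n=1}^{N} \mu_t(I_n)$ holds for every $N$ by finite additivity and monotonicity, so $\mu_t([a,b)) \geq \sum_n \mu_t(I_n)$. For the reverse inequality I would run a compactness argument powered by the mean-square continuity \eqref{Cont_mean2_X}: given $\epsilon>0$, choose $\delta>0$ with $\mu_t([b-\delta,b)) < \epsilon$ and, for each $n$, choose $\delta_n>0$ with $\mu_t([a_n-\delta_n,a_n)) < \epsilon 2^{-n}$. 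The open intervals $(a_n-\delta_n,b_n)$ then cover the compact set $[a,b-\delta]$, so by Heine--Borel finitely many of them, indexed by a finite set $F$, already cover it, whence $[a,b-\delta) \subseteq \bigcup_{n\in F}[a_n-\delta_n,b_n)$. Finite subadditivity gives $\mu_t([a,b-\delta)) \leq \sum_{n\in F} \mu_t([a_n-\delta_n,b_n)) \leq \sum_n \big(\mu_t(I_n) + \epsilon 2^{-n}\big)$, and adding $\mu_t([b-\delta,b)) < \epsilon$ yields $\mu_t([a,b)) \leq \sum_n \mu_t(I_n) + 2\epsilon$; letting $\epsilon\to 0$ closes the gap.

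I expect the reverse inequality in the last paragraph to be the main obstacle, since it is where countable additivity genuinely enters and where the continuity hypothesis \eqref{Cont_mean2_X} must be converted into quantitative control of the ``fattening'' errors $\mu_t([a_n-\delta_n,a_n))$ and $\mu_t([b-\delta,b))$. By contrast, the nonnegativity, well-definedness, and finite-additivity steps amount to routine bookkeeping with common refinements.
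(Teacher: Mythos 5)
Your proof is correct, but it takes a genuinely different route from the paper's. Both proofs obtain the easy inequality $\mu_t([a,b))\geq\sum_k\mu_t([a_k,b_k))$ the same way, from finite additivity; the difference is in the reverse inequality. You run the classical Stieltjes--Carath\'eodory compactness argument: fatten each $[a_n,b_n)$ to $[a_n-\delta_n,b_n)$ with $\mu_t([a_n-\delta_n,a_n))<\epsilon 2^{-n}$, shrink $[a,b)$ to the compact set $[a,b-\delta]$, extract a finite subcover by Heine--Borel, and conclude with finite subadditivity. The paper instead looks at the complementary intervals $[c_k,d_k)$ of the first $N$ pieces inside $[a,b)$, observes that their total Lebesgue length tends to $0$ as $N\to\infty$, and uses the mean-square continuity \eqref{Cont_mean2_X} to bound each $\mu_t([c_k,d_k))$ by $\epsilon$, ending with the estimate $\mu_t([a,b))-\sum_k\mu_t([a_k,b_k))<(m_N+1)\epsilon$. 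Your approach buys robustness: the geometric weights $\epsilon 2^{-n}$ control the total fattening error uniformly in the (unknown, possibly large) size of the finite subcover, so letting $\epsilon\to 0$ is unproblematic. In the paper's estimate, by contrast, the count $m_N$ is coupled to $\epsilon$ (one must take $N$ large enough that the leftover length drops below the $\delta$ that continuity furnishes for that $\epsilon$), so the product $(m_N+1)\epsilon$ is not manifestly small; indeed, passing from ``small total length'' to ``small total $\mu_t$-measure'' is an absolute-continuity property that mere continuity of $u\mapsto X(t,u)$ in mean square does not supply (a Cantor-type continuous distribution function illustrates the obstruction). So your covering argument is not merely an alternative: it is the standard, watertight way to close exactly the step that the paper's shorter bookkeeping treats most delicately.
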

\begin{proof}
Clearly $\mu_t\geq 0$ and $\mu_t(\emptyset) = 0$. It is enough to prove that if $([a_k,b_k))_{k\in\N}$ are disjoint intervals such that $$[a,b) = \bigcup_{k\in\N} [a_k,b_k)$$ for some $a,b\in\R$, then $$\mu_t([a,b)) = \sum_{k\in\N} \mu_t([a_k,b_k)).$$ The other cases are similar, and the desired result follows from this at once.

Now, for any $n\in\N$, the complement of the finite union of disjoint intervals $([a_k,b_k))_{k=0}^n$ in $[a,b)$ is a finite union of disjoint intervals in the same form $([c_k,d_k))_{k=0}^{m_n}$. Thus, we have a disjoint union $$[a,b) = \bigcup_{k=0}^n [a_k,b_k) \cup \bigcup_{k=0}^{m_n} [c_k,d_k)$$ where clearly $|m_n-n|\leq 1$. Since $\mu_t$ is finitely additive, we have 
\begin{eqnarray}\label{eq:mu1}
\mu_t([a,b)) = \sum_{k=0}^n \mu_t([a_k,b_k)) + \sum_{k=0}^{m_n} \mu_t([c_k,d_k)) \geq \sum_{k=0}^n \mu_t([a_k,b_k)).
\end{eqnarray}
Increasing and bounded from above, the sequence of the partial sums in \eqref{eq:mu1} converges:
\begin{eqnarray}\label{eq:mu2}
\mu_t([a,b)) \geq \sum_{k\in\N} \mu_t([a_k,b_k))
\end{eqnarray}
Second, for any $\delta>0$, there is a $N\in\N$ such that $$\sum_{k=0}^{m_N} (d_k-c_k) <\delta.$$ By \eqref{Cont_mean2_X}, for $m\in\N$ and for any $\epsilon>0$, there is a $\delta>0$, $$\sum_{k=0}^{m} (d_k-c_k) <\delta \Longrightarrow \sum_{k=0}^{m} \mu_t([c_k,d_k)) <(m+1)\epsilon.$$ Puting these two together, for any $\epsilon>0$, there is a $N\in\N$ $$\mu_t([a,b)) - \sum_{k\in\N} \mu_t([a_k,b_k)) \leq \mu_t([a,b)) - \sum_{k=0}^N \mu_t([a_k,b_k)) = \sum_{k=0}^{m_N} \mu_t([c_k,d_k)) < (m_N+1)\epsilon.$$ Left hand side of the last inequality doesn't depend on $N$ or $\epsilon$. Thus, as $\epsilon\to 0$, we get the equality in \eqref{eq:mu2}
\end{proof}

A premeasure can be extended to a measure in a very standart way, e.g., see \cite{Folland84}. This measure is defined on the $\sigma$-algebra of sets generated by the intervals of the form $[a,b)$, which is the Borel algebra of $\R$. With no confusion, we call this measure also $\mu_t$. Since $\mu_t$ is a continuous $\sigma$-finite Borel measure, it enjoys similar properties as the Lebesgue measure. We summarize some of these properties which we shall need later in Lemma~\ref{lemma:L2_mu} without proof.

\begin{lemma}\label{lemma:L2_mu}
Let $p\in[1,\infty)$. For any $t\in\R$, $L^p(\R,\mu_t)$ is translation invariant, and step functions are dense in $L^p(\R,\mu_t)$. Moreover, any bounded Borel measurable function with compact support is in $L^p(\R,\mu_t)$, and the space $C_c^{\infty}(\R)$ of infinitely differentiable functions with compact support is dense in $L^p(\R,\mu_t)$.
\end{lemma}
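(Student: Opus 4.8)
The plan is to treat all four assertions as consequences of the single fact that, for each fixed $t$, $\mu_t$ is a locally finite, atomless, $\sigma$-finite Borel measure on $\R$: it is finite on every bounded interval by \eqref{def:mu} together with \eqref{Cont_mean2_X}, and it is atomless because $\mu_t(\{u\})=0$ by construction. A measure with these properties is a Radon measure and is therefore both inner and outer regular on sets of finite measure. I would establish the three routine assertions first and then isolate translation invariance, which is the only one that genuinely needs more than regularity.

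For the containment statement, if $g$ is a bounded Borel function supported in a compact set $K$, then $\int_\R |g|^p\, d\mu_t \le \|g\|_\infty^p\, \mu_t(K) < \infty$ since $\mu_t$ is finite on bounded sets, so $g \in L^p(\R,\mu_t)$. For density of step functions I would start from the standard fact that simple functions are dense in $L^p$ of any measure, so it suffices to approximate $\1_E$ for a Borel set $E$ of finite $\mu_t$-measure by a step function. Because the half-open intervals generate the Borel $\sigma$-algebra and $\mu_t$ is obtained from the premeasure of Lemma~\ref{lemma:mu} by Carath\'eodory extension, outer regularity furnishes, for each $\epsilon>0$, a finite union $A$ of half-open intervals with $\mu_t(E \triangle A) < \epsilon$; then $\|\1_E - \1_A\|_{L^p(\mu_t)}^p = \mu_t(E\triangle A) < \epsilon$ and $\1_A$ is a step function.

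Density of $C_c^\infty(\R)$ would then follow by smoothing the building blocks of step functions. Given a half-open interval $[a,b)$, I would choose $\phi_n \in C_c^\infty(\R)$ with $0 \le \phi_n \le 1$, supported in a fixed bounded neighborhood of $[a,b]$, and with $\phi_n \to \1_{[a,b)}$ pointwise away from the two endpoints. Since $\mu_t$ is atomless the endpoints carry no mass, so $\phi_n \to \1_{[a,b)}$ holds $\mu_t$-almost everywhere; dominated convergence in $L^p(\mu_t)$, with dominating function the indicator of the fixed bounded neighborhood (which lies in $L^p(\mu_t)$ by local finiteness), gives $\phi_n \to \1_{[a,b)}$ in $L^p(\mu_t)$. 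Combining this with the density of step functions yields density of $C_c^\infty(\R)$.

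Translation invariance is the main obstacle, because it fails for general atomless $\sigma$-finite Borel measures (a Cantor-type measure is a counterexample). The space $L^p(\R,\mu_t)$ is closed under the translations $\tau_a f = f(\cdot - a)$ precisely when $\mu_t$ is quasi-invariant under translation, and for a $\sigma$-finite Borel measure on $\R$ this forces $\mu_t$ to be mutually absolutely continuous with Lebesgue measure. I would therefore prove translation invariance by exhibiting the Lebesgue density of $\mu_t$: unwinding \eqref{def:mu} through the operator representation and the uncorrelated-scattering relations \eqref{def:US} and \eqref{US_op} shows that $d\mu_t(u)$ equals a power--delay density times $du$, and closure under translation follows once that density is locally bounded above and bounded away from $0$ on the delay set carrying the scattering. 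Pinning down this density, and the non-degeneracy it requires, is where the real difficulty lies; the remaining regularity facts are routine.
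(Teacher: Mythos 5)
The paper offers no proof of this lemma at all: it is stated explicitly ``without proof,'' justified only by the remark that $\mu_t$ is a continuous $\sigma$-finite Borel measure and therefore ``enjoys similar properties as the Lebesgue measure.'' So you are filling a gap rather than reproducing an argument, and your treatment of the three regularity assertions is correct and standard: local finiteness of $\mu_t$ (from \eqref{def:mu} and \eqref{Cont_mean2_op}) gives the containment of bounded compactly supported Borel functions; Carath\'eodory extension of the premeasure of Lemma~\ref{lemma:mu} plus approximation of finite-measure Borel sets by finite unions of half-open intervals gives density of step functions; and smoothing the indicators, using atomlessness to handle the endpoints, gives density of $C_c^{\infty}(\R)$.

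The gap is in the translation-invariance part, and it is twofold. First, your proposed route --- unwinding \eqref{def:mu} to exhibit a ``power--delay density'' $w$ with $d\mu_t(u)=w(u)\,du$ --- cannot be carried out, because the standing hypotheses used to build $\mu_t$ are only the weak-US property \eqref{US_op} and mean-square continuity \eqref{Cont_mean2_op}; the densities in \eqref{def:WSSUS} and \eqref{def:US} belong to the introduction's informal literature review and are not assumptions in force here. These hypotheses do not imply $\mu_t\ll du$: take $X(t,u)=B_{F(u)}$ with $B$ a standard Brownian motion and $F$ the Cantor function. This process has independent, mean-zero increments and is mean-square continuous, so it satisfies \eqref{US_X} and \eqref{Cont_mean2_X}, yet $\mu_t$ is the Cantor measure, singular with respect to Lebesgue measure --- exactly the counterexample class you yourself invoke. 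Second, even when a density exists, your stated sufficient condition (locally bounded above and bounded away from zero) does not give translation invariance: $w(x)=e^{x^2}$ is smooth and $\geq 1$, but $f$ with $f^p=e^{-x^2-|x|}$ lies in $L^p(e^{x^2}dx)$ while $\int f(x-a)^p e^{x^2}dx = e^{a^2}\int e^{2ax-|x|}dx=\infty$ for $|a|\geq 1/2$. What is actually needed is a moderateness bound of the form $\mu_t(B+a)\leq C_a\,\mu_t(B)$ for all Borel $B$ and every shift $a$, uniformly in $B$.

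The constructive conclusion: your instinct that translation invariance is the only non-routine claim is correct, but the honest resolution is not a cleverer derivation --- it is that this part of the lemma is not a theorem under the paper's hypotheses and must be treated as an additional assumption on the channel (or the lemma's statement weakened, e.g.\ to the trivial fact that translates of bounded compactly supported functions remain in $L^p(\R,\mu_t)$). Note that the strong form is genuinely used later, in proving $L^2(\R,\mu_t)=L^2(\R,\mu)$ and \eqref{eq:WSSUS_H_isom}, so the issue propagates. The paper's own one-line justification commits precisely the inference you correctly reject: being an atomless $\sigma$-finite Borel measure does not confer Lebesgue-like translation behavior.
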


Now we can define the stochastic integral with respect to $X$, in very much the same way the Ito integral is defined (e.g., see \cite{Evans12,Oksendal}). Let $t\in\R$ be fixed. First, let $$\phi = \sum_{k=1}^n c_k \1_{[u_{k-1},u_k)}$$ be a step function in $L^2(\R,\mu_t)$, and define $$I_t(\phi) = \sum_{k=1}^n c_k ( X(t,u_k) - X(t,u_{k-1}) ).$$ Then, by \eqref{US_X}, %and \eqref{WSS_X},
\begin{eqnarray*}
\Exp{|I_t(\phi)|^2} 
&=& \sum_{k,l=1}^n c_k \overline{c_l} \Exp{ ( X(t,u_k) - X(t,u_{k-1}) )( X(t,u_l) - X(t,u_{l-1}) )  } \\
&=& \sum_{k=1}^n |c_k|^2 \Exp{ |X(t,u_k) - X(t,u_{k-1})|^2 } \\
&=& \sum_{k=1}^n |c_k|^2 \mu_t([u_{k-1},u_k)) \\
&=& \int_{\R} |\phi|^2\ d\mu_t.
\end{eqnarray*}
In other words, if $(\Omega,P)$ is the underlying probability space, then $\|I_t(\phi)\|_{L^2(\Omega,P)} = \|\phi\|_{L^2(\R,\mu_t)}$, where $L^2(\Omega,P)$ is the set of all random variables $Z$ on the probability space, for which $\|Z\|_{L^2(\Omega,P)} = \Exp{|Z|^2} <\infty$. Thus, by Lemma~\ref{lemma:L2_mu}, $I_t:L^2(\R,\mu_t) \to L^2(\Omega,P)$ is a linear isometry, defined on a dense subset of $L^2(\R,\mu_t)$. Consequently, it has a unique extension to all of $L^2(\R,\mu_t)$.

\begin{notation}
For all $f\in L^2(\R,\mu_t)$, we use the customary notation 
\begin{eqnarray}\label{def:I_t(f)}
I_t(f) := \int f(u) X(t,du)
\end{eqnarray}
\end{notation}

\begin{remark}\normalfont
We defined $I_t(f)$ in \eqref{def:I_t(f)} for deterministic $f$. We could have defined it for a class of predictable random functions in almost the same way. 
\end{remark}

$I_t(f)$ that we have just defined is nothing but $Hf(t)$. In fact, if $f = \1_{[a,b)}$, then $Hf(t) = X(t,b) - X(t,a) = I_t(f)$ by definition. Since $H$ is a linear operator, $H$ is defined on step functions. If $f = \sum_{k=1}^n c_k \1_{[u_{k-1},u_k)}$ is a step function, then $$Hf(t) = \sum_{k=1}^n c_k ( X(t,u_k) - X(t,u_{k-1}) ) = I_t(f).$$ Moreover, $Hf(t)$ is uniquely defined for $f\in L^2(\R,\mu_t)$ by $Hf(t) = I_t(f)$. As an operator, $H$ is defined on $\bigcap_{t\in\R} L^2(\R,\mu_t)$, which is a locally convex space with (semi)norms given by $\mu_t$.

We summarize some of our results in the following theorems. We have just proven Theorem~\ref{thm:SIO} and Theorem~\ref{thm:SIO_2}.

\begin{theorem}\label{thm:SIO}
$H$ is a random linear operator, which is defined on step functions, and satisfies \eqref{US_op} and \eqref{Cont_mean2_op} if and only if, for each $t\in\R$, there exist a stochastic process $\{ X(t,u): u\in\R\}$, which is continuous in mean-squared and has uncorrelated increments such that
\begin{equation}\label{SIO_representation}
Hf(t) = \int f(u) X(t,du)
\end{equation}
for every step function $f$. 
\end{theorem}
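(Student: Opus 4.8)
The plan is to prove the two directions of the equivalence separately, drawing on the construction and the lemmas already established in this subsection. For the forward implication I would start from a random linear operator $H$ defined on step functions and satisfying \eqref{US_op} and \eqref{Cont_mean2_op}, and exhibit the process $\{X(t,u)\}$ by the explicit three-case formula given above in terms of $H\1_{[0,u)}(t)$ and $H\1_{[u,0)}(t)$. The two properties demanded of $X$ must then be read off from the two hypotheses on $H$. Mean-square continuity \eqref{Cont_mean2_X} is immediate from \eqref{Cont_mean2_op} once one observes that an increment $X(t,b)-X(t,a)$ equals $H\1_{[a,b)}(t)$ (up to the sign bookkeeping forced by the definition of $X$).

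The increment property \eqref{US_X} is the first place that requires attention: for $u_1>u_2\geq u_3>u_4$ the increments $X(t,u_1)-X(t,u_2)$ and $X(t,u_3)-X(t,u_4)$ are the images under $H$ of $\1_{[u_2,u_1)}$ and $\1_{[u_4,u_3)}$, whose supports are disjoint precisely because $u_2\geq u_3$, so \eqref{US_op} forces the cross expectation to vanish. Having produced $X$, the representation \eqref{SIO_representation} follows by linearity: for a single indicator $f=\1_{[a,b)}$ one has $Hf(t)=X(t,b)-X(t,a)=I_t(f)$ by the definitions of $X$ and of the integral, and extending linearly over a finite combination of indicators identifies $Hf(t)$ with $I_t(f)=\int f(u)X(t,du)$ on all step functions.

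For the converse I would assume the process $X(t,\cdot)$ with increments satisfying \eqref{US_X} and mean-square continuity \eqref{Cont_mean2_X} is given, and define $H$ through \eqref{SIO_representation}; on step functions this is a finite sum of increments, so $H$ is manifestly a well-defined random linear operator. To recover \eqref{Cont_mean2_op} I would specialize to $f=\1_{[a,b)}$, giving $\Exp{|H\1_{[a,b)}(t)|^2}=\Exp{|X(t,b)-X(t,a)|^2}$, which tends to $0$ as $|b-a|\to 0$ by mean-square continuity. To recover \eqref{US_op} for step functions $f,g$ with disjoint supports, I would expand both $Hf(t)$ and $Hg(t)$ as sums of increments over intervals drawn from the two disjoint support sets; every cross term is then the expectation of a product of increments over disjoint intervals, which vanishes by \eqref{US_X}, so $\Exp{Hf(t)Hg(t)}=0$.

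The main obstacle I anticipate is not any single estimate but the careful bookkeeping that makes the increment identity $X(t,v)-X(t,u)=H\1_{[u,v)}(t)$ hold uniformly in the signs of $u$ and $v$, since the definition of $X$ splits into three cases; one must check the mixed-sign case ($u<0\leq v$) separately and confirm that both \eqref{US_X} and the isometry $\Exp{|I_t(\phi)|^2}=\|\phi\|_{L^2(\R,\mu_t)}^2$ survive this splitting. Once that identity is in hand, both directions reduce to the linearity of $H$ and of the stochastic integral $I_t$, together with the premeasure and isometry machinery of Lemma~\ref{lemma:mu} and Lemma~\ref{lemma:L2_mu}, which may be invoked without reproof.
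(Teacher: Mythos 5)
Your proposal is correct and follows essentially the same route as the paper: construct $X(t,u)$ from $H$ via the three-case indicator formula, read off \eqref{US_X} and \eqref{Cont_mean2_X} from \eqref{US_op} and \eqref{Cont_mean2_op} (using disjointness of $[u_4,u_3)$ and $[u_2,u_1)$ when $u_2\geq u_3$), identify $Hf(t)$ with $I_t(f)$ on step functions by linearity, and reverse these computations for the converse. Your attention to the mixed-sign case $u<0\leq v$ in the increment identity $X(t,v)-X(t,u)=H\1_{[u,v)}(t)$ is exactly the bookkeeping the paper handles implicitly through linearity of $H$.
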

%\begin{proof}
%\end{proof}

\begin{theorem}\label{thm:SIO_2}
Let $H$ be a random linear operator as in \eqref{SIO_representation}, and $\mu_t$ be the measure as in Lemma~\ref{lemma:mu} and Lemma~\ref{lemma:L2_mu}. Then, $Hf(t)$ is well-defined for every $f\in L^2(\R,\mu_t)$ and satisfies $$\Exp{|Hf(t)|^2} = \int_{\R}|f(u)|^2 \mu_t(du).$$  
\end{theorem}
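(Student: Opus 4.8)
The plan is to extend the isometry already verified for step functions to all of $L^2(\R,\mu_t)$ by the standard density-and-completeness argument underlying the construction of the It\^o integral. Recall from the computation preceding the statement that, for any step function $\phi=\sum_k c_k\1_{[u_{k-1},u_k)}$ in $L^2(\R,\mu_t)$, the uncorrelated-increments relation \eqref{US_X} yields $\Exp{|I_t(\phi)|^2}=\int_\R|\phi|^2\,d\mu_t$, i.e.\ $\|I_t(\phi)\|_{L^2(\Omega,P)}=\|\phi\|_{L^2(\R,\mu_t)}$, and that $I_t(\phi)=H\phi(t)$ because $H$ is linear on step functions. Thus $I_t$ is a linear isometry defined on the subspace of step functions.

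First I would fix $f\in L^2(\R,\mu_t)$ and use the density of step functions granted by Lemma~\ref{lemma:L2_mu} to pick a sequence $(\phi_n)_{n\in\N}$ of step functions with $\|\phi_n-f\|_{L^2(\R,\mu_t)}\to 0$. By linearity and the isometry on step functions, $\|I_t(\phi_n)-I_t(\phi_m)\|_{L^2(\Omega,P)}=\|\phi_n-\phi_m\|_{L^2(\R,\mu_t)}$, so $(I_t(\phi_n))$ is Cauchy in $L^2(\Omega,P)$. Since $L^2(\Omega,P)$ is complete, this sequence converges, and I would define $Hf(t):=I_t(f)$ to be its limit. Well-definedness follows by comparing two approximating sequences: if $\psi_n\to f$ as well, then $\|I_t(\phi_n)-I_t(\psi_n)\|_{L^2(\Omega,P)}=\|\phi_n-\psi_n\|_{L^2(\R,\mu_t)}\to 0$, so both limits coincide in $L^2(\Omega,P)$.

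Finally the claimed identity follows by passing the isometry to the limit using continuity of the norm: $\Exp{|Hf(t)|^2}=\lim_n\|I_t(\phi_n)\|_{L^2(\Omega,P)}^2=\lim_n\|\phi_n\|_{L^2(\R,\mu_t)}^2=\|f\|_{L^2(\R,\mu_t)}^2=\int_\R|f(u)|^2\,\mu_t(du)$. The whole argument is just the bounded-linear-transformation theorem applied to an isometry, so no step is a genuine obstacle; the only facts requiring care are the completeness of $L^2(\Omega,P)$, which guarantees the limit exists, and the density supplied by Lemma~\ref{lemma:L2_mu}, which guarantees every $f$ is approximable. Both being in place, the extension and the verification of the isometry are routine.
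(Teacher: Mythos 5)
Your proposal is correct and follows essentially the same route as the paper: the paper also establishes the isometry $\|I_t(\phi)\|_{L^2(\Omega,P)}=\|\phi\|_{L^2(\R,\mu_t)}$ on step functions via \eqref{US_X}, identifies $I_t(\phi)$ with $H\phi(t)$ by linearity, and then invokes the density of step functions (Lemma~\ref{lemma:L2_mu}) to extend the isometry uniquely to all of $L^2(\R,\mu_t)$. You merely spell out the Cauchy-sequence and well-definedness details of that unique extension, which the paper leaves implicit.
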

%\begin{proof}
%\end{proof}

We call $X(t,u)$ the integrated kernel, and $X(t,du)$ the kernel symbol of the operator $H$ given in \eqref{SIO_representation}. When $H$ satisfies \eqref{US_op}, we have $$\Exp{\iint f(u)g(v) X(t,du) X(t,dv) } = \int f(u)g(u)\ \mu_t(du) $$ We use the short notation 
\begin{eqnarray}\label{eq:short_X}
\Exp{X(t,du) X(t,dv)} = \mu_t(du) \delta(u-v).
\end{eqnarray}

\subsection{Impulse response}\label{sec:SIO_rep_Y}

\begin{proposition}\label{prop:Y_int}
Let $Y(t,u) = X(t,t)-X(t,t-u)$. For each $t$ fixed, the process $\{Y(t,u):u\in\R\}$ satisfies the following.
\begin{enumerate}
\item $\Exp{Y(t,u)} = 0$.

\item $\{Y(t,u):u\in\R\}$ has uncorrelated increments and continuous in mean-squared. 

\item For any step function $\phi = \sum_{k=0}^{n-1} c_k\1_{[u_k,u_{k+1})}$
$$\Exp{|\sum_{k=0}^{n-1} c_k ( Y(t,u_{k+1}) - Y(t,u_k) ) |^2} = \int |\phi(t-u)|^2\ \mu_t(du) $$

\item For any $f\in L^2(\R,\mu_t)$, both integrals below exist and are equal a.s.
\begin{eqnarray}\label{eq:Y_int}
\int f(u) Y(t,du) = \int f(t-u) X(t,du).
\end{eqnarray}
\end{enumerate}
\end{proposition}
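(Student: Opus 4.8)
The plan is to establish each of the four claims by pushing the change of variable $Y(t,u)=X(t,t)-X(t,t-u)$ through the properties of $X$ that were proven in Section~\ref{sec:SIO_rep_X}, and then reducing the stochastic integral against $Y$ to a stochastic integral against $X$. Claim (1) is immediate: since $\Exp{X(t,u)}=0$ for every $u$, linearity of expectation gives $\Exp{Y(t,u)}=\Exp{X(t,t)}-\Exp{X(t,t-u)}=0$. For claim (2), the key observation is that an increment of $Y$ is, up to sign, an increment of $X$: for $u<v$ we have
\begin{eqnarray*}
Y(t,v)-Y(t,u) = \bigl(X(t,t)-X(t,t-v)\bigr)-\bigl(X(t,t)-X(t,t-u)\bigr) = X(t,t-u)-X(t,t-v).
\end{eqnarray*}
Because $u<v$ implies $t-v<t-u$, this is precisely an increment of $X(t,\cdot)$ over the interval $[t-v,t-u)$. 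Uncorrelated increments of $Y$ then follow directly from \eqref{US_X} applied to the corresponding (disjoint, correctly ordered) increments of $X$, and mean-squared continuity of $Y(t,\cdot)$ follows from \eqref{Cont_mean2_X} since $|b-a|\to 0$ forces $|(t-a)-(t-b)|\to 0$.

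For claim (3) I would compute directly. Writing out the increments of $Y$ as increments of $X$ as above, the sum $\sum_{k=0}^{n-1} c_k(Y(t,u_{k+1})-Y(t,u_k))$ becomes $\sum_{k=0}^{n-1} c_k(X(t,t-u_k)-X(t,t-u_{k+1}))$. Expanding the squared modulus and taking expectation, the cross terms vanish by the uncorrelated-increments property just established, leaving $\sum_{k=0}^{n-1}|c_k|^2\,\mu_t([t-u_{k+1},t-u_k))$. The final step here is to recognize this sum as $\int|\phi(t-u)|^2\,\mu_t(du)$: the function $\phi(t-u)$ is itself a step function, equal to $c_k$ precisely on the interval $u\in[t-u_{k+1},t-u_k)$, so integrating its squared modulus against $\mu_t$ reproduces exactly these terms. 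This computation mirrors the isometry calculation already carried out for $I_t(\phi)$, and indeed it shows that the map sending a step function $\phi$ to $\sum_k c_k(Y(t,u_{k+1})-Y(t,u_k))$ is an isometry from $L^2(\R,\mu_t)$ (with the norm seen through the reflection $u\mapsto t-u$) into $L^2(\Omega,P)$.

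For claim (4), the natural route is to define $\int f(u)\,Y(t,du)$ by the same isometric-extension procedure used for $I_t$, using claim (3) as the underlying isometry, and then to prove \eqref{eq:Y_int} first on step functions and pass to the limit. On a step function $\phi=\sum_k c_k\1_{[u_k,u_{k+1})}$, the left side is $\sum_k c_k(Y(t,u_{k+1})-Y(t,u_k))=\sum_k c_k(X(t,t-u_k)-X(t,t-u_{k+1}))$, while the right side is $\int\phi(t-u)\,X(t,du)=I_t(\phi(t-\cdot))$; since $\phi(t-\cdot)$ is the step function taking value $c_k$ on $[t-u_{k+1},t-u_k)$, its $I_t$-image is exactly that same sum, so the two sides agree identically on step functions. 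By Lemma~\ref{lemma:L2_mu} step functions are dense in $L^2(\R,\mu_t)$, and both sides are $L^2(\Omega,P)$-isometric in $f$ (the left by claim (3), the right by Theorem~\ref{thm:SIO_2} applied to the reflected integrand), so both extend continuously and the equality \eqref{eq:Y_int} persists for all $f\in L^2(\R,\mu_t)$; the a.s.\ statement comes from the $L^2$-convergence of the approximating sums along a subsequence. The main obstacle I anticipate is bookkeeping rather than conceptual: one must handle the order reversal $u\mapsto t-u$ carefully so that increments of $X$ are always taken over correctly-ordered intervals (as \eqref{US_X} requires $u_1>u_2\geq u_3>u_4$), and one must verify that the reflection $\phi\mapsto\phi(t-\cdot)$ preserves $L^2(\R,\mu_t)$-membership and norm, which relies on the translation invariance asserted in Lemma~\ref{lemma:L2_mu} together with the fact that $\mu_t$ behaves like Lebesgue measure under reflection.
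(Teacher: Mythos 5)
Your proposal is correct and follows essentially the same route as the paper's own proof: rewrite increments of $Y(t,\cdot)$ as (order-reversed) increments of $X(t,\cdot)$, verify the isometry $\Exp{|\sum_k c_k(Y(t,u_{k+1})-Y(t,u_k))|^2}=\int|\phi(t-u)|^2\,\mu_t(du)$ on step functions, extend by density, and pass the identity \eqref{eq:Y_int} from step functions to all of $L^2(\R,\mu_t)$ by an $L^2(\Omega,P)$ limiting argument. The only slip is that $\phi(t-\cdot)$ takes the value $c_k$ on the left-open interval $(t-u_{k+1},t-u_k]$ rather than $[t-u_{k+1},t-u_k)$, which is harmless since $\mu_t$ has no atoms, and the reflection-invariance caveat you raise at the end is a genuine gap in the paper's own Lemma~\ref{lemma:L2_mu} rather than in your argument.
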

\begin{proof}
First and second parts are immediate by definition, since $X$ has zero mean, uncorrelated increments and continuous in mean-squared. 

Third, $$\phi(u) = \sum_{k=0}^{n-1} c_k\1_{[u_k,u_{k+1})}(u)
\Rightarrow \phi(t-u) = \sum_{k=0}^{n-1} c_{n-k-1} \1_{(\tilde{u}_{k},\tilde{u}_{k+1}]}(u).$$ Since $Y$ has independent increments,
\begin{eqnarray*}
&&\Exp{|\sum_{k=0}^{n-1} c_k ( Y(t,u_{k+1}) - Y(t,u_k) ) |^2} \\
&=& \sum_{k=0}^{n-1} \sum_{l=0}^{n-1} c_kc_l \Exp{( Y(t,u_{k+1}) - Y(t,u_k) )( Y(t,u_{l+1}) - Y(t,u_l) )} \\
&=& \sum_{k=0}^{n-1} |c_k|^2 \Exp{( X(t,t-u_{k+1}) - X(t,t-u_k) )^2} \\
&=& \sum_{k=0}^{n-1} |c_{n-k-1}|^2 \Exp{( X(t,\tilde{u}_{k+1}) - X(t,\tilde{u}_k) )^2} \\
&=& \sum_{k=0}^{n-1} |c_{n-k-1}|^2 \mu_t([\tilde{u}_k,\tilde{u}_{k+1})) \\
&=& \int |\phi(t-u)|^2 \ \mu_t(du)
\end{eqnarray*}
where $u_{n-k} = t - \tilde{u}_{k}$. Just as in the definition of the integral in \eqref{def:I_t(f)}, we have a linear isometry, which extends to $L^2(\R,\mu_t)$.

Fourth, for the step function $\phi$, we have a.s.
\begin{eqnarray*}
\int \phi(u)Y(t,du)
&=& \sum_{k=0}^{n-1} c_k ( Y(t,u_{k+1}) - Y(t,u_k) ) \\
&=& \sum_{k=0}^{n-1} c_{n-k-1} ( X(t,\tilde{u}_{k+1}) - X(t,\tilde{u}_k) ) \\
&=& \int \phi(t-u)X(t,du)
\end{eqnarray*}
Next, let $f\in L^2(\R,\mu_t)$. For every $\epsilon>0$, there exists a step function $\phi$ such that
$$\int|f(t-u) - \phi(t-u)|^2\ \mu_t(du) < \epsilon^2.$$ Then,
\begin{eqnarray*}%\label{eq:}
&&\Exp{ | \int f(t-u)X(t,du) - \int f(u)Y(t,du)  |^2}^{1/2} \\
&&\hspace{20mm} < \Exp{ |  \int f(u)Y(t,du) - \int \phi(u)Y(t,du)|^2 }^{1/2}  \\
&&\hspace{35mm} + \Exp{ |  \int \phi(t-u)X(t,du) - \int \phi(u)Y(t,du)|^2 }^{1/2}  \\
&&\hspace{35mm} + \Exp{ |  \int f(t-u)X(t,du) - \int \phi(t-u)X(t,du)|^2 }^{1/2} \\
&&\hspace{20mm} = 2 \left(\int|f(t-u) - \phi(t-u)|^2\ \mu_t(du)\right)^{1/2} \\
&&\hspace{20mm} < 2\epsilon.
\end{eqnarray*}
As $\epsilon\to 0$, we obtain the identity \eqref{eq:Y_int}.
\end{proof}

\subsection{Kohn-Nirenberg symbol}\label{sec:SIO_rep_sigma}

\begin{proposition}\label{prop:sigma}
For each $T>0$, let 
\begin{eqnarray}\label{def:sigma}
\sigma_T(t,\xi) = \int_{-T}^T e^{-2\pi iu\xi} Y(t,du)
\end{eqnarray}
Then, the process $\sigma_T$ satisfies the following.
\begin{enumerate}
\item $\Exp{\sigma_T(t,\xi)}=0.$
\item For $t,\xi_1,\xi_2\in\R$ and $T_1>T_2\geq T_3>T_4$, 
$$\Exp{(\sigma_{T_1}(t,\xi_1) - \sigma_{T_2}(t,\xi_1))
(\overline{\sigma_{T_3}(t,\xi_2)} - \overline{\sigma_{T_4}(t,\xi_2)}) } = 0.$$
\item For $t,\xi\in\R$ and $T>S$, 
$$\Exp{|\sigma_{T}(t,\xi) - \sigma_{S}(t,\xi)|^2 } = \mu_t(\{t+u:S<|u|<T\}).$$ In particular, $$\lim_{|S-T|\to 0} \Exp{|\sigma_{T}(t,\xi) - \sigma_{S}(t,\xi)|^2 } = 0.$$
\end{enumerate}
\end{proposition}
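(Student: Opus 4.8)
The plan is to view each $\sigma_T(t,\xi)$ as a stochastic integral against $Y$. Writing $g_{T,\xi}(u) = e^{-2\pi iu\xi}\1_{[-T,T)}(u)$, we have $\sigma_T(t,\xi) = \int g_{T,\xi}(u)\,Y(t,du)$, and $g_{T,\xi}$ is a bounded Borel function with compact support, hence an element of $L^2(\R,\mu_t)$ by Lemma~\ref{lemma:L2_mu}. Thus $\sigma_T(t,\xi)$ is well defined through the isometric extension of the $Y$-integral constructed in Proposition~\ref{prop:Y_int}, and all three claims become assertions about this integral that I would read off from the isometry of Proposition~\ref{prop:Y_int}(3) together with the uncorrelated increments of $Y$. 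For the first part, I would approximate $g_{T,\xi}$ in $L^2(\R,\mu_t)$ by step functions $\phi_n$; each $\int\phi_n\,Y(t,du)$ is a finite combination of increments of $Y$, all of zero mean by Proposition~\ref{prop:Y_int}(1), so $\Exp{\int\phi_n\,Y(t,du)}=0$. Since $\int\phi_n\,Y(t,du)\to\sigma_T(t,\xi)$ in $L^2(\Omega,P)$ and $L^2$-convergence forces $L^1$-convergence on a probability space, letting $n\to\infty$ gives $\Exp{\sigma_T(t,\xi)}=0$.

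For the second part I would express the two increments as integrals over disjoint sets. Since $\sigma_{T_1}(t,\xi_1)-\sigma_{T_2}(t,\xi_1)=\int (g_{T_1,\xi_1}-g_{T_2,\xi_1})\,Y(t,du)$ with $g_{T_1,\xi_1}-g_{T_2,\xi_1}$ supported on $A_1=\{u:T_2\le |u|<T_1\}$, while the second factor is analogously supported on $A_2=\{u:T_4\le |u|<T_3\}$, the ordering $T_1>T_2\ge T_3>T_4$ forces $A_1\cap A_2=\emptyset$. To finish I would polarize the isometry of Proposition~\ref{prop:Y_int}(3): because $f\mapsto\int f\,Y(t,du)$ is a complex-linear isometry of $L^2(\R,\mu_t)$ into the complex Hilbert space $L^2(\Omega,P)$, it preserves inner products, so $\Exp{\left(\int f\,Y(t,du)\right)\overline{\left(\int g\,Y(t,du)\right)}}=\int f(t-u)\overline{g(t-u)}\,\mu_t(du)$. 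Taking $f$ supported on $A_1$ and $g$ supported on $A_2$ makes the integrand vanish identically, so the expectation is $0$. One may equally verify this vanishing directly on step functions from the uncorrelated (and real-valued) increments and then pass to the limit, bypassing the abstract polarization.

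For the third part I would apply the isometry of Proposition~\ref{prop:Y_int}(3) to $\phi=g_{T,\xi}-g_{S,\xi}$, obtaining $\Exp{|\sigma_T(t,\xi)-\sigma_S(t,\xi)|^2}=\int|\phi(t-u)|^2\,\mu_t(du)$. As $|e^{-2\pi i(\cdot)\xi}|=1$ and $\1_{[-T,T)}-\1_{[-S,S)}$ is the indicator of $\{S\le |u|<T\}$ up to $\mu_t$-null endpoints, the integrand collapses to $\1_{\{S\le |t-u|<T\}}$, whence the right-hand side equals $\mu_t(\{u:S\le |t-u|<T\})=\mu_t(\{t+u:S<|u|<T\})$, the half-open endpoints being adjusted freely because $\mu_t$ is non-atomic. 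The stated limit then follows since, as $T\downarrow S$, the sets $\{t+u:S<|u|<T\}$ decrease to $\emptyset$ inside a fixed bounded region on which $\mu_t$ is finite, so continuity from above gives $\mu_t(\{t+u:S<|u|<T\})\to 0$.

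The step I expect to demand the most care is the passage to complex-valued, merely bounded integrands: I must confirm that $g_{T,\xi}$ genuinely lies in the extended domain of the $Y$-integral (this is precisely Lemma~\ref{lemma:L2_mu}) and that the polarization invoked in the second part is legitimate, i.e.\ that the $Y$-integral is complex-linear and isometric so that Hermitian inner products, and not merely norms, are preserved. Everything beyond these points is a routine transcription of the isometry and of the uncorrelatedness of increments already established for $Y$.
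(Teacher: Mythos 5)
Your proof is correct and takes essentially the same route as the paper's: both reduce all three claims to the $L^2(\R,\mu_t)$ covariance identity for the stochastic integral and to the disjointness of the annuli $\{T_2\le|u|<T_1\}$ and $\{T_4\le|u|<T_3\}$. The only cosmetic difference is that you obtain the covariance formula by polarizing the $Y$-isometry of Proposition~\ref{prop:Y_int}(3) directly, whereas the paper first converts the $Y$-integrals to $X$-integrals via \eqref{eq:Y_int} and then applies \eqref{eq:short_X}; you also write out part 1 and the endpoint/non-atomicity details that the paper leaves implicit.
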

\begin{proof}
By \eqref{eq:Y_int} and \eqref{eq:short_X}, we get
\begin{eqnarray*}
&&\Exp{(\sigma_{T_1}(t,\xi_1) - \sigma_{T_2}(t,\xi_1))
(\overline{\sigma_{T_3}(t,\xi_2)} - \overline{\sigma_{T_4}(t,\xi_2)}) } \\
&=& \Exp{\iint \1_{[T_2,T_1)}(|u|) \1_{[T_4,T_3)}(|v|) e^{-2\pi i(v-u)\xi} Y(t,du)Y(t,dv)} \\
&=& \Exp{\iint \1_{[T_2,T_1)}(|t-u|) \1_{[T_4,T_3)}(|t-v|) e^{-2\pi i(v-u)\xi} X(t,du)X(t,dv)} \\
&=& \int \1_{[T_2,T_1)}(|t-u|) \1_{[T_4,T_3)}(|t-u|) \mu_t(du) \\
&=& 0.
\end{eqnarray*}
Similarly, 
\begin{eqnarray*}
\Exp{|\sigma_{T}(t,\xi) - \sigma_{S}(t,\xi)|^2 } 
= \int \1_{[S,T)}(|t-u|) \mu_t(du) 
= \mu_t(\{t+u:S<|u|<T\}).
\end{eqnarray*}
\end{proof}

\begin{lemma}\label{lemma:stochastic_Fubini}%Stochastic Fubini Theorem 
For any $t\in\R$, $T>0$ and $f\in C_c^{\infty}(\R)$, we have a.s.
\begin{eqnarray}\label{eq:stochastic_Fubini}
\int_{-T}^T f(t-u) Y(t,du) = \int e^{2\pi it\xi} \widehat{f}(\xi) \sigma_T(t,\xi) d\xi.
\end{eqnarray}
\end{lemma}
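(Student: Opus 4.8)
The plan is to read the claim as a stochastic Fubini statement and to prove it by establishing equality of both sides in $L^2(\Omega,P)$, which then forces the a.s.\ identity. First I would observe that, since $f\in C_c^\infty(\R)$, its Fourier transform $\widehat f$ is a Schwartz function, hence $\widehat f\in L^1(\R)$ and Fourier inversion applies. Writing $g(\xi)=e^{2\pi it\xi}\widehat f(\xi)$, the \emph{inner} $\xi$-integral on the right is
\begin{equation*}
\int g(\xi)e^{-2\pi iu\xi}\,d\xi=\int\widehat f(\xi)e^{2\pi i(t-u)\xi}\,d\xi=f(t-u),
\end{equation*}
so the asserted identity is precisely the interchange of the deterministic integral $\int\cdots\,d\xi$ with the stochastic integral $\int_{-T}^T\cdots\,Y(t,du)$.

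Next I would record that the right-hand side is well defined. By Proposition~\ref{prop:sigma}(3) (taking $S=0$), $\Exp{|\sigma_T(t,\xi)|^2}=\mu_t(\{t+u:0<|u|<T\})$ is a finite constant independent of $\xi$; together with $\widehat f\in L^1$ this shows that $\xi\mapsto g(\xi)\sigma_T(t,\xi)$ is Bochner integrable into $L^2(\Omega,P)$, so the right-hand side is the $L^2(\Omega,P)$-limit of Riemann sums
\begin{equation*}
S_{\mathcal P}=\sum_j g(\xi_j)\,\sigma_T(t,\xi_j)\,(\xi_{j+1}-\xi_j)
\end{equation*}
taken over partitions $\mathcal P$ of expanding intervals $[-M,M]$ with mesh tending to $0$.

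The heart of the argument is to push the finite sum through the stochastic integral. By the linearity of the $Y$-integral and the definition \eqref{def:sigma} of $\sigma_T$, one has $S_{\mathcal P}=\int_{-T}^T\psi_{\mathcal P}(u)\,Y(t,du)$ with $\psi_{\mathcal P}(u)=\sum_j g(\xi_j)e^{-2\pi iu\xi_j}(\xi_{j+1}-\xi_j)$, which is exactly the Riemann sum approximating $\int g(\xi)e^{-2\pi iu\xi}\,d\xi=f(t-u)$. Applying the isometry of Proposition~\ref{prop:Y_int}(3) to the bounded, compactly supported function $(\psi_{\mathcal P}(u)-f(t-u))\1_{[-T,T]}(u)$ gives
\begin{equation*}
\Exp{\Big|S_{\mathcal P}-\int_{-T}^T f(t-u)\,Y(t,du)\Big|^2}=\int_{[t-T,t+T]}|\psi_{\mathcal P}(t-u)-f(u)|^2\,\mu_t(du),
\end{equation*}
so it remains to show $\psi_{\mathcal P}(t-u)\to f(u)$ in $L^2([t-T,t+T],\mu_t)$. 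Since $\mu_t$ restricted to the compact set $[t-T,t+T]$ is finite (Lemma~\ref{lemma:L2_mu}), it suffices to prove $\psi_{\mathcal P}(w)\to f(t-w)$ uniformly for $w\in[-T,T]$.

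This last uniform convergence is the only genuinely delicate point, and it is where the regularity $f\in C_c^\infty(\R)$ is used. For fixed $w$, $\psi_{\mathcal P}(w)$ is a Riemann sum of $\xi\mapsto\widehat f(\xi)e^{2\pi i(t-w)\xi}$; the rapid decay of $\widehat f$ controls the truncation to $[-M,M]$, while the bound $|\partial_\xi(\widehat f(\xi)e^{2\pi i(t-w)\xi})|\le|\widehat f'(\xi)|+2\pi(|t|+T)|\widehat f(\xi)|$, integrable and uniform in $w\in[-T,T]$, controls the mesh error. Combining the two limits, $S_{\mathcal P}$ converges in $L^2(\Omega,P)$ simultaneously to the right-hand side (by Bochner integrability) and to the left-hand side (by the isometry estimate); hence the two sides agree in $L^2(\Omega,P)$ and therefore a.s., as claimed.
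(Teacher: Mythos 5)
Your proof is correct, but it takes a genuinely different route from the paper's. The paper proves the interchange by approximating the kernel $e^{2\pi iu\xi}\widehat{f}(\xi)$ uniformly on $[t-T,t+T]\times\R$ by finite tensor sums $\sum_{k}g_k(u)h_k(\xi)$ of step functions, with error dominated by $\epsilon/(1+\xi^2)$; for such tensor sums the two iterated integrals agree trivially, and the two error terms are controlled by the stochastic-integral isometry on one side and by Minkowski's integral inequality on the other. You instead interpret the right-hand side as a Bochner integral in $L^2(\Omega,P)$, discretize the $\xi$-integral by Riemann sums $S_{\mathcal P}$, observe that by linearity each $S_{\mathcal P}$ is the $Y$-integral of the trigonometric sum $\psi_{\mathcal P}$, and then use the isometry of Proposition~\ref{prop:Y_int} together with classical quantitative bounds (tail decay of $\widehat{f}$ plus a mesh estimate via $\|\widehat{f}'\|_{L^1}$) to show that $S_{\mathcal P}$ converges to the left-hand side; this reduces the stochastic Fubini statement to ordinary Riemann-sum convergence of the Fourier inversion integral, uniformly on compacts, which is a nice and more elementary mechanism. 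The one point you gloss over is the claim that $S_{\mathcal P}$ converges to the Bochner integral: norm integrability alone gives neither strong measurability nor Riemann-sum convergence; you also need continuity of $\xi\mapsto\sigma_T(t,\xi)$ as an $L^2(\Omega,P)$-valued map. This is easily supplied by the same isometry, since $\Exp{|\sigma_T(t,\xi_1)-\sigma_T(t,\xi_2)|^2}=\int_{[t-T,t+T]}|e^{-2\pi i(t-u)\xi_1}-e^{-2\pi i(t-u)\xi_2}|^2\,\mu_t(du)\to 0$ as $\xi_2\to\xi_1$ by dominated convergence ($\mu_t$ being finite on compacts), so with that one line added your argument is complete. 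In comparison, the paper's tensor-product trick treats the two iterated integrals symmetrically and never needs to make the meaning of the $\xi$-integral precise beyond Minkowski's inequality, while your version is more explicit about what the right-hand side actually is, at the cost of this extra continuity check.
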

\begin{proof}
\eqref{eq:stochastic_Fubini} is the same as
\begin{eqnarray*}
\int_{t-T}^{t+T} \int e^{2\pi iu\xi} \widehat{f}(\xi) d\xi X(t,du) = \int \int_{t-T}^{t+T} e^{2\pi iu\xi} \widehat{f}(\xi) X(t,du) d\xi,
\end{eqnarray*}
which is a stochastic Fubini theorem. First, notice that for each $\epsilon>0$, there are step functions $g_k,h_k$, $k=1\dots,n$ such that $$|e^{2\pi iu\xi}\widehat{f}(\xi) - \sum_{k=1}^n g_k(u)h_k(\xi)| \leq \frac{\epsilon}{1 + \xi^2}$$ for all $u\in[t-T,t+T]$ and all $\xi\in\R$. Thus, 
\begin{eqnarray*}
&&\Exp{\left|\int_{t-T}^{t+T} \int e^{2\pi iu\xi} \widehat{f}(\xi) - \sum_{k=1}^n g_k(u)h_k(\xi)\hspace{3mm}d\xi X(t,du) \right|^2} \\
&& \hspace{25mm} = \int_{t-T}^{t+T} \left|\int e^{2\pi iu\xi} \widehat{f}(\xi) - \sum_{k=1}^n g_k(u)h_k(\xi)\hspace{3mm}d\xi\right|^2 \mu_t(du) \\
&&\hspace{25mm} \leq \int_{t-T}^{t+T} \left(\int \frac{\epsilon}{1 + \xi^2} d\xi\right)^2 \mu_t(du) \\
&&\hspace{25mm} \leq \epsilon^2\pi^2\mu_t([t-T,t+T))
\end{eqnarray*}

Also, %by Minkowski integral inequality
\begin{eqnarray*}
&& \Exp{\left|\int \int_{t-T}^{t+T} e^{2\pi iu\xi} \widehat{f}(\xi) - \sum_{k=1}^n g_k(u)h_k(\xi)\hspace{3mm}X(t,du) d\xi \right|^2}^{1/2} \\
&&\hspace{25mm} \leq \int \Exp{\left|\int_{t-T}^{t+T} e^{2\pi iu\xi} \widehat{f}(\xi) - \sum_{k=1}^n g_k(u)h_k(\xi)\hspace{3mm}X(t,du) \right|^2}^{1/2} d\xi \\
&&\hspace{25mm} = \int \left(\int_{t-T}^{t+T} |e^{2\pi iu\xi} \widehat{f}(\xi) -\sum_{k=1}^n g_k(u)h_k(\xi)|^2 \mu_t(du)\right)^{1/2} d\xi \\
&&\hspace{25mm} \leq \int \sqrt{\mu_t([t-T,t+T)) } \frac{\epsilon}{1 + \xi^2}  d\xi \\
&&\hspace{25mm} = \epsilon\pi\sqrt{\mu_t([t-T,t+T)) }
\end{eqnarray*}

Together, we get
\begin{eqnarray*}
&&\Exp{\left|\int_{t-T}^{t+T} \int e^{2\pi iu\xi} \widehat{f}(\xi) d\xi X(t,du) 
- \int \int_{t-T}^{t+T} e^{2\pi iu\xi} \widehat{f}(\xi) X(t,du) d\xi\right|^2}^{1/2} \\
&&\hspace{15mm} \leq \Exp{\left|\int_{t-T}^{t+T} \int e^{2\pi iu\xi} \widehat{f}(\xi) - \sum_{k=1}^n g_k(u)h_k(\xi)\hspace{3mm}d\xi X(t,du) \right|^2}^{1/2} \\
&&\hspace{25mm} + \Exp{\left|\int \int_{t-T}^{t+T} e^{2\pi iu\xi} \widehat{f}(\xi) - \sum_{k=1}^n g_k(u)h_k(\xi)\hspace{3mm}X(t,du) d\xi \right|^2}^{1/2} \\
&&\hspace{15mm} \leq 2\epsilon\pi\sqrt{\mu_t([t-T,t+T)) }
\end{eqnarray*}
As $\epsilon\to 0$, we get the result.
\end{proof}

\vspace{3mm}

For a fixed $t\in\R$, \eqref{eq:stochastic_Fubini} gives the linear functional
\begin{eqnarray}\label{eq:sigma_int_0}
I_t(f) = \lim_{T\to\infty} \int e^{2\pi it\xi} \widehat{f}(\xi) \sigma_T(t,\xi) d\xi
\end{eqnarray} 
for all $f\in C_c^{\infty}(\R)$. We had defined $I_t(f)$ in \eqref{def:I_t(f)} and shown that $\Exp{|I_t(f)|^2} = \|f\|_{L^2(\R,\mu_t)}^2$ for every $f\in L^2(\R,\mu_t)$. On the other hand, $C_c^{\infty}(\R)$ is dense in $L^2(\R,\mu_t)$ by Lemma~\ref{lemma:L2_mu}. Thus, the functional in the right hand side of \eqref{eq:sigma_int_0} has a unique extension to $L^2(\R,\mu_t)$ and must be equal to $I_t(f)$.

We use the simpler notation $$\int e^{2\pi it\xi} \widehat{f}(\xi) \sigma(t,\xi) d\xi$$ when we want to emphasize the right hand size of \eqref{eq:sigma_int_0}, even though the integral in \eqref{eq:sigma_int_0} might not formally exist for all $f\in L^2(\R,\mu_t)$, or $(\sigma_T)_{T>0}$ might not converge weakly to a {\it function} $\sigma$ as $T\to\infty$. In this case, $\sigma$ shall just be a  symbol for this linear functional, which is the same as the {\it Kohn-Nirenberg symbol} of $H$.

With this notation in hand, we state the following theorem, which we just proved.

\begin{theorem}\label{thm:sigma_int}
For any $f\in L^2(\R,\mu_t)$, we have a.s.
\begin{eqnarray}\label{eq:sigma_int}
Hf(t) = \int e^{2\pi it\xi} \widehat{f}(\xi) \sigma(t,\xi) d\xi.
\end{eqnarray}
\end{theorem}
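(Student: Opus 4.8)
The plan is to prove the identity first on the dense class $f\in C_c^\infty(\R)$, where the right-hand side of \eqref{eq:sigma_int} is an honest limit of ordinary integrals, and then to extend to all of $L^2(\R,\mu_t)$ using the isometry and density already established. In effect I would assemble Lemma~\ref{lemma:stochastic_Fubini}, the reflection identity \eqref{eq:Y_int}, and Theorem~\ref{thm:SIO_2} into a single limiting statement, and then pass to the closure.

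First, fix $t\in\R$ and take $f\in C_c^\infty(\R)$ with $supp(f)\subseteq[-M,M]$. Applying \eqref{eq:Y_int} to the compactly supported bounded function $u\mapsto \1_{[-T,T]}(u)f(t-u)$ (which lies in $L^2(\R,\mu_t)$ by Lemma~\ref{lemma:L2_mu}) recasts the left-hand side of \eqref{eq:stochastic_Fubini} as $\int_{t-T}^{t+T} f(u)\,X(t,du)$. For $T>M+|t|$ the support of $f$ is contained in $[t-T,t+T]$, so this equals $\int f(u)\,X(t,du)=Hf(t)$ by Theorem~\ref{thm:SIO_2}; that is, the left-hand side of \eqref{eq:stochastic_Fubini} is eventually constant in $T$. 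Lemma~\ref{lemma:stochastic_Fubini} then forces $\int e^{2\pi it\xi}\widehat{f}(\xi)\sigma_T(t,\xi)\,d\xi\to Hf(t)$ as $T\to\infty$, which is precisely \eqref{eq:sigma_int_0}. Thus the theorem holds for $f\in C_c^\infty(\R)$, with the right-hand side read as the limit in $T$.

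Second, I would extend by density. By Theorem~\ref{thm:SIO_2} the map $f\mapsto Hf(t)=I_t(f)$ is a linear isometry of $L^2(\R,\mu_t)$ into $L^2(\Omega,P)$, hence bounded, and by Lemma~\ref{lemma:L2_mu} the space $C_c^\infty(\R)$ is dense in $L^2(\R,\mu_t)$. Consequently the linear functional $f\mapsto\lim_{T\to\infty}\int e^{2\pi it\xi}\widehat{f}(\xi)\sigma_T(t,\xi)\,d\xi$, which coincides with $I_t$ on the core $C_c^\infty(\R)$, admits a unique bounded extension to all of $L^2(\R,\mu_t)$, and that extension must be $I_t=Hf(t)$. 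Denoting this extended functional by the symbol $\int e^{2\pi it\xi}\widehat{f}(\xi)\sigma(t,\xi)\,d\xi$ yields \eqref{eq:sigma_int}.

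The delicate point is interpretive rather than computational: for general $f\in L^2(\R,\mu_t)$ the right-hand side of \eqref{eq:sigma_int} is not a literal integral, since the truncations $\sigma_T(t,\cdot)$ need not converge to any function $\sigma(t,\cdot)$ and $\widehat{f}$ need not be integrable against them. What the theorem really asserts is the existence of one continuous linear functional on $L^2(\R,\mu_t)$ that agrees with the $T$-limit on $C_c^\infty(\R)$, with $\sigma$ defined as its symbolic kernel. The argument therefore rests entirely on the boundedness and uniqueness supplied by the isometry of Theorem~\ref{thm:SIO_2}, not on any direct control of the oscillatory $\xi$-integral; that is the step I would be most careful to state cleanly.
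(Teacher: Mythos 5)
Your proposal is correct and follows essentially the same route as the paper: establish the identity on $C_c^{\infty}(\R)$ via Lemma~\ref{lemma:stochastic_Fubini} together with the reflection identity \eqref{eq:Y_int}, then extend by the isometry of $I_t$ and the density of $C_c^{\infty}(\R)$ in $L^2(\R,\mu_t)$, reading the right-hand side of \eqref{eq:sigma_int} as the symbol of the extended functional. Your explicit remarks that the truncated integral $\int_{t-T}^{t+T} f(u)\,X(t,du)$ is eventually constant in $T$ for compactly supported $f$, and that the theorem is an assertion about a continuous linear functional rather than a literal oscillatory integral, only make explicit what the paper leaves implicit.
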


\subsection{Spreading symbol}\label{sec:SIO_rep_eta}

If for each $u\in\R$ fixed, the sample paths of $\{Y(t,u):t\in\R\}$ are locally integrable a.s., then we define 
\begin{eqnarray}\label{def:eta}
\eta_T(u,\gamma) = \int_{-T}^T e^{-2\pi it\gamma} Y(t,u) dt
\end{eqnarray}
for any $T>0$.

\begin{theorem}\label{thm:eta_int}
For any $t\in\R$, $T>0$ and $f\in L^2(\R,\mu_t)$, we have a.s.
\begin{eqnarray}\label{eq:eta_int}
\1_{[-T,T]}(t) \int f(t-u) Y(t,du) = \iint e^{2\pi it\gamma} f(t-u) \eta_T(du,\gamma) d\gamma.
\end{eqnarray}
\end{theorem}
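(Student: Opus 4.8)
The plan is to read \eqref{eq:eta_int} as a stochastic Fubini identity that is collapsed by Fourier inversion in the time variable, in direct analogy with Lemma~\ref{lemma:stochastic_Fubini}. Substituting the definition \eqref{def:eta} of $\eta_T$ (and provisionally interchanging the inner $s$- and $u$-integrals), the right-hand side becomes
$$\int_\R e^{2\pi it\gamma}\left(\int_{-T}^T e^{-2\pi is\gamma}\int_\R f(t-u)\,Y(s,du)\,ds\right)d\gamma,$$
and the heuristic is transparent: moving the $\gamma$-integral innermost yields $\int_\R e^{2\pi i(t-s)\gamma}\,d\gamma=\delta(t-s)$, which collapses the $s$-integral over $[-T,T]$, replaces $s$ by $t$, and produces the factor $\1_{[-T,T]}(t)$, recovering the left-hand side. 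The whole task is thus to legitimize the interchanges of the stochastic $u$-integral against the deterministic $s$- and $\gamma$-integrals, and to give rigorous meaning to the Fourier inversion hidden in the $\delta$.

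First I would reduce to a convenient dense class: by Lemma~\ref{lemma:L2_mu} the space $C_c^\infty(\R)$ is dense in $L^2(\R,\mu_t)$, and by Proposition~\ref{prop:Y_int}(3), together with translation invariance of $\mu_t$, the map $f\mapsto\int f(t-u)\,Y(t,du)$ is an isometry $L^2(\R,\mu_t)\to L^2(\Omega,P)$; so it suffices to prove \eqref{eq:eta_int} for $f\in C_c^\infty(\R)$ and then extend. Because the $\gamma$-integral does not converge absolutely, I would truncate it to $[-R,R]$ and work on the compact box $[-T,T]\times[-R,R]$, where $e^{2\pi i(t-s)\gamma}$ is uniformly approximable by finite sums $\sum_k g_k(s)h_k(\gamma)$ of products of step functions, exactly as in the proof of Lemma~\ref{lemma:stochastic_Fubini}. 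For each separated term the interchange of $\int_{-R}^R d\gamma$, $\int_{-T}^T ds$ and the stochastic integral $\int\cdots Y(s,du)$ is trivial, and the approximation errors are controlled in $L^2(\Omega,P)$ by the isometries of Theorem~\ref{thm:SIO_2} and Proposition~\ref{prop:Y_int}(3), using that $Y$ has uncorrelated increments of total $\mu_t$-mass bounded on the compact lag-support of $f$. Letting the approximation error tend to $0$ gives, for each fixed $R$, the interchanged expression $\int_{-T}^T D_R(t-s)\left(\int_\R f(t-u)\,Y(s,du)\right)ds$ on the right, where $D_R(x)=\int_{-R}^R e^{2\pi ix\gamma}\,d\gamma=\sin(2\pi Rx)/(\pi x)$ is the Dirichlet kernel.

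The step I expect to be the main obstacle is the passage $R\to\infty$, i.e. the Fourier inversion itself. Writing $W(s):=\int_\R f(t-u)\,Y(s,du)$, I must show that $\int_{-T}^T D_R(t-s)\,W(s)\,ds$ converges as $R\to\infty$ to $\1_{[-T,T]}(t)\,W(t)$; this is an approximate-identity statement for the random field $s\mapsto W(s)$, requiring enough regularity (mean-square continuity, or local integrability inherited from the assumed a.s.\ local integrability of the paths $t\mapsto Y(t,u)$ that underlies the very definition \eqref{def:eta}) to invoke Dirichlet-kernel convergence at Lebesgue points, with the usual endpoint care at $s=\pm T$ that forces the closed indicator $\1_{[-T,T]}(t)$. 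A secondary difficulty is that the random measure $\eta_T(\cdot,\gamma)$ is \emph{not} an uncorrelated-increment integrator, so the inner integral $\int f(t-u)\,\eta_T(du,\gamma)$ only acquires meaning through the substitution above and an accompanying inner stochastic Fubini exchanging $\int_{-T}^T ds$ with $\int\cdots Y(s,du)$; checking that this is consistent with the It\^o-type integral of Proposition~\ref{prop:Y_int} and that the $R\to\infty$ limit commutes with the $L^2(\Omega,P)$ and approximation limits is where the genuine work concentrates. Once the inversion is established on the convergent truncations, reassembling the estimates and sending all parameters to their limits yields \eqref{eq:eta_int} almost surely.
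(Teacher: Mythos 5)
Your heuristic — Fourier inversion in the time variable collapsing the $s$-integral — is indeed the mechanism behind \eqref{eq:eta_int}, but your proposed route concentrates all the difficulty in a step that your own framework cannot complete, and which you explicitly leave open. After truncation you must show that $\int_{-T}^T D_R(t-s)\,W(s)\,ds \to \1_{[-T,T]}(t)\,W(t)$ almost surely for the \emph{given} $t$, where $W(s)=\int f(t-u)\,Y(s,du)$. For non-step $f$ (even $f\in C_c^\infty$), each $W(s)$ exists only as an $L^2(\Omega,P)$-limit; there is no canonical pathwise version of the field $s\mapsto W(s)$, no established joint measurability in $(s,\omega)$ — so the integral $\int_{-T}^T D_R(t-s)W(s)\,ds$ is not even well defined realization by realization — and no pointwise (Dini- or Lebesgue-point-type) regularity at $s=t$ of the kind Dirichlet-kernel convergence requires. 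The paper's only assumption in the time direction is a.s.\ local integrability of $s\mapsto Y(s,u)$ for fixed $u$, which attaches to increments of $Y$, not to $W$. The same defect already undermines the step you call ``trivial'': interchanging $\int_{-T}^T ds$ with the stochastic $u$-integral involves a family of \emph{different} integrators $Y(s,du)$ as $s$ varies, which is not the situation covered by Lemma~\ref{lemma:stochastic_Fubini} (there the integrator $X(t,du)$ is fixed and only deterministic integrands vary).

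The idea you are missing is the paper's reduction to step functions, which makes the entire computation pathwise and finite and dissolves every one of these issues. For $f=\sum_k c_k\1_{[u_k,u_{k+1})}$, the integral $\int f(t-u)\,\eta_T(du,\gamma)$ is literally the finite sum $\sum_k c_k\bigl(\eta_T(t-u_k,\gamma)-\eta_T(t-u_{k+1},\gamma)\bigr)$, and each increment unwinds, directly from the definition \eqref{def:eta}, into $\int_{-T}^T e^{-2\pi i x\gamma}\bigl(Y(x,t-u_k)-Y(x,t-u_{k+1})\bigr)\,dx$: a deterministic integral, for each realization, of a locally integrable function compactly supported in $x$. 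A single pathwise application of classical Fourier inversion then collapses the $\gamma$- and $x$-integrals to $\1_{[-T,T]}(t)\sum_k c_k\bigl(Y(t,t-u_k)-Y(t,t-u_{k+1})\bigr)$ — no stochastic Fubini, no $R\to\infty$ limit of a random field, no product-step approximation. The passage to general $f\in L^2(\R,\mu_t)$ is then not an analytic convergence statement at all: both sides are bounded linear maps into $L^2(\Omega,P)$ that agree on step functions, the left side being $\1_{[-T,T]}(t)I_t$, so the right side for general $f$ \emph{is} its unique continuous extension, and \eqref{eq:eta_int} holds by density. Your plan inverts this order — attempting hard analytic convergence for smooth $f$ first — and thereby demands regularity of a stochastic field exactly where the hypotheses of the paper are silent.
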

\begin{proof}
First, let $f=\sum_{k=0}^{n-1} c_k\1_{[u_k,u_{k+1})}$ be a step function. For each $t\in\R$,
\begin{eqnarray*}
\iint e^{2\pi it\gamma} f(t-u) \eta_T(du,\gamma) d\gamma
&=& \int e^{2\pi it\gamma} \sum_{k=0}^{n-1} c_k ( \eta_T(t-u_{k},\gamma)-\eta_T(t-u_{k+1},\gamma) ) d\gamma \\
&=& \int e^{2\pi it\gamma} \sum_{k=0}^{n-1} c_k \int_{-T}^{T} e^{-2\pi ix\gamma}  ( Y(x,t-u_{k})-Y(x,t-u_{k+1}) ) dx d\gamma \\
&=& \1_{[-T,T]}(t) \sum_{k=0}^{n-1} c_k  ( Y(t,t-u_{k})-Y(t,t-u_{k+1}) ) \\
&=& \1_{[-T,T]}(t) \int f(t-u) Y(t,du) 
\end{eqnarray*}
Thus, for a fixed $t\in\R$ and $T>0$, \eqref{eq:eta_int} gives the linear functional
\begin{eqnarray}\label{eq:eta_T_int}
I^T_t(f) = \1_{[-T,T]}(t)I_t(f) = \iint e^{2\pi it\gamma} f(t-u) \eta_T(du,\gamma) d\gamma
\end{eqnarray} 
for all step functions $f$. We had proven that $I_t$ has a unique extension to $L^2(\R,\mu_t)$, which satisfies $\Exp{|I_t(f)|^2} = \|f\|_{L^2(\R,\mu_t)}^2$. Consequently,  $I^T_t$ has a unique extension to $L^2(\R,\mu_t)$, which satisfies $\Exp{|I^T_t(f)|^2} = \1_{[-T,T]}(t)\|f\|_{L^2(\R,\mu_t)}^2$. Thus, we obtained \eqref{eq:eta_int}.
\end{proof}

\vspace{3mm}

We proceed similar to the previous subsection. For a fixed $t\in\R$, \eqref{eq:eta_int} gives the linear functional
\begin{eqnarray}\label{eq:eta_int_0}
I_t(f) = \lim_{T\to\infty} \iint e^{2\pi it\gamma} f(t-u) \eta_T(du,\gamma) d\gamma
\end{eqnarray} 
for all $f\in L^2(\R,\mu_t)$. In order to ease the notation, we write 
\begin{eqnarray}\label{eq:eta_int_1}
I_t(f) = \iint e^{2\pi it\gamma} f(t-u) \eta(du,\gamma) d\gamma
\end{eqnarray}
when we want to emphasize the right hand size of \eqref{eq:eta_int_0}, even though $(\eta_T)_{T>0}$ might not converge weakly to a {\it function} $\eta$ as $T\to\infty$. In this case, $\eta$ shall just be a symbol for the limiting linear functional $I_t$ and thus for the SIO $H$. $\eta$ is the {\it spreading symbol} for the SIO $H$.

\section{WSSUS property}\label{sec:WSSUS}

We have not considered what the US and WSSUS properties translate into for SIOs. If $H$ satisfies US assumption, then \eqref{def:US} and Proposition~\ref{prop:Y_int} together imply
$$
\int P(t,s,u) f(t-u) g(s-u)\ du
= \Exp{Hf(t) Hg(s)}
= \Exp{\iint f(t-u)g(s-v) Y(t,du) Y(s,dv) }
$$
In particular, when $f(u) = \1_{[a,b)}(t-u)$, $g(u) = \1_{[a,b)}(s-u)$, we have 
\begin{eqnarray}\label{eq:rho_US}
\int_{[a,b)\cap[c,d)} P(t,s,u) \ du = \Exp{ (Y(t,b)-Y(t,a))(Y(s,d)-Y(s,c)) }
\end{eqnarray}
If $H$ satisfies WSSUS assumption, we have the same result with $P(t,s,u)$ replaced by $P(t-s,u)$. 
Accordingly, we start with the definition of the {\it correlation measure} $\rho_{s,t}$.

\subsection{Correlation measure}

For any $t,s\in\R$, we define $\rho_{s,t}(\{a\}) = 0$ and
\begin{eqnarray}\label{def:rho}
\rho_{s,t}([a,b)) = \Exp{ (Y(t,b)-Y(t,a))(Y(s,b)-Y(s,a)) }
\end{eqnarray}
It is readily seen in \eqref{eq:rho_US} that, not only are the increments of $Y(t,.)$ uncorrelated, but also the increments of $Y(t,.)$ and $Y(s,.)$ are all uncorrelated, i.e., for $s,t\in\R$ and $u_1>u_2\geq u_3>u_4$
\begin{eqnarray}\label{def:rho_US}
\Exp{ (Y(t,u_1)-Y(t,u_2))(Y(s,u_3)-Y(s,u_4)) } = 0.
\end{eqnarray}
Notice that \eqref{def:rho} and \eqref{def:rho_US} together imply
\begin{eqnarray*}
\rho_{s,t}([a,b)\cap [c,d)) = \Exp{(Y(t,b)-Y(t,a))(Y(s,d)-Y(s,c))}
\end{eqnarray*}
Also, notice that $\rho_{t,t}([a,b)) = \mu_t((t-b,t-a])$.

If $u<v<w$, by \eqref{def:rho} and \eqref{def:rho_US} we have
\begin{eqnarray*}
\rho_{s,t}([u,w)) &=& \Exp{(Y(t,w) \pm Y(t,v)- Y(t,u))(Y(s,w) \pm Y(s,v) -Y(s,u))} \\
&=& \Exp{(Y(t,w) - Y(t,v))(Y(s,w) - Y(s,v)) } \\ 
&& + \Exp{(Y(t,w) - Y(t,v))(Y(s,v) -Y(s,u))} \\
&& + \Exp{(Y(t,v)- Y(t,u))(Y(s,w) - Y(s,v))} \\
&& + \Exp{(Y(t,v)- Y(t,u))(Y(s,v) -Y(s,u))} \\
&=& \rho_{s,t}([u,v)) + \rho_{s,t}([v,w)).
\end{eqnarray*}
Thus, $\rho_{s,t}$ is uniquely defined as an additive set function on the set algebra of the finite unions of intervals.
Similar to Lemma~\ref{lemma:mu}, we have the following.
\begin{lemma}\label{lem:rho}
For every $s,t\in\R$ and $T>0$, $\rho_{s,t}$ can be uniquely extended to a signed finite Borel measure on $[-T,T]$. Furthermore, for every Borel set $B\subseteq [-T,T]$, 
\begin{eqnarray}\label{eq:rho_corr_var}
|\rho_{s,t}(B)|^2 \leq \rho_{s,s}(B)\rho_{t,t}(B).
\end{eqnarray}
\end{lemma}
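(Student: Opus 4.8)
The plan is to treat the two assertions separately: the existence and uniqueness of the signed-measure extension I would obtain by a polarization argument that reduces everything to Lemma~\ref{lemma:mu}, while the correlation inequality \eqref{eq:rho_corr_var} I would derive from a Cauchy--Schwarz inequality in $L^2(\Omega,P)$ applied to the stochastic integrals constructed in Section~\ref{sec:SIO_rep_Y}.

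For the extension, fix $s,t\in\R$ and introduce the auxiliary processes $Z^{\pm}(u) = Y(t,u)\pm Y(s,u)$. Writing $A = Y(t,b)-Y(t,a)$ and $B = Y(s,b)-Y(s,a)$, the polarization identity $|A+B|^2 - |A-B|^2 = 4AB$ gives
\begin{eqnarray*}
\rho_{s,t}([a,b)) = \tfrac14\Big(\Exp{|Z^+(b)-Z^+(a)|^2} - \Exp{|Z^-(b)-Z^-(a)|^2}\Big).
\end{eqnarray*}
Each $Z^{\pm}$ has zero mean, is mean-square continuous (being a sum of two such processes), and has uncorrelated increments: expanding the product of increments over two disjoint intervals, the pure $Y(t,\cdot)$ and pure $Y(s,\cdot)$ terms vanish because those processes have uncorrelated increments, and the mixed terms vanish by \eqref{def:rho_US}. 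Hence Lemma~\ref{lemma:mu}, applied verbatim with $Z^{\pm}$ in place of $X$, produces finite positive Borel measures $\lambda^{\pm}$ on $[-T,T]$ with $\lambda^{\pm}([a,b)) = \Exp{|Z^{\pm}(b)-Z^{\pm}(a)|^2}$, finiteness being clear since $Z^{\pm}(\pm T)\in L^2(\Omega,P)$. Then $\rho_{s,t} = \tfrac14(\lambda^+-\lambda^-)$ on the algebra of finite unions of intervals, which is the required signed finite Borel measure. Uniqueness follows because the half-open intervals form a $\pi$-system generating the Borel $\sigma$-algebra of $[-T,T]$, so any two finite signed extensions agreeing on them coincide.

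For the inequality, I would first note that on a single interval it is immediate: $\rho_{s,t}([a,b)) = \Exp{AB}$, $\rho_{t,t}([a,b)) = \Exp{|A|^2}$, $\rho_{s,s}([a,b)) = \Exp{|B|^2}$, so Cauchy--Schwarz in $L^2(\Omega,P)$ gives $|\rho_{s,t}([a,b))|^2\leq \rho_{s,s}([a,b))\rho_{t,t}([a,b))$. To upgrade this to arbitrary Borel $B$, I would use the stochastic integrals $\Phi_t(f) = \int f(u)\,Y(t,du)$ and $\Phi_s(g) = \int g(u)\,Y(s,du)$ from Proposition~\ref{prop:Y_int}, which are isometries into $L^2(\Omega,P)$ with $\Exp{|\Phi_t(\1_B)|^2} = \rho_{t,t}(B)$ and $\Exp{|\Phi_s(\1_B)|^2} = \rho_{s,s}(B)$. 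On step functions, \eqref{def:rho} together with \eqref{def:rho_US} yield the bilinear identity $\Exp{\Phi_t(f)\Phi_s(g)} = \int fg\,d\rho_{s,t}$; since the polarization above gives $|\rho_{s,t}|\leq \tfrac12(\rho_{s,s}+\rho_{t,t})$, this bilinear form is continuous, so by density of step functions (Lemma~\ref{lemma:L2_mu}) the identity persists for all $f,g\in L^2$. Taking $f=g=\1_B$ and applying Cauchy--Schwarz once more,
\begin{eqnarray*}
|\rho_{s,t}(B)|^2 = |\Exp{\Phi_t(\1_B)\Phi_s(\1_B)}|^2 \leq \Exp{|\Phi_t(\1_B)|^2}\,\Exp{|\Phi_s(\1_B)|^2} = \rho_{s,s}(B)\rho_{t,t}(B).
\end{eqnarray*}

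The main obstacle I anticipate is not any single computation but making the passage from intervals to general Borel sets airtight in the inequality: one must verify that the cross-covariance identity extends off step functions, and for that the domination $|\rho_{s,t}|\leq\tfrac12(\rho_{s,s}+\rho_{t,t})$ is essential. Fortunately this domination drops out of the very polarization used for the extension, since $\lambda^++\lambda^- = 2(\rho_{s,s}+\rho_{t,t})$ and $|\rho_{s,t}|\leq\tfrac14(\lambda^++\lambda^-)$, so the two halves of the proof reinforce one another and no circularity arises.
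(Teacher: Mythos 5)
Your proof is correct, and it splits neatly against the paper's: the extension half is essentially the paper's own argument, while the inequality half takes a genuinely different route. For the extension, the paper likewise applies the machinery of Lemma~\ref{lemma:mu} to an auxiliary process, namely $Y(t,\cdot)+Y(s,\cdot)$, obtaining a measure $\beta_{s,t}$, and then sets $2\rho_{s,t}=\beta_{s,t}-\rho_{t,t}-\rho_{s,s}$ (its \eqref{eq:rho_meas_B1}); your symmetric polarization $\rho_{s,t}=\tfrac14(\lambda^+-\lambda^-)$ is an equivalent repackaging (indeed $\beta_{s,t}=\lambda^+$), and the paper brings in the difference process $Y(t,\cdot)-Y(s,\cdot)$ anyway to get the lower sign of the inequality. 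For \eqref{eq:rho_corr_var}, however, the paper never leaves measure theory: it proves $\sqrt{\beta_{s,t}(O)}\leq\sqrt{\rho_{t,t}(O)}+\sqrt{\rho_{s,s}(O)}$ (the $L^2(\Omega,P)$ triangle inequality) on countable unions of intervals, passes to a general Borel set $B$ by approximating $B$ by such unions simultaneously in all the measures (\eqref{eq:rho_meas_B1}--\eqref{eq:rho_meas_B2}), and then reads off $\pm\rho_{s,t}(B)\leq\sqrt{\rho_{t,t}(B)\rho_{s,s}(B)}$. You instead realize $\rho_{s,t}(B)$ as the covariance $\Exp{\Phi_t(\1_B)\Phi_s(\1_B)}$ of stochastic integrals and invoke Cauchy--Schwarz; this replaces the paper's set-approximation (regularity) step by a density-of-step-functions argument, and it front-loads the bilinear identity that the paper only develops after the lemma (its $J_{s,t}$, \eqref{eq:J_st(fg)}), where the paper actually uses \eqref{eq:rho_corr_var} to obtain the bound \eqref{eq:J_st(fg)_cont} --- so your remark that no circularity arises is exactly the right point, and it holds because you rely only on the cruder domination $|\rho_{s,t}|\leq\tfrac12(\rho_{s,s}+\rho_{t,t})$ coming from polarization. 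One precision worth adding: that domination makes the form $(f,g)\mapsto\int fg\,d\rho_{s,t}$ continuous on $L^2(\rho_{s,s}+\rho_{t,t})\times L^2(\rho_{s,s}+\rho_{t,t})$, not on $L^2(\rho_{t,t})\times L^2(\rho_{s,s})$ (continuity on the latter is precisely what requires \eqref{eq:rho_corr_var}); since you only need $f=g=\1_B$, approximating $\1_B$ by step functions in the single finite measure $\rho_{s,s}+\rho_{t,t}$ suffices, but the phrase ``for all $f,g\in L^2$'' should be read in that restricted sense. In exchange, your route makes the Cauchy--Schwarz structure of \eqref{eq:rho_corr_var} transparent, while the paper's stays self-contained at the level of measures.
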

\begin{proof}
For any interval $I$, $\rho_{t,t}(I) = \mu_t(t-I)$. This uniquely defines $\rho_{t,t}$ as a $\sigma$-finite (positive) Borel measure on $\R$ since $\mu_t$ is a $\sigma$-finite (positive) Borel measure on $\R$. 

Second, for any $s,t\in\R$ fixed, consider the process $\{Y(t,u) + Y(s,u) : u\in\R\}$. Since $X$ satisfies \eqref{US_X} and \eqref{Cont_mean2_X}, so does $\{Y(t,u) + Y(s,u) : u\in\R\}$. Thus, we
define $\beta_{s,t}(\{a\})=0$ and
$$\beta_{s,t}([a,b)) = \Exp{|(Y(t,b) + Y(s,b)) - (Y(t,a) + Y(s,a))|^2}.$$  
One can proceed as in Lemma~\ref{lemma:mu} and show that each $\beta_{s,t}$ can be extended to a $\sigma$-finite Borel measure on $\R$. Particularly, if $\{(a_k,b_k):k\in J\}$ is a countable set of disjoint intervals and \\ $O = \cup_{k\in J}(a_k,b_k)$ is their union, then by \eqref{def:rho_US} we have 
\begin{eqnarray*}
\beta_{s,t}(O)  
=\sum_{k\in J} \beta_{s,t}((a_k,b_k))
=\Exp{|\sum_{k\in J}(Y(t,b_k) + Y(s,b_k)) - (Y(t,a_k) + Y(s,a_k))|^2}
\end{eqnarray*}
Similarly, by \eqref{def:rho_US}
\begin{eqnarray*}
\rho_{t,t}(O)
&=&\Exp{|\sum_{k\in J}Y(t,b_k) - Y(t,a_k)|^2} \\
\rho_{s,t}(O)  
&=&\Exp{(\sum_{k\in J}Y(t,b_k) - Y(t,a_k))(\sum_{l\in J}Y(s,b_l) - Y(s,a_l))}
\end{eqnarray*}
Using the last three equations, we easily obtain
\begin{eqnarray}
\beta_{s,t}(O) &=& \rho_{t,t}(O) + 2\rho_{s,t}(O) + \rho_{s,s}(O)  \label{eq:rho_meas_O1} \\
\sqrt{\beta_{s,t}(O)} &\leq& \sqrt{\rho_{t,t}(O)} + \sqrt{\rho_{s,s}(O)} \label{eq:rho_meas_O2}
\end{eqnarray}
For any Borel set $B$, there are countable unions of intervals $(O_n)_{n\in\N}$ such that \\ 
$$\lim_{n\to\infty}\beta_{s,t}(O_n) = \beta_{s,t}(B),\hspace{4mm} 
\lim_{n\to\infty}\rho_{t,t}(O_n) = \rho_{t,t}(B),\hspace{4mm} 
\lim_{n\to\infty}\rho_{s,s}(O_n) = \rho_{s,s}(B).$$
Write \eqref{eq:rho_meas_O1} and \eqref{eq:rho_meas_O2} with $O_n$ and let $n\to\infty$. On any bounded interval $[-T,T]$ we obtain 
\begin{eqnarray}
2\rho_{s,t}(B) &=& \beta_{s,t}(B) - \rho_{t,t}(B) - \rho_{s,s}(B)  \label{eq:rho_meas_B1} \\
\sqrt{\beta_{s,t}(B)} &\leq& \sqrt{\rho_{t,t}(B)} + \sqrt{\rho_{s,s}(B)} \label{eq:rho_meas_B2}
\end{eqnarray}
Note that on any bounded interval, since $\beta_{s,t}, \rho_{t,t}, \rho_{s,s}$ are finite measures, $\lim_{n\to\infty}\rho_{s,t}(O_n)$ exists.
Now, \eqref{eq:rho_meas_B1} defined $\rho_{s,t}$ uniquely as a signed finite Borel measure on $[-T,T]$.

Third, it is immediate from \eqref{eq:rho_meas_B1} and \eqref{eq:rho_meas_B2} that $$\rho_{s,t}(B) \leq \sqrt{\rho_{t,t}(B)\rho_{s,s}(B)} $$ If we started with the process $\{Y(t,u) - Y(s,u) : u\in\R\}$, we would similarly obtain $$- \rho_{s,t}(B) \leq \sqrt{\rho_{t,t}(B)\rho_{s,s}(B)} .$$ Hence, we obtained \eqref{eq:rho_corr_var}.
\end{proof}
The integral with respect to $\rho_{s,t}$ is well defined for functions $f\in L^2(\R,\rho_{t,t})\cap L^2(\R,\rho_{s,s})$ with bounded support.

Now, for fixed $s,t\in\R$ and $f,g$ step functions, we define $J_{s,t}(f,g) = \Exp{Hf(t)Hg(s)}.$ Since $f,g$ are step functions, we can write 
\begin{eqnarray}\label{eq:J_st(fg)_step_functions}
f(t-u) = \sum_{k=0}^{n-1} b_k\1_{[u_{k},u_{k+1})}(u)\hspace{9mm} g(s-u) = \sum_{k=0}^{n-1} c_k\1_{[u_{k},u_{k+1})}(u) 
\end{eqnarray}
for some $u_0<u_1<\dots<u_n$. Then, by \eqref{def:rho} and \eqref{def:rho_US},
\begin{eqnarray}
J_{s,t}(f,g)
&=& \sum_{k,l=0}^{n-1} b_k c_l \Exp{ ( Y(t,u_{k+1}) - Y(t,u_{k}) )( Y(s,u_{l+1}) - Y(s,u_{l}) )  } \nonumber\\
&=& \sum_{k=0}^{n-1} b_kc_k \Exp{ ( Y(t,u_{k+1}) - Y(t,u_{k}) )( Y(s,u_{k+1}) - Y(s,u_{k}) )  } \nonumber\\
&=& \sum_{k=0}^{n-1} b_kc_k \hspace{2mm} \rho_{s,t}([u_{k},u_{k+1})) \nonumber\\
&=& \int_{\R} f(t-u)g(s-u)\ \rho_{s,t}(du). \label{eq:J_st(fg)}
\end{eqnarray}
Also, 
\begin{eqnarray}
|J_{s,t}(f,g)|
&\leq& \sum_{k=0}^{n-1} |b_k||c_k| \hspace{2mm} |\rho_{s,t}(  [u_{k},u_{k+1})  )| \nonumber\\
&\leq& \sum_{k=0}^{n-1} |b_k||c_k| \hspace{2mm} \sqrt{\rho_{s,s}([u_{k},u_{k+1}))\rho_{t,t}([u_{k},u_{k+1}))}\nonumber\\
&\leq& \sqrt{
\sum_{k=0}^{n-1} |b_k|^2 \hspace{2mm} \rho_{t,t}( [u_{k},u_{k+1}) ) 
\sum_{k=0}^{n-1} |c_k|^2 \hspace{2mm} \rho_{s,s}( [u_{k},u_{k+1}) ) } \nonumber\\
&=& \|f\|_{L^2(\R,\mu_t)} \|g\|_{L^2(\R,\mu_s)} \label{eq:J_st(fg)_cont}
\end{eqnarray}
Thus, $J_{s,t}$ is a bounded bilinear functional defined on a dense subspace of $L^2(\R,\mu_t)\times L^2(\R,\mu_s)$. Hence, $J_{s,t}$ has a unique continuous extension to $L^2(\R,\mu_t)\times L^2(\R,\mu_s)$. We consider this extension when we write \eqref{eq:J_st(fg)} and \eqref{eq:J_st(fg)_cont} for all $f\in L^2(\R,\mu_t)$ and $g\in L^2(\R,\mu_s)$, especially when $u\to f(t-u)g(s-u)$ is not integrable with respect to the signed measure $\rho_{s,t}$.

Rewriting \eqref{eq:J_st(fg)}, for all $f\in L^2(\R,\mu_t)$ and $g\in L^2(\R,\mu_s)$ we have $$\Exp{\iint f(t-u)g(s-u) Y(t,du)Y(s,dv)} = \int_{\R} f(t-u)g(s-u)\ \rho_{s,t}(du).$$ We use the short notation 
\begin{eqnarray}\label{eq:short_WSSUS_Y}
\Exp{Y(t,du)Y(s,dv)} = \rho_{s,t}(du) \delta(u-v)
\end{eqnarray}
In this line, the WSSUS assumption equivalently translates into $$\rho_{s,t} = \rho_{0,t-s}$$ for every $t,s\in\R$. We write $\rho_{t-s}$ instead of $\rho_{0,t-s}$ for simpler the notation.

In summary, IS, US and WSSUS assumptions translate into Definition~\ref{def:WSSUS_SIO} for SIOs.
\begin{definition}\label{def:WSSUS_SIO}\normalfont
Let $H$ be a SIO with integrated impulse response $Y$. $H$ satisfies the US assumption if for every $s,t\in\R$, $[a,b)\cap[c,d)=\emptyset$ implies 
$$\Exp{ (Y(t,b)-Y(t,a))(Y(s,d)-Y(s,c)) } = 0.$$
$H$ satisfies the IS assumption if $(Y(t,b)-Y(t,a))$ and $(Y(s,d)-Y(s,c))$ are not only uncorrelated but also independent random variables. 
In addition, if $\rho_{s,t}=\rho_{t-s}$, 
then $H$ satisfies the WSSUS (WSSIS, resp.) assumption.
\end{definition}

\begin{theorem}
If $H$ is a SIO that satisfies WSSUS assumption, then it is a linear operator defined on $L^2(\R,\mu)$. For every $f\in L^2(\R,\mu)$, we have 
\begin{eqnarray}\label{eq:WSSUS_H_isom}
\Exp{|Hf(t)|^2} = \int_{\R} |f(u+t)|^2 \mu(du). 
\end{eqnarray}
In particular, if $\nu$ is another Borel measure, then 
\begin{eqnarray}\label{eq:WSSUS_H_isom_2}
\Exp{\|Hf\|^2_{L^2(\R,\nu)}} = \|f\|^2_{L^2(\R,\mu*\nu)}.
\end{eqnarray}
\end{theorem}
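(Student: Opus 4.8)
The plan is to establish the pointwise identity \eqref{eq:WSSUS_H_isom} first, and then obtain the convolution identity \eqref{eq:WSSUS_H_isom_2} from it by integrating against $\nu$ and invoking Tonelli's theorem. The key observation is that Theorem~\ref{thm:SIO_2} already supplies, for each fixed $t$, the isometry $\Exp{|Hf(t)|^2}=\int_\R|f(u)|^2\,\mu_t(du)$, so the only genuinely new content of \eqref{eq:WSSUS_H_isom} is that the WSSUS hypothesis collapses the whole family $\{\mu_t\}_{t\in\R}$ to a single measure up to translation.

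First I would specialize \eqref{eq:J_st(fg)} by setting $s=t$ and $g=f$, giving $\Exp{|Hf(t)|^2}=\int|f(t-u)|^2\,\rho_{t,t}(du)$, and invoke Definition~\ref{def:WSSUS_SIO}, whose hypothesis $\rho_{s,t}=\rho_{t-s}$ forces $\rho_{t,t}=\rho_0$ for every $t$. Using the relation $\rho_{t,t}([a,b))=\mu_t((t-b,t-a])$ recorded before the definition, one checks that $\mu_t(t+A)=\rho_0(-A)$ is independent of $t$, and I define $\mu$ to be this common measure, $\mu(A):=\mu_t(t+A)$. The change of variables $u\mapsto t+u$ in the isometry of Theorem~\ref{thm:SIO_2} then turns $\int|f(u)|^2\,\mu_t(du)$ into $\int|f(t+u)|^2\,\mu(du)$, which is exactly \eqref{eq:WSSUS_H_isom}. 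For the domain claim I would note that, for each $t$, $\mu_t$ is a translate (after reflection) of the fixed measure $\mu$, so that as a set $L^2(\R,\mu_t)$ does not depend on $t$; combined with the translation invariance provided by Lemma~\ref{lemma:L2_mu}, this identifies the natural domain $\bigcap_{t}L^2(\R,\mu_t)$ of $H$ with $L^2(\R,\mu)$. The isometry \eqref{eq:WSSUS_H_isom} then extends from step functions to all of $L^2(\R,\mu)$ by the density of step functions (Lemma~\ref{lemma:L2_mu}) and completeness, exactly as in the construction of $I_t$.

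With \eqref{eq:WSSUS_H_isom} in hand, \eqref{eq:WSSUS_H_isom_2} follows by writing $\Exp{\|Hf\|^2_{L^2(\R,\nu)}}=\Exp{\int_\R|Hf(t)|^2\,\nu(dt)}$ and interchanging the expectation with the $\nu$-integral. Since the integrand is nonnegative, Tonelli's theorem gives $\int_\R\Exp{|Hf(t)|^2}\,\nu(dt)=\iint_{\R^2}|f(t+u)|^2\,\mu(du)\,\nu(dt)$ after substituting \eqref{eq:WSSUS_H_isom}. The resulting double integral is precisely $\int_\R|f(x)|^2\,(\mu*\nu)(dx)$ by the definition of the convolution of two Borel measures, which is the right-hand side of \eqref{eq:WSSUS_H_isom_2}.

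The main obstacle I anticipate is justifying the interchange of $\Exp{\cdot}$ with $\int\,d\nu$: this requires the map $(t,\omega)\mapsto Hf(t)(\omega)$ to be jointly measurable on $\R\times\Omega$, a regularity property of the process $t\mapsto Hf(t)$ that has not been addressed so far and should be established separately, for instance by producing a jointly measurable version through step-function approximation of $f$ together with the mean-square continuity of the increments. The accompanying bookkeeping, namely the reflection relating $\mu_t$ to $\rho_0$ and the verification that $\mu_t(t+A)$ is independent of $t$, is routine but must be carried out carefully so that the sign conventions in \eqref{eq:WSSUS_H_isom} and in the convolution are consistent.
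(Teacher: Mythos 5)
Your proposal is correct and takes essentially the same route as the paper: both use the relation $\rho_{t,t}([a,b))=\mu_t((t-b,t-a])$ together with the WSSUS hypothesis $\rho_{t,t}=\rho_{0,0}$ to show that $\mu_t$ is the translate of $\mu=\mu_0$ by $t$, then invoke the isometry of Theorem~\ref{thm:SIO_2} for \eqref{eq:WSSUS_H_isom} and Tonelli's theorem for \eqref{eq:WSSUS_H_isom_2}. The only difference is that you explicitly flag the joint measurability of $(t,\omega)\mapsto Hf(t)$ needed to justify interchanging $\Exp{\cdot}$ with the $\nu$-integral, a technical point the paper passes over in silence.
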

\begin{proof}
Remember that $\mu=\mu_0$. For each $t\in\R$, $L^2(\R,\mu_t)$ is translation invariant by Lemma~\ref{lemma:L2_mu}. By the WSSUS assumption, we also have
$$\mu_t([a,b)) = \rho_{t,t}((t-b,t-a]) = \rho_{0,0}((t-b,t-a]) = \mu_0([a,b)-t).$$
Thus, $L^2(\R,\mu_t) = L^2(\R,\mu)$, and for every $f\in L^2(\R,\mu)$ 
$$\int_{\R} |f(u)|^2 \mu_t(du) = \int_{\R} |f(u+t)|^2 \mu(du)$$ for all $t\in\R$. Hence we obtain \eqref{eq:WSSUS_H_isom}. Finally, if $\nu$ is another Borel measure, then $$ \Exp{ \int |Hf(t)|^2 \nu(dt) } = \iint |f(u+t)|^2 \mu(du)\nu(dt) = \int |f(y)|^2 \mu*\nu(dy) $$ which is \eqref{eq:WSSUS_H_isom_2}.
\end{proof}

\subsection{Scattering measure}

For any bounded Borel set $B\subseteq\R$ and $\gamma,\widetilde{\gamma}\in\R$, let
\begin{eqnarray}\label{def:scattering_0}
\scattering_{S,T}(\gamma,\widetilde{\gamma},B) = \int_{-T}^T\int_{-S}^S e^{2\pi i(s\widetilde{\gamma}-t\gamma)} \rho_{s,t}(B)  dsdt
\end{eqnarray}

\begin{proposition}\label{prop:eta_US}
If $H$ is a SIO with the US property, then $\eta_T$ defined in \eqref{def:eta} satisfies the following.
\begin{enumerate}
\item It has uncorrelated increments, i.e., for $u_1>u_2\geq u_3>u_4$, $\gamma,\widetilde{\gamma}\in\R$ and $T,S>0$
\begin{eqnarray*}
\Exp{(\eta_T(u_1,\gamma) - \eta_T(u_2,\gamma))(\overline{\eta_S(u_3,\widetilde{\gamma})} - \overline{\eta_S(u_4,\widetilde{\gamma}))}} =0
\end{eqnarray*}
\item It is continuous in the mean squared, i.e., for any $\gamma\in\R$ and $T>0$
\begin{eqnarray*}
\lim_{|b-a|\to 0} \Exp{|\eta_T(b,\gamma) - \eta_T(a,\gamma)|^2} = 0.
\end{eqnarray*}
\end{enumerate}
\end{proposition}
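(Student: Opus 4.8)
The plan is to reduce both statements to the correlation structure of $Y$ encoded by $\rho_{s,t}$, by writing each increment of $\eta_T$ as a deterministic time-integral of the corresponding increment of $Y$ and then interchanging the expectation with these integrals. Throughout I would rely on the fact that $\eta_T$ is only defined under the standing local-integrability hypothesis on the sample paths of $t\mapsto Y(t,u)$, which is exactly what legitimizes the Fubini interchanges below.

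For the first part I would start from
$$\eta_T(u_1,\gamma)-\eta_T(u_2,\gamma) = \int_{-T}^T e^{-2\pi it\gamma}(Y(t,u_1)-Y(t,u_2))\,dt,$$
together with the analogous expression for the conjugated $S$-increment, so that the left-hand side of the claim becomes the expectation of a double time-integral over the bounded domain $[-T,T]\times[-S,S]$. A Fubini argument with respect to $P\times dt\times ds$ — whose integrability I would verify by Cauchy--Schwarz, bounding $\Exp{|\cdots|}$ by $\sqrt{\mu_t(\cdot)}\,\sqrt{\mu_s(\cdot)}$ — then moves the expectation inside. The resulting integrand carries the factor $\Exp{(Y(t,u_1)-Y(t,u_2))(Y(s,u_3)-Y(s,u_4))}$, which vanishes identically for $u_1>u_2\geq u_3>u_4$ by the US property \eqref{def:rho_US}; hence the whole integral is $0$.

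For the second part the same expansion yields
$$\Exp{|\eta_T(b,\gamma)-\eta_T(a,\gamma)|^2} = \int_{-T}^T\int_{-T}^T e^{-2\pi i(t-s)\gamma}\,\rho_{s,t}([a,b))\,ds\,dt,$$
where I identify the inner expectation with $\rho_{s,t}([a,b))$ via \eqref{def:rho} (extended to intersections of intervals). Taking absolute values and applying the correlation bound \eqref{eq:rho_corr_var} in the form $|\rho_{s,t}([a,b))|\leq\sqrt{\rho_{s,s}([a,b))\,\rho_{t,t}([a,b))}$ decouples $s$ and $t$, so the double integral factors into the square
$$\left(\int_{-T}^T \sqrt{\rho_{t,t}([a,b))}\,dt\right)^2 = \left(\int_{-T}^T \sqrt{\mu_t((t-b,t-a])}\,dt\right)^2,$$
using $\rho_{t,t}([a,b))=\mu_t((t-b,t-a])=\Exp{|X(t,t-a)-X(t,t-b)|^2}$.

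Finally I would send $|b-a|\to 0$ inside this last integral. For each fixed $t$ the integrand tends to $0$ by the mean-square continuity \eqref{Cont_mean2_X} (equivalently, because $\mu_t$ is non-atomic). The main obstacle is justifying the passage of the limit through the $t$-integral: I would restrict to $|b-a|\le 1$ with $a,b$ in a fixed bounded interval $[-M,M]$ (the natural reading of the limit), so that $(t-b,t-a]\subseteq[t-M,t+M]$ and the integrand is dominated by $\sqrt{\mu_t([t-M,t+M])}$. Invoking the local-integrability hypothesis underlying the definition of $\eta_T$, which guarantees $t\mapsto\sqrt{\mu_t([t-M,t+M])}$ lies in $L^1([-T,T])$, dominated convergence then gives the claimed limit. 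Verifying this integrability estimate cleanly — the same one that underwrites the Fubini steps in both parts — is the crux of the argument.
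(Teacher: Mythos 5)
Your proposal follows essentially the same route as the paper's proof: expand the $\eta_T$-increments as time integrals of $Y$-increments, interchange expectation and integration so that part 1 reduces to the US identity \eqref{def:rho_US} (equivalently, to $\scattering_{S,T}$ evaluated on the empty intersection $[u_4,u_3)\cap[u_2,u_1)$), and for part 2 apply the correlation bound \eqref{eq:rho_corr_var} to factor the double integral into $\bigl(\int_{-T}^T \sqrt{\rho_{t,t}([a,b))}\,dt\bigr)^2$ and conclude by dominated convergence. The only difference is that you make explicit the Fubini and domination justifications that the paper invokes silently, which is a refinement rather than a different argument.
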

\begin{proof}
First, observe that
\begin{eqnarray*}
&& \Exp{(\eta_T(u_1,\gamma) - \eta_T(u_2,\gamma))(\overline{\eta_S(u_3,\widetilde{\gamma})} - \overline{\eta_S(u_4,\widetilde{\gamma}))}} \\
&=& \int_{-T}^T\int_{-S}^S e^{2\pi i(s\widetilde{\gamma}-t\gamma)} \Exp{(Y(t,u_1)-Y(t,u_2))(Y(s,u_3)-Y(s,u_4))} dtds \\
&=& \int_{-T}^T\int_{-S}^S e^{2\pi i(s\widetilde{\gamma}-t\gamma)} \rho_{s,t}([u_4,u_3)\cap[u_2,u_1)) dtds \\
&=& \scattering_{S,T}(\gamma,\widetilde{\gamma},[u_4,u_3)\cap[u_2,u_1))
\end{eqnarray*}
If $[u_4,u_3)$ and $[u_2,u_1)$ are disjoint, we get the uncorrelated increments property. 

Second, 
\begin{eqnarray*}
\Exp{|\eta_T(b,\gamma) - \eta_T(a,\gamma)|^2}
&=& \scattering_{T,T}(\gamma,\gamma,[a,b)) \\
&\leq& \int_{-T}^T\int_{-T}^T |\rho_{s,t}([a,b))| \ dsdt \\
&\leq& \left(\int_{-T}^T \sqrt{\rho_{t,t}([a,b))} \ dt\right)^2
\end{eqnarray*}
The last integral goes to $0$ as $|b-a|\to 0$ by the Lebesgue dominated convergence theorem.
\end{proof}

\vspace{3mm}

For every $s,t\in\R$, $f\in L^2(\R,\mu_t)$ and $g\in L^2(\R,\mu_s)$, \eqref{eq:J_st(fg)} and \eqref{def:scattering_0} together imply that
\begin{eqnarray}\label{eq:scattering_ST}
\1_{[-T,T]}(t)\1_{[-S,S]}(s) J_{s,t}(f,g) 
=\iiint e^{2\pi i(t\gamma-s\widetilde{\gamma})} f(t-u) g(s-u) \scattering_{S,T}(\gamma,\widetilde{\gamma},du)  d\gamma d\widetilde{\gamma}
\end{eqnarray}
Clearly, $\1_{[-T,T]}(t)\1_{[-S,S]}(s) J_{s,t}$ converges to $J_{s,t}$ as $T,S\to \infty$. In order to simplify the notation, we write
\begin{eqnarray}\label{eq:scattering}
J_{s,t}(f,g) 
=\iiint e^{2\pi i(t\gamma-s\widetilde{\gamma})} f(t-u) g(s-u) \scattering(\gamma,\widetilde{\gamma},du)  d\gamma d\widetilde{\gamma}
\end{eqnarray}
even though $(\scattering_{S,T})$ might not converge weakly to a function or a measure as $T,S\to \infty$. In this case, $\scattering$ will only be a symbol for $J_{s,t}$. $\scattering$ is the {\it scattering measure} of the SIO $H$.

In order to obtain the counterpart of \eqref{eq:short_WSSUS_Y} for $\scattering$ , we remind 
\begin{eqnarray*}
\1_{[-T,T]}(t)\1_{[-S,S]}(s) J_{s,t}(f,g)
= \Exp{\iiiint e^{2\pi i(t\gamma-s\widetilde{\gamma})} f(t-u) g(s-v) \eta_T(du,\gamma) \overline{\eta_S(dv,\widetilde{\gamma})} d\gamma d\widetilde{\gamma}}
\end{eqnarray*}
and 
\begin{eqnarray*}
J_{s,t}(f,g)
= \Exp{\iiiint e^{2\pi i(t\gamma-s\widetilde{\gamma})} f(t-u) g(s-v) \eta(du,\gamma) \overline{\eta(dv,\widetilde{\gamma})} d\gamma d\widetilde{\gamma}}
\end{eqnarray*}
Considering these with \eqref{eq:scattering_ST} and \eqref{eq:scattering}, we derive the short notation
\begin{eqnarray}
\Exp{\eta_T(du,\gamma)\overline{\eta_S(dv,\widetilde{\gamma})}} &=& \scattering_{S,T}(\gamma,\widetilde{\gamma},du)\delta(u-v) \label{eq:short_eta_T} \\
\Exp{\eta(du,\gamma)\overline{\eta(dv,\widetilde{\gamma})}} &=& \scattering(\gamma,\widetilde{\gamma},du)\delta(u-v) \label{eq:short_eta}
\end{eqnarray}
WSSUS assumption is satisfied if and only if $\rho_{s,t} = \rho_{t-s}$ if and only if 
\begin{eqnarray}
\Exp{\eta_T(du,\gamma)\overline{\eta_S(dv,\widetilde{\gamma})}} &=& \scattering_{S,T}(\gamma,du)\delta(\gamma-\widetilde{\gamma})\delta(u-v) \label{eq:short_WSSUS_eta_T}\\
\Exp{\eta(du,\gamma)\overline{\eta(dv,\widetilde{\gamma})}} &=& \scattering(\gamma,du)\delta(\gamma-\widetilde{\gamma})\delta(u-v)  \label{eq:short_WSSUS_eta}
\end{eqnarray}

\section{Independent decomposition of SIO}\label{sec:H_decomp}

For each $t\in\R$, $X(t,.)$ is continuous in probability by \eqref{Cont_mean2_X}. If $H$ satisfies the weak-IS property, then $X(t,.)$ has independent increments by \eqref{US_X}. Then by Proposition~\ref{prop:Y_int}, $Y(t,.)$ is continuous in probability and has independent increments, as well. Thus, both $X(t,.)$ and $Y(t,.)$ becomes {\it additive process}es. In this section, our purpose is to convey some of the properties of the additive processes into our setting. Therefore, we often omit the proofs of the classical results, and refer the reader to classical references for the proofs.

A stochastic process $(Z_u)$ is customarily defined for $u\geq 0$. The additive processes are defined for  $u\geq 0$ in Section~\ref{sec:additive_process}. However, $X(t,u)$ and the related processes are defined for $u\in\R$. 
Thus, the definitions and the theorems in Section~\ref{sec:additive_process} might look slightly different when we apply them in Section~\ref{sec:decomposition}.

\subsection{Overview of Additive Processes}\label{sec:additive_process}

A stochastic process $(Z_u)$, which is continuous in probability and has independent increments, is called an {\it additive process} \cite{Sato99}. It is customary to require $Z_0=0$ a.s. In addition, if $(Z_u)$ has stationary increments, it is a {\it Levy process}. 

Although stationary increments property is missing, a version of Levy-Khinchine Theorem holds for additive processes. 

\begin{theorem}\label{thm:Levy-Khinchine}
Let $(Z_u)$ be an additive process with probability distributions $(\zeta_u)$. Then, each $\zeta_u$ is infinitely divisible probability measure. Furthermore, for $a>0$, there exist
\begin{enumerate}
\item continuous $m:[0,\infty)\to\R$  with $m(0) = 0$,
\item continuous increasing $\alpha:[0,\infty)\to\R$ with $\alpha(0)=0$,
\item $\nu$ (Levy) measure on $\R\times\R\backslash\{0\}$ satisfying $\nu(\{u\}\times B)= 0$, $\nu_u(\{0\}) = 0$, $\nu_0(B) = 0$ and $$\int_{\R} \min(1,x^2) \nu_u(dx) <\infty$$ for every $u\in\R$ and every Borel set $B\subseteq\R\backslash\{0\}$, where $$\nu([w,u)\times B) = \nu_u(B) - \nu_w(B) $$

\end{enumerate}
such that the characteristic function $\widehat{\zeta_u}$ is given by 
\begin{eqnarray}\label{eq:Levy-Khinchine}
\widehat{\zeta_u}(\gamma) = \exp\left(-i\gamma m(u) - \alpha(u)\frac{\gamma^2}{2} 
+ \int_{|y|\geq a}(e^{iy\gamma} -1)\ \nu_u(dy) 
+ \int_{|y|<a}(e^{iy\gamma} -1 -iy\gamma)\ \nu_u(dy)  \right)  
\end{eqnarray}
Furthermore, if $Z$ also has stationary increments, i.e., if it is a Levy process, then we get $m(u) = um(1)$, $\alpha(u)=u\alpha(1)$, $\nu_u(B) = u\nu_{1}(B)$ in \eqref{eq:Levy-Khinchine}. 
\end{theorem}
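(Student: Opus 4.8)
The plan is to reduce the statement to the classical Lévy--Khinchine theory for infinitely divisible laws, treating the dependence on $u$ separately, and to organize the argument in four stages: infinite divisibility of each $\zeta_u$; the representation \eqref{eq:Levy-Khinchine} for each fixed $u$; the regularity and monotonicity of the triplet $(m,\alpha,\nu)$ in $u$; and finally the specialization to the Lévy case.

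First I would establish that each $\zeta_u$ is infinitely divisible. Fix $u\geq 0$ and, for each $n$, partition $[0,u]$ into $n$ equal subintervals, setting the increments $X_{n,k} = Z_{ku/n} - Z_{(k-1)u/n}$. Because $(Z_u)$ has independent increments, $Z_u = \sum_{k=1}^n X_{n,k}$ is a sum of independent summands forming one row of a triangular array; continuity in probability guarantees that $\max_k \Prob{|X_{n,k}|>\epsilon}\to 0$ as $n\to\infty$, so the array is a null array. Khinchine's classical theorem on limits of null-array sums \cite{Sato99} then identifies $\zeta_u$ as infinitely divisible. I would also record here the standard lemma that $\widehat{\zeta_u}(\gamma)$ never vanishes, which underlies everything that follows. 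Granting infinite divisibility, the classical Lévy--Khinchine representation supplies, for each fixed $u$ and each fixed truncation radius $a>0$, a \emph{unique} triplet $(m(u),\alpha(u),\nu_u)$ for which $\widehat{\zeta_u}$ takes the form \eqref{eq:Levy-Khinchine}.

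The heart of the argument is the passage from fixed-$u$ triplets to the jointly regular objects $m$, $\alpha$, $\nu$. Here I would exploit the independent-increments structure: for $s<u$ the increment $Z_u - Z_s$ is itself infinitely divisible, and since $\widehat{\zeta_u} = \widehat{\zeta_s}\cdot\widehat{\zeta_{s,u}}$, with $\zeta_{s,u}$ the law of the increment, uniqueness of the triplet forces the triplet of $\zeta_{s,u}$ to be $(m(u)-m(s),\,\alpha(u)-\alpha(s),\,\nu_u-\nu_s)$. Because the Gaussian variance and the Lévy measure of a genuine increment are nonnegative, this immediately gives that $\alpha$ is increasing and that $u\mapsto\nu_u(B)$ is nondecreasing for each Borel $B\subseteq\R\backslash\{0\}$; the latter is exactly what is needed for $\nu([w,u)\times B)=\nu_u(B)-\nu_w(B)$ to define a positive measure on $\R\times\R\backslash\{0\}$, with $\nu_0=0$ forced by $Z_0=0$. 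Continuity of $m$ and $\alpha$, the absence of atoms in $u$ (so that $\nu(\{u\}\times B)=0$), and $m(0)=0$ then follow from continuity in probability: as $s\to u$ the increment $Z_u - Z_s\to 0$ in probability, so $\zeta_{s,u}\Rightarrow\delta_0$, and the continuity theorem for infinitely divisible laws \cite{Sato99} forces its triplet to tend to $(0,0,0)$.

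Finally, in the Lévy case the increment law $\zeta_{s,u}$ depends only on $u-s$, so each of $m(u)-m(s)$, $\alpha(u)-\alpha(s)$, $\nu_u(B)-\nu_s(B)$ depends only on $u-s$; combined with the additivity just established this yields Cauchy's functional equation in each coordinate, and the already-proven continuity upgrades the solutions to the linear forms $m(u)=um(1)$, $\alpha(u)=u\alpha(1)$, $\nu_u(B)=u\nu_1(B)$. I expect the main obstacle to be the third stage: making rigorous the claim that the triplet varies continuously and monotonically with $u$. This rests on uniqueness together with the convergence theorem for the Lévy--Khinchine triplet under weak convergence of infinitely divisible laws, which is delicate because the correct topology on Lévy measures involves the weight $\min(1,x^2)$ rather than ordinary weak convergence. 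Once that machinery is invoked, the monotonicity and regularity are comparatively routine, while the infinite divisibility and the linear specialization are standard and could in fact simply be cited from \cite{Sato99}.
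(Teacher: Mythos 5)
Your proposal is correct, but note that the paper itself does not prove this theorem: Theorem~\ref{thm:Levy-Khinchine} is quoted as classical background, and its entire ``proof'' is the citation ``See \cite{Kallenberg97} Chapter~13, \cite{Sato99} Section~9.'' What you have written is, in substance, a faithful reconstruction of the argument given in those references (notably Sato, Section~9): (i) infinite divisibility of each $\zeta_u$ via the Khinchine null-array argument, where the point to make explicit is that stochastic continuity on the compact interval $[0,u]$ is automatically \emph{uniform}, which is what makes the triangular array of increments a null array; (ii) the fixed-$u$ Levy--Khinchine representation with fixed truncation radius $a$, whose uniqueness you correctly make the engine of everything else; (iii) additivity of the triplet over increments (from $\zeta_u = \zeta_s * \zeta_{s,u}$ plus uniqueness), giving monotonicity of $\alpha$ and of $u\mapsto\nu_u(B)$, followed by continuity of the triplet from $\zeta_{s,u}\Rightarrow\delta_0$ and the convergence theorem for triplets --- you rightly single this out as the delicate step, since under weak limits the Gaussian variance and the small-jump mass can mix, and the correct statement controls the quantity $\alpha + \int \min(1,y^2)\,d\nu$ jointly, which is exactly what one needs to conclude that all three components of the increment triplet tend to zero; and (iv) the Cauchy functional equation, upgraded by continuity, in the stationary case. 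So your route and the paper's are the same in spirit --- both rest on classical Levy--Khinchine theory --- the difference being that you actually carry out the proof that the paper delegates wholesale to the literature; to make your sketch fully rigorous one only needs to fill in the standard lemmas you invoke by name (uniform stochastic continuity, non-vanishing of infinitely divisible characteristic functions, and the uniqueness and convergence theorems for generating triplets), which is precisely the content of the cited chapters of \cite{Sato99} and \cite{Kallenberg97}.
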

\begin{proof}
See \cite{Kallenberg97} Chapter~13, \cite{Sato99} Section~9.
\end{proof}

Just as Theorem~\ref{thm:Levy-Khinchine} is a version of Levy-Khinchine Theorem for aditive processes, there is a version of Levy-Ito decomposition for additive processes. We start with some definitions.

\begin{definition}\label{def:Poisson-random-measure}
Let $(Z_u)$ be a rcll (right continuous left limit) stochastic process. 
\begin{enumerate}
\item Let $\Delta Z_u = Z_u - Z_{u-}$ where $Z_{u-} = \lim_{w\to u-} Z_w$. $(\Delta Z_u)$ is the {\it jump process} of $(Z_u)$.

\item For any Borel set $B\subseteq\R\backslash\{0\}$, let $J^u_B = \{ 0<w\leq u: \Delta Z_w \in B \}$, and
\begin{eqnarray}\label{def:N}
N(u,B) = |J^u_B| = \sum_{0<w\leq u} \1_B(\Delta Z_w).
\end{eqnarray}
Note that $J^u_B$ is the set of jump points, where the jump size is equal to some $x\in B$. Thus, for each $u\in\R$ fixed, $N$ is a random counting measure.

\item Let $\lambda_u(B) = \Exp{N(u,B)}$ and $\widetilde{N}(u,B) = N(u,B) - \lambda_u(B)$. $\widetilde{N}$ is the compensated random measure associated to $N$.
\end{enumerate}
\end{definition}

Note that, by definition, for each $u\in\R$ $$N(u,B) = \sum_{x\in B} N(u,\{x\}),$$ and same is true for $\widetilde{N}$. Thus, for a nonnegative Borel measurable function $g:\R\to\R$, 
\begin{eqnarray}\label{def:poisson_integral_1}
\int_B g(y)N(u,dy) 
= \sum_{x\in B} g(x)N(u,\{x\}) 
= \sum_{w\in J^u_B} g(\Delta Z_w)
\end{eqnarray}
In fact, this holds for all Borel functions by Theorem~\ref{thm:Poisson_random_measure}.1. Second, \eqref{def:poisson_integral_1} defines a random process with piecewise constant sample paths, which has jumps exactly at $u\in J_B = \{w\geq 0: \Delta Z_w\in B\}$ with jump size $g(\Delta Z_u)$. Thus, for any rcll function $f:\R\to\R$, $T>0$
\begin{eqnarray}\label{def:poisson_integral_2}
\int_{0}^T f(u) \int_B g(y)N(du,dy) 
= \sum_{u\in J^T_B} f(u-) g(\Delta Z_u)
\end{eqnarray}

If $Z$ is an additive process, then there is a rcll process equal to $Z$ a.s. (see, e.g. \cite{Applebaum09, Kallenberg97,Sato99}). Thus, $N$ and $\widetilde{N}$ can be defined for an additive process as in Definition~\ref{def:Poisson-random-measure}. Furthermore,
\begin{theorem}\label{thm:Poisson_random_measure}
Let $B$ be a Borel subset of $\R$, $0\notin\overline{B}$. Then, 
\begin{enumerate}
\item $J^u_B$ is a finite set a.s., thus $N(u,B)<\infty$ for a.e. $u>0$.

\item $\{N(u,B):u>0\}$ is an inhomogeneous (unless $\lambda_u(B) = u\lambda_1(B)$) Poisson process with 
$$\Prob{N(u,B) = k} = e^{-\lambda_u(B)} \frac{(\lambda_u(B))^k}{k!}$$
for $k\in\N$, where $\lambda_u(B) = \Exp{N(u,B)}$.

\item If $(B_k)_{k=1}^n$ are disjoint Borel subsets of $\R\backslash\{0\}$, then for each $u$, $(N(u,B_k))_{k=1}^n$ are independent random variables.

\item If $(B_k)_{k=1}^n$ are disjoint Borel subsets of $\R\backslash\{0\}$, $(f_k)_{k=1}^n$ are Borel measurable functions, let $$Y^k_u = \int_{B_k} f_k(y) N(u,dy)$$ then $Y^1,\dots,Y^n$ are independent processes.

\item For a Borel measurable function $f:\R\to\R$, 
\begin{eqnarray*}
\Exp{\exp\left(i\gamma\int_B f(y)N(u,dy)\right)} 
= \exp\left( \int_B (e^{i\gamma f(y)}-1)\ \lambda_u(dy) \right)
\end{eqnarray*}
and 
\begin{eqnarray*}
\Exp{\exp\left(i\gamma\int_B f(y)\widetilde{N}(u,dy)\right)} 
= \exp\left( \int_B (e^{i\gamma f(y)}-1-i\gamma f(y))\ \lambda_u(dy) \right)
\end{eqnarray*}

\end{enumerate}
\end{theorem}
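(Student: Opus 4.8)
The plan is to adapt, item by item, the classical theory of Poisson random measures associated to additive processes, using the rcll version of $Z$ guaranteed above together with the Lévy–Khinchine representation of Theorem~\ref{thm:Levy-Khinchine}. For part 1 I would argue by path regularity: since $0\notin\overline{B}$, there is a $\delta>0$ with $B\subseteq\{|x|\ge\delta\}$, and an rcll function on a compact interval $[0,u]$ can have only finitely many jumps of size at least $\delta$ — otherwise the jump times would accumulate at some $w\in[0,u]$, contradicting the existence of both one-sided limits at $w$. Hence $J^u_B$ is finite on every rcll path, and therefore a.s.\ for $Z$, so $N(u,B)<\infty$.

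For parts 2 and 3 the idea is to identify $\{N(u,B):u>0\}$ as an inhomogeneous Poisson process. Since $Z$ has independent increments, so does $N(\cdot,B)$, because $N(u,B)-N(w,B)$ is a function of the increments of $Z$ on $(w,u]$. Continuity in probability of $Z$ forces $\Prob{\Delta Z_t\neq 0}=0$ for every fixed $t$, so $N(\cdot,B)$ has no fixed discontinuities and is continuous in probability; moreover distinct jumps occur at distinct times a.s., so $N(\cdot,B)$ is integer-valued, nondecreasing, with unit jumps. A nondecreasing, integer-valued process with independent increments, no fixed jump times, and $N(0,B)=0$ is an inhomogeneous Poisson process (see \cite{Sato99,Kallenberg97}), and its law is Poisson with mean $\lambda_u(B)=\Exp{N(u,B)}$, which is part 2. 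For part 3, disjoint size-sets $B_k$ cannot be hit by a single jump, so the increments $N(u,B_k)-N(w,B_k)$ over a common interval are built from disjoint events; I would verify independence by factorizing the joint characteristic function, using the independent-increment property together with the Poisson marginals just obtained.

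Parts 4 and 5 then follow from part 3. For part 4, \eqref{def:poisson_integral_1} exhibits $Y^k_u=\int_{B_k}f_k(y)N(u,dy)=\sum_{w\in J^u_{B_k}}f_k(\Delta Z_w)$ as a measurable functional of the jump measure restricted to $B_k$; since these restrictions live on disjoint sets, the independence in part 3 (applied to the full random measures, not merely the counts) gives independence of $Y^1,\dots,Y^n$. For part 5 I would first take a simple function $f=\sum_j c_j\1_{B_j}$ with disjoint $B_j\subseteq B$, so that $\int_B f(y)N(u,dy)=\sum_j c_j N(u,B_j)$ is a sum of independent Poisson variables by part 3; its characteristic function is $\prod_j\exp\!\big(\lambda_u(B_j)(e^{i\gamma c_j}-1)\big)=\exp\!\big(\int_B(e^{i\gamma f(y)}-1)\,\lambda_u(dy)\big)$. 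Approximating a general Borel $f$ by such simple functions and passing to the limit (convergence in probability of the integrals, dominated convergence in the exponent) yields the first identity; subtracting the mean $i\gamma\int_B f\,d\lambda_u$, which is exactly the effect of the compensation $\widetilde{N}=N-\lambda$, turns $(e^{i\gamma f}-1)$ into $(e^{i\gamma f}-1-i\gamma f)$ and gives the second. I would note in passing that $\lambda_u$ here is the Lévy measure $\nu_u$ of \eqref{eq:Levy-Khinchine}, so these formulas are consistent with Theorem~\ref{thm:Levy-Khinchine}.

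The main obstacle I anticipate is making parts 3 and 5 fully rigorous in the inhomogeneous regime: establishing independence of the random measure on disjoint size-sets without the crutch of stationary increments, and controlling the limit in part 5 for the compensated integral, where $f$ need only satisfy $\int_B|f|^2\,d\lambda_u<\infty$ rather than $\int_B|f|\,d\lambda_u<\infty$. In that case the defining sums converge only in $L^2$, so the dominated convergence argument for the characteristic function must be organized around the compensated $L^2$ isometry rather than around absolute convergence.
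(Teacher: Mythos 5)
The paper offers no proof of this theorem beyond the citation ``See \cite{Sato99} Chapter~4, or \cite{Applebaum09} Section~2.3,'' and your sketch reconstructs precisely the classical arguments contained in those references --- finitely many jumps of size $\geq\delta$ along an rcll path on a compact interval; the characterization of integer-valued, nondecreasing, independent-increment processes with no fixed discontinuities as inhomogeneous Poisson processes; independence on disjoint size-sets via non-simultaneity of jumps and factorization of joint characteristic functions; and simple-function approximation for the exponential formulas --- so your proposal is correct and takes essentially the same route as the source to which the paper defers. The two obstacles you flag at the end are indeed where those references do the real work, though note that in part 5 as stated the hypothesis $0\notin\overline{B}$ already forces $\lambda_u(B)=\nu_u(B)<\infty$, so both integrals converge absolutely and the compensated-$L^2$ subtlety you anticipate only becomes essential in the subsequent L\'evy--It\^o construction on $\{|y|<a\}$, not in this theorem itself.
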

\begin{proof}
See \cite{Sato99} Chapter~4, or \cite{Applebaum09} Section~2.3.
\end{proof}

Let $J^u_a = \{0<w\leq u: |\Delta Z_w|\geq a\}$ and let 
$$P^a_u = \int_{|y|\geq a} y N(u,dy) = \sum_{w\in J^u_a} \Delta Z_w $$
The process $P^a$ clearly has rcll piecewise constant sample paths. The jump points of $P^a$ are same as the jump points of $Z$ with jump size greater than $a$. Then, the sample paths of $Z-P^a$ are rcll, and the jump sizes at points of discontinuity are less than $a$. 

$(P^a)_{a>0}$ generally does not converge in probability. 
Therefore, $N$ is replaced by $\widetilde{N}$ in order to capture smaller jumps. Let $$\widetilde{P}^{\epsilon,a}_u = \int_{\epsilon<|y|<a} y \widetilde{N}(u,dy)$$ Then, $(\widetilde{P}^{\epsilon,a})_{\epsilon>0}$ converges in probability to a stochastic process $\widetilde{P}^a$ (see \cite{Applebaum09,Sato99}). Notationally, it is simply written $$\widetilde{P}^a_u = \int_{|y|<a} y \widetilde{N}(u,dy).$$ $P^a$ and $\widetilde{P}^a$ are independent processes as a result of Theorem~\ref{thm:Poisson_random_measure}.4.

Since $P^a + \widetilde{P}^a$ captures all of the jumps of $Z$, the sample paths of $Z-P^a-\widetilde{P}^a$ are continuous. The characteristic function of $P^a + \widetilde{P}^a$ is 
$$\Exp{e^{i\gamma (P^a_u + \widetilde{P}^a_u)}} 
= \exp\left( \int_{|y|\geq a} (e^{i\gamma y}-1)\ \lambda_u(dy) 
+ \int_{|y|<a} (e^{i\gamma y}-1-i\gamma y)\ \lambda_u(dy)  \right) .$$ 
$\lambda_u$ really is equal to $\nu_u$ in Theorem~\ref{thm:Levy-Khinchine} \cite{Sato99}. 

Below is the version of Levy-Ito decomposition for additive processes.
\begin{theorem}\label{thm:Levy-Ito}
Let $Z$ be an additive process. For $a>0$, there exist
\begin{enumerate}
\item continuous $m:[0,\infty)\to\R$  with $m(0) = 0$,
\item continuous increasing $\alpha:[0,\infty)\to\R$ with $\alpha(0)=0$, and a Gaussian process $G$ with $$\Exp{e^{i\gamma G_u}} = e^{- \alpha(u)\gamma^2/2}$$
\item a Poisson random measure $N$ given by \eqref{def:N}, 
\end{enumerate}
such that
\begin{eqnarray}\label{eq:Levy-Ito}
Z_u + m(u) = G_{u} + \int_{|y|\geq a} y N(u,dy) + \int_{|y|<a} y \widetilde{N}(u,dy)
\end{eqnarray}
The three random processes that appear in the right hand side of \eqref{eq:Levy-Ito} are independent additive processes.
\end{theorem}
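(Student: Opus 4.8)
The plan is to assemble the processes already built in the discussion preceding the statement and then to isolate the one genuinely delicate point, the independence of the continuous part from the jumps. I would fix the rcll version of $Z$, the Poisson random measure $N$ of \eqref{def:N}, and the jump processes $P^a_u = \int_{|y|\geq a} y\, N(u,dy)$ and $\widetilde{P}^a_u = \int_{|y|<a} y\, \widetilde{N}(u,dy)$. By Theorem~\ref{thm:Poisson_random_measure}.4 these are independent additive processes, and by construction $P^a + \widetilde{P}^a$ carries every jump of $Z$. Setting $C_u := Z_u - P^a_u - \widetilde{P}^a_u$, the sample paths of $C$ are continuous, as already observed. I would first record that $C$ is an additive process: continuity in probability is inherited from $Z$ and the jump parts, while independence of the increments of $C$ follows from the independence claim below together with the independent increments of $Z$ and of $P^a+\widetilde{P}^a$, by a short characteristic-function cancellation.

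The hard part is to show that $C$ is independent of the pair $(P^a, \widetilde{P}^a)$. My approach would be to reduce to a finite family of disjoint annuli $\{|y|\geq a\}$ and $\{\epsilon \leq |y| < a\}$ and to exploit the independent increments of $Z$. For a partition $0 = t_0 < \cdots < t_n = u$, each increment $Z_{t_j}-Z_{t_{j-1}}$ is infinitely divisible, and its large-jump contribution over $\{|y|\geq a\}$ together with the compensated contribution over each finer annulus splits off as an independent convolution factor of that increment; the leftover continuous contribution is therefore independent of these jump contributions within each interval, and across intervals everything is independent because $Z$ has independent increments. Passing to the limit $\epsilon \to 0$ (in which $\widetilde{P}^{\epsilon,a} \to \widetilde{P}^a$ in probability, as recalled above) preserves independence and yields $C \perp (P^a, \widetilde{P}^a)$. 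This is exactly the independence-of-continuous-and-jump-parts step of the classical L\'evy--It\^o theorem, and I would either carry it out along these lines or cite \cite{Sato99,Applebaum09}; it is the step that is not formal.

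With independence in hand the identification is computational. Since $C$ and $P^a + \widetilde{P}^a$ are independent, the characteristic function of $Z_u$ factors as the characteristic function of $C_u$ times that of $P^a_u + \widetilde{P}^a_u$. The latter is the exponential displayed just before the statement, and since $\lambda_u = \nu_u$ it coincides with the two jump integrals appearing in the L\'evy--Khinchine formula \eqref{eq:Levy-Khinchine}. Dividing, I obtain that the characteristic function of $C_u$ equals $\exp(-i\gamma m(u) - \alpha(u)\gamma^2/2)$, with $m$ and $\alpha$ the functions furnished by Theorem~\ref{thm:Levy-Khinchine}, which are continuous and vanish at $0$. Hence the increments of $C$ are Gaussian, so $C$ is a Gaussian process, and $G_u := C_u + m(u)$ is centered Gaussian with $\Exp{e^{i\gamma G_u}} = e^{-\alpha(u)\gamma^2/2}$. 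Rearranging gives precisely \eqref{eq:Levy-Ito}. Finally, mutual independence of $G$, $P^a$ and $\widetilde{P}^a$ follows by combining $P^a \perp \widetilde{P}^a$ (Theorem~\ref{thm:Poisson_random_measure}.4) with the independence $C \perp (P^a,\widetilde{P}^a)$ just established, and each of the three is additive.
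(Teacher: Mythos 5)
The paper never proves Theorem~\ref{thm:Levy-Ito} itself: in line with its stated policy of omitting proofs of classical results, its entire ``proof'' is the citation to \cite{Kallenberg97}, Chapter~13, and \cite{Sato99}, Chapter~4. Your proposal reconstructs the classical argument contained in those references, using exactly the ingredients the paper assembles in the surrounding discussion (the rcll modification of $Z$, the processes $P^a$ and $\widetilde{P}^a$, their independence via Theorem~\ref{thm:Poisson_random_measure}.4, the path-continuity of $Z-P^a-\widetilde{P}^a$, and the identification $\lambda_u=\nu_u$), so in substance you and the paper rest on the same classical proof; you simply exhibit its skeleton rather than pointing at it. Your identification of the Gaussian part is sound: infinitely divisible characteristic functions never vanish, so the division is legitimate, and since $C=Z-P^a-\widetilde{P}^a$ has independent increments, Gaussian increments do upgrade to a Gaussian process, with $m$ and $\alpha$ inherited from Theorem~\ref{thm:Levy-Khinchine}.

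The one place your sketch would not survive being carried out literally is the key step $C \perp (P^a,\widetilde{P}^a)$. You argue that, by infinite divisibility, the jump contributions of each increment ``split off as an independent convolution factor.'' But a factorization of a \emph{law} as a convolution does not produce independent \emph{random variables} on the given probability space; many dependent pairs have laws whose convolution is the law of the sum. The genuine argument computes the joint characteristic function of the residual and of the Poisson integrals over a common refining partition and passes to the limit (\cite{Sato99}, Chapter~4, or \cite{Applebaum09}, Section~2.4), or uses the point-process machinery of \cite{Kallenberg97}. Since you explicitly flag this as the non-formal step and offer to cite \cite{Sato99,Applebaum09} for it, your proposal is acceptable as written --- and then it is doing exactly what the paper does, only with the surrounding assembly made explicit --- but the ``carry it out along these lines'' alternative, as sketched, is a genuine gap rather than a routine verification.
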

\begin{proof}
See \cite{Kallenberg97} Chapter~13, \cite{Sato99} Chapter~4.
\end{proof}

Let $B$ be a standart Brownian motion. Then, $(G_u)$ and $(B_{\alpha(u)})$ has the same probability distribution, as well as their increments. In fact, let $0<v<u$. Since the increments of $G$ are independent,
\begin{eqnarray*}
e^{- \alpha(u)\gamma^2/2} 
= \Exp{e^{i\gamma (G_u-G_0)}} 
= \Exp{e^{i\gamma (G_u-G_v)}}\Exp{e^{i\gamma (G_v-G_0)}}
= \Exp{e^{i\gamma (G_u-G_v)}} e^{- \alpha(v)\gamma^2/2}
\end{eqnarray*}
Thus, $$\Exp{e^{i\gamma (G_u-G_v)}} = e^{- (\alpha(u)-\alpha(v))\gamma^2/2} = \Exp{e^{i\gamma (B_{\alpha(u)}-B_{\alpha(v)})}}$$ where the second equality is by the definition of the Brownian motion. Consequently, increments of $(G_u)$ and $(B_{\alpha(u)})$ has the same probability distribution. Since $G_0 = B_0 = 0$, these two processes have te same probability distribution as well.

Second, if $\alpha$ is absolutely continuous, we define $$Y_u = \int_0^u \sqrt{\alpha'_w}\ dB_w$$ Let let $0<v<u$. Then, $$\Exp{e^{i\gamma (Y_u-Y_v)}} = e^{- (\alpha(u)-\alpha(v))\gamma^2/2}$$ i.e., increments of $(Y_u)$ has the same probability distribution as $(G_u)$ and $(B_{\alpha(u)})$. Since $Y_0 = 0$, all three processes have the same distribution as well.

\begin{theorem}\label{thm:Independent_additive_processes}
Let $a>0$ and let $Z,W$ be two additive processes with Levy-Ito decompositions
\begin{eqnarray*}
Z_u + m(u) &=& G_{u} + P^a_u + \widetilde{P}^a_u \\
W_u + \mu(u) &=& \Gamma_{u} + \Pi^a_u + \widetilde{\Pi}^a_u 
\end{eqnarray*}
$Z,W$ are independent if and only if $G, \Gamma, P^a, \Pi^a, \widetilde{P}^a, \widetilde{\Pi}^a$ are pairwise independent processes.
\end{theorem}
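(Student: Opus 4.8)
The plan is to prove both implications, treating the forward (``only if'') direction as bookkeeping and concentrating effort on the backward (``if'') direction.

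For the forward direction, I would first observe that each of the three summands on the right-hand side of \eqref{eq:Levy-Ito} is a measurable functional of the rcll path of $Z$ alone. Indeed, the jump process $\Delta Z$ is recovered pathwise from $Z$, so the Poisson random measure $N$ of Definition~\ref{def:Poisson-random-measure} is $\sigma(Z)$-measurable; hence $P^a=\int_{|y|\ge a} y\,N(\cdot,dy)$ and $\widetilde P^a=\int_{|y|<a} y\,\widetilde N(\cdot,dy)$ are $\sigma(Z)$-measurable, and then $G = Z + m - P^a - \widetilde P^a$ is as well. The same holds for $W$. Consequently, if $Z\perp W$ then every component of $Z$ is independent of every component of $W$, supplying all the cross pairs; the within-$Z$ and within-$W$ independences are already furnished by Theorem~\ref{thm:Levy-Ito}, so pairwise independence of all six follows.

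For the backward direction, the first step is to record that the component processes carry exactly the information of the Gaussian part and of the Poisson random measure restricted to disjoint jump-size regions: the jumps of $P^a$ reproduce $N$ on $[0,\infty)\times\{|y|\ge a\}$ and the jumps of $\widetilde P^a$ reproduce $N$ on $[0,\infty)\times\{|y|<a\}$ (the compensator being deterministic), so $\sigma(P^a,\widetilde P^a)=\sigma(N)$ and, by \eqref{eq:Levy-Ito}, $\sigma(Z)=\sigma(G,N)$; symmetrically for $W$. The target $Z\perp W$ therefore reduces to the mutual independence of the six component processes, since that yields $\sigma(G,P^a,\widetilde P^a)\perp\sigma(\Gamma,\Pi^a,\widetilde\Pi^a)$. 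I would establish this by factoring the joint characteristic functional of the six processes: the Gaussian contributions factor because independent Gaussian processes are jointly Gaussian (so pairwise uncorrelatedness is full independence), the jump contributions are handled through the exponential characteristic formulas of Theorem~\ref{thm:Poisson_random_measure}.5 together with the independence of Poisson integrals over disjoint regions from Theorem~\ref{thm:Poisson_random_measure}.3--4, and every remaining cross term is to be annihilated by the assumed pairwise independences.

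The main obstacle is precisely this upgrade from pairwise to mutual independence: for arbitrary random variables pairwise independence does not imply joint independence, so the argument cannot be purely formal and must exploit the infinite divisibility of the pieces. The leverage comes from the fact that the relevant characteristic functionals are exponentials whose exponents are \emph{additive} in the Gaussian covariance datum and in the Levy-measure datum of Theorem~\ref{thm:Levy-Khinchine}: pairwise independence forces the cross-covariance between any two Gaussian parts to vanish and forces the joint jump structure to place no mass off the coordinate axes (no simultaneous jumps), and an exponent that is a sum of single-component terms exponentiates to a product of single-component characteristic functionals. I would take particular care at the delicate point where the four jump components, being driven by Poisson random measures over pairwise disjoint regions, must be shown to combine into genuinely independent measures rather than merely pairwise independent ones; this is where the disjoint-support Poisson structure, and not generic pairwise independence, does the essential work.
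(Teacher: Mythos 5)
Your forward (``only if'') direction is correct and is essentially the paper's own argument: the paper likewise recovers $\Delta Z$, hence $P^a$ and $\widetilde{P}^a$, hence $G$, pathwise from $Z$ (and similarly for $W$), so that independence of $Z$ and $W$ propagates to all cross pairs, while the within-process pairs come from Theorem~\ref{thm:Levy-Ito}.

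The backward (``if'') direction is where the genuine gap lies, and you correctly sensed that the pairwise-to-mutual upgrade is the crux --- the paper simply declares this direction ``obvious.'' But the bridge you propose cannot be built from the stated hypotheses. The Levy--Khinchine leverage you invoke (a cross-covariance between Gaussian parts that pairwise independence kills, a joint Levy measure forced to sit on the coordinate axes) presupposes that the pair $(Z,W)$ --- equivalently the six components jointly --- is an additive process in $\R^2$, i.e.\ jointly infinitely divisible. The theorem assumes only that $Z$ and $W$ are \emph{separately} additive, with no constraint whatsoever on their joint law, so the data your argument needs do not exist. Worse, under the literal hypotheses the implication is false. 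Counterexample: let $P_1,P_2,P_3$ be the path-space laws of two Brownian motions and a rate-one Poisson process, choose measurable $f_i:\Omega_i\to\{\pm 1\}$ with $\int f_i\,dP_i=0$ (fair coins), and define the probability $Q=(1+f_1f_2f_3)\,P_1\otimes P_2\otimes P_3$. Under $Q$ every \emph{pair} of coordinates still has product law, so $Z=\omega_1$ and $W=\omega_2+\omega_3$ are additive processes whose six Levy--Ito components (with $a\in(0,1]$: $G=\omega_1$, $\Gamma=\omega_2$, $\Pi^a=\omega_3$, and $P^a=\widetilde{P}^a=\widetilde{\Pi}^a=0$) are pairwise independent. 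Yet $f_1$ is $\sigma(Z)$-measurable, $f_2f_3$ is $\sigma(W)$-measurable (the jumps of $W$ recover $\omega_3$, hence also $\omega_2=W-\omega_3$), and $\mathbf{E}_Q[f_1f_2f_3]=1\neq 0=\mathbf{E}_Q[f_1]\,\mathbf{E}_Q[f_2f_3]$, so $Z$ and $W$ are not independent. Relatedly, your parenthetical ``independent Gaussian processes are jointly Gaussian (so pairwise uncorrelatedness is full independence)'' is the same trap in miniature: it is true for two processes, but pairwise independent Gaussians (three or more) need not be jointly Gaussian.

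What survives is this: if ``pairwise independent'' is read as ``mutually independent'' --- which is what your forward direction actually delivers, since each triple is $\sigma(Z)$- resp.\ $\sigma(W)$-measurable --- then the converse is the trivial grouping property of independent $\sigma$-algebras, and none of your characteristic-functional machinery is needed; this is presumably what the paper means by ``obvious.'' Alternatively, if one adds the hypothesis that $(Z,W)$ is jointly additive, your Levy--Khinchine factorization does go through and proves a statement that is genuinely stronger than the grouping argument. As written, however, your proof of the ``if'' direction has an unfixable hole, because the statement it targets is false.
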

\begin{proof}
If $Z,W$ are independent, then $\{\Delta Z,W\}$, $\{Z, \Delta W\}$ and $\{\Delta Z, \Delta W\}$ are sets of independent processes. Consequently, $\{P^a, \widetilde{P}^a, W\}$, $\{Z, \Pi^a, \widetilde{\Pi}^a\}$, $\{P^a, \widetilde{P}^a, \Pi^a, \widetilde{\Pi}^a\}$, and so $\{Z, \Gamma\}$, $\{W, G\}$, $\{P^a, \widetilde{P}^a, \Gamma\}$, $\{G, \Pi^a, \widetilde{\Pi}^a\}$, $\{G,\Gamma\}$ are sets of pairwise independent additive processes. 

The other direction is obvious.
\end{proof}

\subsection{The decomposition of weak-IS channels}\label{sec:decomposition}

For each $t$ fixed, $\{X(t,u):u\in\R\}$ is an additive process. $X$ is associated with the 
jump process $\Delta X(t,u) = X(t,u) - X(t,u-)$, 
the random jump measure $$N(t,u,B) = |\{0<\frac{w}{u}\leq 1: \Delta X(t,w) \in B \}| = \sum_{0<\frac{w}{u}\leq 1} \1_B(\Delta X(t,w)) ,$$ for $u\neq 0$ and $N(t,0,B)=0$,
the compensated jumps random measure $$\widetilde{N}(t,u,B) = N(t,u,B) - \Exp{N(t,u,B)},$$ and
the Levy measure $$\nu(t,[w,u)\times B) = \left\{
\begin{array}{ll}
\Exp{N(t,u,B) - N(t,w,B)} &: 0\leq w<u \\
\Exp{N(t,u,B) + N(t,w,B)} &: w<0\leq u \\
\Exp{N(t,w,B) - N(t,u,B)} &: w<u\leq 0 \\
\end{array}\right.$$
We also use the notation $\nu_u(t,B)= \Exp{N(t,u,B)}$. 

By Theorem~\ref{thm:Levy-Ito}, $X$ has a Levy-Ito decomposition 
\begin{eqnarray}\label{eq:decomposition_X}
X(t,u) = - X_d(t,u) + X_c(t,u) + X_j(t,u) + \widetilde{X}_j(t,u)
\end{eqnarray}
where, for each $t\in\R$ fixed, 
\begin{enumerate}
\item $X_d$ is a nonrandom continuous function of $u$ with $X_d(t,0)=0$.
\item $X_c$ is a Gaussian process with the characteristic function 
$$\Exp{e^{i\gamma X_c(t,u)}} = \left\{
\begin{array}{ll} 
e^{- \alpha_t(u)\gamma^2/2} &; u\geq 0 \\
e^{ \alpha_t(u)\gamma^2/2} &; u<0 \\
\end{array}\right.$$ 
where for each $t\in\R$, $\alpha_t:\R\to\R$ is a continuous increasing function with $\alpha_t(0)=0$.
\item $X_j$ and $\widetilde{X}_j$ are explicitely given by
\begin{eqnarray*}
X_j(t,u) = \int_{|y|\geq a} y N(t,u,dy) \hspace{15mm}
\widetilde{X}_j(t,u) = \int_{|y|<a} y \widetilde{N}(t,u,dy)
\end{eqnarray*}
with the characteristic functions
\begin{eqnarray*}
\Exp{e^{i\gamma X_j(t,u)}} &=& \exp\left(\int_{|y|\geq a} (e^{iy\gamma}-1) \nu_u(t,dy) \right) \\ %\hspace{15mm}
\Exp{e^{i\gamma \widetilde{X}_j(t,u)}} &=& \exp\left(\int_{|y|<a} (e^{iy\gamma}-1-iy\gamma) \nu_u(t,dy) \right)
\end{eqnarray*}
respectively. We shall not explicitely indicate the dependence on $a>0$, since it will always be a fixed number in this paper.  
\end{enumerate}
In particular, $\Exp{X_c(t,u)}=\Exp{\widetilde{X}_j(t,u)}=0$. Since $\Exp{X(t,u)}=0$, we have 
\begin{eqnarray}\label{eq:Exp_Xj=Xd}
X_d(t,u) = \Exp{X_j(t,u)} = \int_{|y|\geq a} y\nu_u(t,dy).
\end{eqnarray}
Naturally, the stochastic integral operator $H$ has the decomposition 
\begin{eqnarray}\label{eq:decomposition_H}
H = -H_d + H_c + H_j + \widetilde{H}_j
\end{eqnarray}
By Theorem~\ref{thm:SIO}, each of the four operators in \eqref{eq:decomposition_H} are SIO, which satisfy \eqref{US_op} and \eqref{Cont_mean2_op}, with the integrated kernels given in the same order in \eqref{eq:decomposition_X}.
In particular, note that
\begin{eqnarray*}
H_df(t) &=& \int f(u)\int_{|y|\geq a} y\nu(t,dudy)\\
H_jf(t) &=& \sum_{u\in J_a} f(u) \Delta X(t,u)
\end{eqnarray*}
for a step function $f$, where $J_a = \{u\in\R: |\Delta X(t,u)|\geq a\}$. $H_df(t) = \Exp{H_jf(t)}$ by \eqref{eq:Exp_Xj=Xd}.

\begin{lemma}\label{lem:Characteristic_functions_X_increments}
For each $t\in\R$, the increments of $X_c$, $X_j$ and $\widetilde{X}_j$ have the following characteristic functions.
\begin{eqnarray*}
\Exp{e^{i\gamma (X_c(t,v)-X_c(t,u))}} &=& e^{-(\alpha_t(v) - \alpha_t(u))\gamma^2/2} \\
\Exp{e^{i\gamma (X_j(t,v) - X_j(t,u))}} &=& \exp\left(\int_u^v\int_{|y|\geq a} (e^{iy\gamma}-1) \nu(t,dwdy) \right) \\
\Exp{e^{i\gamma (\widetilde{X}_j(t,v) - \widetilde{X}_j(t,u))}} &=& \exp\left(\int_u^v\int_{|y|<a} (e^{iy\gamma}-1-iy\gamma) \nu(t,dwdy) \right)
\end{eqnarray*}
\end{lemma}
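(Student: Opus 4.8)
The plan is to exploit that, by Theorem~\ref{thm:Levy-Ito}, each of the three summands $X_c(t,\cdot)$, $X_j(t,\cdot)$ and $\widetilde{X}_j(t,\cdot)$ appearing in \eqref{eq:decomposition_X} is itself an additive process, and in particular has independent increments. All three identities then follow by the same factoring argument already carried out for the Gaussian process $G$ in the discussion preceding Theorem~\ref{thm:Independent_additive_processes}, combined with the explicit ``base point'' characteristic functions recorded just after \eqref{eq:decomposition_X} and the additivity of the Levy measure $\nu$. Throughout I would keep $t$ fixed.

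For the Gaussian part I would first treat $0\le u<v$. Writing $X_c(t,v)=X_c(t,u)+(X_c(t,v)-X_c(t,u))$ as a sum of increments over the disjoint intervals $[0,u)$ and $[u,v)$, independence of increments gives
$$\Exp{e^{i\gamma X_c(t,v)}} = \Exp{e^{i\gamma (X_c(t,v)-X_c(t,u))}}\,\Exp{e^{i\gamma X_c(t,u)}}.$$
Since each $\zeta_u$ is infinitely divisible (Theorem~\ref{thm:Levy-Khinchine}) the base factors never vanish, so substituting $\Exp{e^{i\gamma X_c(t,w)}}=e^{-\alpha_t(w)\gamma^2/2}$ for $w\ge 0$ and solving for the increment yields $e^{-(\alpha_t(v)-\alpha_t(u))\gamma^2/2}$. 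For the jump parts the identical factoring applies verbatim, and using $\nu_v(t,B)-\nu_u(t,B)=\nu(t,[u,v)\times B)$ (valid for $0\le u<v$) rewrites the difference of the two exponents as the double integrals $\int_u^v\int_{|y|\ge a}(e^{iy\gamma}-1)$ and $\int_u^v\int_{|y|<a}(e^{iy\gamma}-1-iy\gamma)$, respectively.

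Finally I would remove the restriction $0\le u<v$. Because the processes are defined on all of $\R$ with the sign convention of \eqref{eq:decomposition_X}, the cleanest route is to split the increment over $[u,v)$ at the origin whenever $u<0<v$, writing $X_c(t,v)-X_c(t,u)=(X_c(t,v)-X_c(t,0))+(X_c(t,0)-X_c(t,u))$ and observing that these are increments over the disjoint intervals $[0,v)$ and $[u,0)$; the case $u<v\le 0$ is handled the same way by splitting $[u,0)$ into $[u,v)$ and $[v,0)$. On each subinterval the relevant increment characteristic function is read off directly from the base formula together with its sign flip for negative argument (for instance $X_c(t,0)-X_c(t,u)=-X_c(t,u)$ has characteristic function $e^{\alpha_t(u)\gamma^2/2}$ when $u<0$, since the Gaussian factor is even in $\gamma$), and multiplying the independent factors reproduces $e^{-(\alpha_t(v)-\alpha_t(u))\gamma^2/2}$ together with the corresponding jump formulas for all $u<v$.

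I expect the only genuine obstacle to be precisely this bookkeeping across the origin: checking that the sign conventions built into \eqref{eq:decomposition_X} are exactly what is needed so that the accumulated variance (respectively the accumulated Levy mass) over $[u,v)$ emerges as $\alpha_t(v)-\alpha_t(u)$ (respectively $\nu(t,[u,v)\times\cdot\,)$) uniformly in the signs of $u$ and $v$. Everything else is the routine independence factoring already demonstrated for $G$.
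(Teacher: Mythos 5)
Your proof is correct and takes essentially the same route as the paper's: fix $t$, factor the characteristic function at the base point $0$ using independence of increments (e.g.\ $\Exp{e^{i\gamma X_c(t,v)}} = \Exp{e^{i\gamma(X_c(t,v)-X_c(t,u))}}\,\Exp{e^{i\gamma X_c(t,u)}}$), solve for the increment factor using the non-vanishing explicit base formulas, and rewrite the difference of exponents via $\nu_v(t,B)-\nu_u(t,B)=\nu(t,[u,v)\times B)$. You in fact go a bit further than the paper, which treats only $0\le u<v$ and declares the cases $u<0\le v$ and $u<v\le 0$ ``similar,'' whereas you carry out the sign bookkeeping across the origin explicitly.
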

\begin{proof}
We shall consider only the case $0\leq u<v$, since the cases $u< 0\leq v$ and $u<v\leq 0$ can be handled similarly.

For each $t$, $X_c(t,.)$ has independent increments. Consequently, 
\begin{eqnarray*}
e^{-\alpha_t(v)\gamma^2/2} 
&=& \Exp{e^{i\gamma (X_c(t,v)-X_c(t,0))}} \\
&=& \Exp{e^{i\gamma (X_c(t,v)-X_c(t,u))}} \Exp{e^{i\gamma (X_c(t,u)-X_c(t,0))}} \\
&=& \Exp{e^{i\gamma (X_c(t,v)-X_c(t,u))}} e^{-\alpha_t(u)\gamma^2/2}.
\end{eqnarray*}
Remember that $X_c(t,0)=0$ a.s.

Similarly, $X_j(t,.)$ and $\widetilde{X}_j(t,.)$ have independent increments. Therefore, the characteristic functions of their increments are
\begin{eqnarray*}
\exp\left(\int_{|y|\geq a} (e^{iy\gamma}-1) \nu_v(t,dy)\right)
&=& \Exp{e^{i\gamma (X_j(t,v) - X_j(t,0))}} \\
&=& \Exp{e^{i\gamma (X_j(t,v) - X_j(t,u))}}\Exp{e^{i\gamma (X_j(t,u) - X_j(t,0))}} \\
&=& \Exp{e^{i\gamma (X_j(t,v) - X_j(t,u))}}\exp\left(\int_{|y|\geq a} (e^{iy\gamma}-1) \nu_u(t,dy)\right)
\end{eqnarray*}
Remeber that $X_j(t,0)=0$ a.s., and $\nu(t,[u,v)\times B) = \nu_v(t,B) - \nu_u(t,B)$. 

The third characteristic function is obtained similarly.
\end{proof}

Next, similar to the definition of $\mu_t$ in \eqref{def:mu}, we define 
\begin{eqnarray*}
\mu^c_t([u,v)) &=& \Exp{|X_c(t,u)-X_c(t,v)|^2} \\
\mu^j_t([u,v)) &=& \Exp{|X_j(t,u) - X_d(t,u) + X_d(t,v) - X_j(t,v)|^2} \\
\widetilde{\mu}^j_t ([u,v)) &=& \Exp{|\widetilde{X}_j(t,u)-\widetilde{X}_j(t,v)|^2} 
\end{eqnarray*}
and proceed as in Section~\ref{sec:SIO_rep_X}. $\mu^c_t$, $\mu^j_t$ and $\widetilde{\mu}^j_t$ are $\sigma$-finite Borel measures that satisfy Lemma~\ref{lemma:L2_mu}. Then, by Theorem~\ref{thm:SIO_2}
$H_cf(t)$ is well-defined for every $f\in L^2(\R,\mu^c_t)$ and satisfies 
$$\Exp{|H_cf(t)|^2} = \|f\|_{L^2(\R,\mu^c_t)}^2,$$ 
$H_jf(t)$ is well-defined for every $f\in L^2(\R,\mu^j_t)$ and satisfies 
$$\Exp{|H_jf(t) - H_df(t)|^2} = \|f\|_{L^2(\R,\mu^j_t)}^2,$$
$\widetilde{H}_jf(t)$ is well-defined for every $f\in L^2(\R,\widetilde{\mu}^j_t)$ and satisfies 
$$\Exp{|\widetilde{H}_jf(t)|^2} = \|f\|_{L^2(\R,\widetilde{\mu}^j_t)}^2.$$
Since for each $t\in\R$ fixed, $X_c$, $X_j - X_d$, $\widetilde{X}_j$ are independent additive processes, $$\mu_t = \mu^c_t + \mu^j_t + \widetilde{\mu}^j_t.$$ As a result, $L^2(\R,\mu_t) = L^2(\R,\mu^c_t) \cap L^2(\R,\mu^j_t) \cap L^2(\R,\widetilde{\mu}^j_t)$. 

\begin{lemma}\label{lemma:mu_decomp}
Let $t\in\R$ be fixed. For any Borel set $B\subseteq\R$, 
\begin{eqnarray*}
\mu^c_t(B) &=& \int_B \alpha_t(du) \\
\mu^j_t(B) &=& \int_B \int_{|y|\geq a} y^2\nu(t,dudy) \\
\widetilde{\mu}^j_t (B) &=& \int_B \int_{|y|<a} y^2\nu(t,dudy) \\
\end{eqnarray*}
Furthermore, if for each $t$, $X(t,.)$ 
is a Levy process, then 
\begin{eqnarray*}
\mu^c_t(B) &=& |B|\alpha_t(1) \\
\mu^j_t(B) &=& |B| \int_{|y|\geq a} y^2\nu_1(t,dy) \\
\widetilde{\mu}^j_t (B) &=& |B| \int_{|y|<a} y^2\nu_1(t,dy) \\
\end{eqnarray*}
 where $|B|$ is the Lebesgue measure of $B$.
\end{lemma}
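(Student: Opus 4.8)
The plan is to reduce the statement to computing the variance of an increment over a half-open interval, and then to pass from intervals to arbitrary Borel sets by the uniqueness of Borel measures. Each of $\mu^c_t$, $\mu^j_t$, $\widetilde{\mu}^j_t$ is constructed exactly as $\mu_t$ was in Section~\ref{sec:SIO_rep_X}, hence each is a $\sigma$-finite Borel measure; moreover the already-established decomposition $\mu_t = \mu^c_t + \mu^j_t + \widetilde{\mu}^j_t$, together with the finiteness of $\mu_t$ on bounded intervals, shows that each component is finite on bounded intervals. On the other side, $B\mapsto\int_B\alpha_t(du)$, $B\mapsto\int_B\int_{|y|\geq a}y^2\nu(t,dudy)$ and $B\mapsto\int_B\int_{|y|<a}y^2\nu(t,dudy)$ are themselves Borel measures. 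Since the half-open intervals $[u,v)$ form a $\pi$-system generating the Borel $\sigma$-algebra, it suffices to check the three identities on such intervals, after which the uniqueness theorem for $\sigma$-finite measures extends them to every Borel $B$.

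For the interval values I would read the variances off the characteristic functions supplied by Lemma~\ref{lem:Characteristic_functions_X_increments}. The increment $X_c(t,v)-X_c(t,u)$ is centered Gaussian with variance $\alpha_t(v)-\alpha_t(u)$, so $\mu^c_t([u,v)) = \alpha_t(v)-\alpha_t(u) = \int_{[u,v)}\alpha_t(du)$. For the jump parts, let $\phi=e^{\psi}$ be the relevant characteristic function, with $\psi$ the exponent in Lemma~\ref{lem:Characteristic_functions_X_increments}. Because the component increments have finite second moment (again by $\mu_t=\mu^c_t+\mu^j_t+\widetilde{\mu}^j_t$), each $\phi$ is twice continuously differentiable at $0$, so $\Exp{Z}=-i\phi'(0)$ and $\Exp{Z^2}=-\phi''(0)$. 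Differentiating under the integral, in both jump cases $\psi''(0)=-\int y^2\Lambda(dy)$, where $\Lambda$ is the restriction of $\nu(t,\cdot)$ to $[u,v)\times\{|y|\geq a\}$ (resp. $[u,v)\times\{|y|<a\}$); the first-moment contributions cancel in $\Exp{Z^2}-(\Exp{Z})^2$, leaving the variance equal to $\int y^2\Lambda(dy)$. For $\mu^j_t$ this is $\int_{[u,v)}\int_{|y|\geq a}y^2\nu(t,dwdy)$, once one notes that $X_d=\Exp{X_j}$ makes $X_j-X_d$ the centering of $X_j$; for $\widetilde{\mu}^j_t$ it is $\int_{[u,v)}\int_{|y|<a}y^2\nu(t,dwdy)$. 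The Lévy case then follows by substituting the stationary-increment forms $\alpha_t(u)=u\alpha_t(1)$ and $\nu_u(t,B)=u\nu_1(t,B)$ from Theorem~\ref{thm:Levy-Khinchine}, so that $\nu(t,[u,v)\times dy)=(v-u)\nu_1(t,dy)$ collapses the inner integral to the factor $|B|$.

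I expect the main obstacle to be justifying the differentiation under the integral used to extract the second moment: one must know a priori that the increments have finite variance so that $\phi\in C^2$ and the moment formulas hold. This finiteness is not an extra hypothesis, however, but exactly the content of $\mu_t=\mu^c_t+\mu^j_t+\widetilde{\mu}^j_t$ with $\mu_t$ finite on bounded intervals; once this domination is invoked, the differentiation of the explicit exponential forms and the cancellation of the mean terms are routine, and the \emph{only} genuine work is the measure-uniqueness extension described in the first paragraph.
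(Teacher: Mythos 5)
Your proposal is correct and takes essentially the same route as the paper's own proof: both read the increment variances off the characteristic functions of Lemma~\ref{lem:Characteristic_functions_X_increments} on half-open intervals (with $X_d=\Exp{X_j(t,\cdot)}$ providing the centering for $\mu^j_t$), extend to all Borel sets because two Borel measures agreeing on intervals coincide, and settle the L\'evy case by substituting $\alpha_t(u)=u\,\alpha_t(1)$ and $\nu(t,[u,v)\times A)=|v-u|\,\nu_1(t,A)$. Your extra care about the $\pi$-system uniqueness step, the $C^2$ regularity of the characteristic function, and the finiteness of $\int y^2\,\nu$ (via $\mu_t=\mu^c_t+\mu^j_t+\widetilde{\mu}^j_t$) simply makes explicit what the paper leaves implicit.
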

\begin{proof}
By Lemma~\ref{lem:Characteristic_functions_X_increments}, the variance of the increment $X_c(t,u)-X_c(t,v)$ is $$\mu^c_t([u,v)) = \Exp{|X_c(t,u)-X_c(t,v)|^2} = \alpha_t(v) - \alpha_t(u) = \int_{[u,v)} \alpha_t(dw).$$ Since both the left and the right hand side of above equality are Borel measures, and since they are equal on the intervals, they must be equal for all Borel sets.

Similarly, the variance of the increment $X_j(t,u)-X_j(t,v)$ is 
$$
\mu^j_t([u,v)) 
= \Exp{|X_j(t,u)-X_j(t,v)|^2} - |X_d(t,u)-X_d(t,v)|^2 
= \int_u^v \int_{|y|\geq a} y^2\nu(t,dwdy).
$$ 
The rest follows similarly. We obtain $\widetilde{\mu}^j_t (B)$ similarly as well.

Second, if $X(t,.)$ is a Levy process, then $\alpha_t(u) = u\alpha_t(1)$ and $$\nu(t,[u,v)\times A) = |v-u|\nu(t,[0,1)\times A) = |v-u|\nu_1(t,A)$$ for any Borel set $A\subseteq\R\backslash\{0\}$. The condition for the Levy measure could be written more generally as $\nu(t,B\times A) = |B|\nu_1(t,A)$. The result follows from here.
\end{proof}

\begin{theorem}\label{thm:Characteristic_functions}
For each $t\in\R$ and $f\in L^2(\R,\mu_t)$, $H_df(t)$ is deterministic; $H_cf(t)$, $H_jf(t)$ and $\widetilde{H}_jf(t)$ are independent random variables with the following characteristic functions
\begin{eqnarray*}
\Exp{e^{i\gamma H_cf(t)}} &=& \exp{\left(-\frac{\gamma^2}{2} \int f^2(u)\alpha_t(du)\right)} \\
\Exp{e^{i\gamma H_jf(t)}} &=& \exp{\left(\iint_{|y|\geq a} (e^{i\gamma yf(u)} - 1) \nu(t,dudy)\right)} \\
\Exp{e^{i\gamma \widetilde{H}_jf(t)}} &=& \exp{\left(\int\int_{|y|<a} (e^{i\gamma yf(u)} - 1-i\gamma yf(u)) \nu(t,dudy)\right)}
\end{eqnarray*}
\end{theorem}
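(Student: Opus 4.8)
The plan is to establish the three characteristic-function formulas first for step functions by a direct factorization, and then to extend to arbitrary $f\in L^2(\R,\mu_t)$ by density, handling the independence and the deterministic claim through the L\'evy--It\^o structure already in place.

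First I would fix $t$ and take a step function $f=\sum_k c_k\1_{[u_{k-1},u_k)}$, so that $H_cf(t)=\sum_k c_k(X_c(t,u_k)-X_c(t,u_{k-1}))$, and similarly for $H_j$ and $\widetilde{H}_j$. Since each of $X_c(t,\cdot)$, $X_j(t,\cdot)$, $\widetilde{X}_j(t,\cdot)$ is an additive process, its increments over the disjoint intervals $[u_{k-1},u_k)$ are independent, so the characteristic function factors over $k$. Applying Lemma~\ref{lem:Characteristic_functions_X_increments} to each factor and using that $f\equiv c_k$ on $[u_{k-1},u_k)$, the $k$-th factor for $H_c$ is $\exp(-\tfrac{\gamma^2}{2}c_k^2(\alpha_t(u_k)-\alpha_t(u_{k-1})))$ and the $k$-th factor for $H_j$ is $\exp(\int_{u_{k-1}}^{u_k}\int_{|y|\geq a}(e^{i\gamma yf(u)}-1)\,\nu(t,du\,dy))$, with the analogous compensated integrand for $\widetilde{H}_j$. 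Summing the exponents over $k$ and invoking Lemma~\ref{lemma:mu_decomp} to rewrite $\sum_k c_k^2(\alpha_t(u_k)-\alpha_t(u_{k-1}))=\int f^2\,\alpha_t(du)$ yields exactly the three claimed formulas for step functions.

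For independence and the deterministic claim I would note that $X_d(t,\cdot)$ is nonrandom, hence $H_df(t)$ is deterministic and equals $\Exp{H_jf(t)}$ by \eqref{eq:Exp_Xj=Xd}. By the L\'evy--It\^o decomposition (Theorem~\ref{thm:Levy-Ito}), for fixed $t$ the processes $X_c(t,\cdot)$, $X_j(t,\cdot)-X_d(t,\cdot)$ and $\widetilde{X}_j(t,\cdot)$ are independent additive processes; since $H_cf(t)$, $H_jf(t)-H_df(t)$ and $\widetilde{H}_jf(t)$ are $L^2(\Omega,P)$-limits of measurable functions of the respective processes, the three are independent, and the deterministic shift by $H_df(t)$ leaves $H_jf(t)$ independent of the other two.

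The extension to general $f\in L^2(\R,\mu_t)=L^2(\R,\mu^c_t)\cap L^2(\R,\mu^j_t)\cap L^2(\R,\widetilde{\mu}^j_t)$ proceeds by choosing step functions $f_n\to f$ in all three norms. On the probabilistic side, the component isometries obtained from Theorem~\ref{thm:SIO_2} give $H_cf_n(t)\to H_cf(t)$, $\widetilde{H}_jf_n(t)\to\widetilde{H}_jf(t)$ and $H_jf_n(t)-H_df_n(t)\to H_jf(t)-H_df(t)$ in $L^2(\Omega,P)$, hence convergence in distribution and pointwise convergence of the characteristic functions. On the analytic side one shows the three exponents converge: the Gaussian exponent because $\int f_n^2\,\alpha_t\to\int f^2\,\alpha_t$, and the jump exponents via the bounds $|e^{i\gamma yf_n(u)}-e^{i\gamma yf(u)}|\leq|\gamma||y|\,|f_n(u)-f(u)|$ for the large jumps and $|e^{iz}-1-iz|\leq z^2/2$ for the compensated small jumps, each controlled by Cauchy--Schwarz against $\mu^j_t$ and $\widetilde{\mu}^j_t$. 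The main obstacle is precisely this convergence of the jump exponents: one must dominate the integrands uniformly in $n$ over a $u$-range that need not be bounded, which is where the finiteness of $\int_{|y|\geq a}y^2\nu$ and $\int_{|y|<a}y^2\nu$ encoded in $\mu^j_t$ and $\widetilde{\mu}^j_t$ is essential, together with the deterministic continuity of $H_d$ needed to pass $H_df_n(t)\to H_df(t)$.
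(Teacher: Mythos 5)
Your overall architecture is the same as the paper's: prove the three formulas for step functions by factoring the characteristic function over disjoint intervals via independence of increments together with Lemma~\ref{lem:Characteristic_functions_X_increments}, then extend to $f\in L^2(\R,\mu_t)$ by density, getting convergence in distribution from the component $L^2(\Omega,P)$-isometries and, separately, convergence of the exponents. Your explicit treatment of the independence claim and of the deterministic nature of $H_df(t)$ (via Theorem~\ref{thm:Levy-Ito} and passage to $L^2$-limits) is actually an improvement, since the paper's proof computes the characteristic functions but never argues independence. However, there is a genuine gap in your extension step for $H_j$.

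For the large jumps you compare the \emph{uncompensated} exponents via $|e^{i\gamma yf_n(u)}-e^{i\gamma yf(u)}|\leq|\gamma||y||f_n(u)-f(u)|$ and claim this is ``controlled by Cauchy--Schwarz against $\mu^j_t$.'' It is not. Cauchy--Schwarz gives
\begin{equation*}
\iint_{|y|\geq a}|y|\,|f_n-f|\ \nu(t,dudy)\ \leq\ \left(\iint_{|y|\geq a}y^2|f_n-f|^2\ \nu(t,dudy)\right)^{1/2}\Bigl(\nu\bigl(t,E\times\{y:|y|\geq a\}\bigr)\Bigr)^{1/2},
\end{equation*}
where $E=\{u:f_n(u)\neq f(u)\}$. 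The first factor is $\|f_n-f\|_{L^2(\R,\mu^j_t)}\to 0$, but the second is the total large-jump intensity over $E$, and this need not be finite: $\mu^j_t$ and $\nu(t,\cdot\times\{|y|\geq a\})$ are only $\sigma$-finite, and a general $f\in L^2(\R,\mu_t)$ need not be supported in a set of finite $\nu$-mass. Indeed, for such $f$ the uncompensated integral $\iint_{|y|\geq a}(e^{i\gamma yf(u)}-1)\,\nu(t,dudy)$ need not even converge absolutely, so no linear estimate of this kind can close the argument. The paper avoids exactly this by keeping the large jumps \emph{compensated} throughout: it proves convergence of $\Exp{e^{i\gamma(H_jf(t)-H_df(t))}}$ in \eqref{eq:Hj1} and of the compensated exponent $\iint_{|y|\geq a}(e^{i\gamma yf}-1-i\gamma yf)\,\nu(t,dudy)$ in \eqref{eq:Hj2}, whose integrand differences are quadratically small and hence controlled by quantities vanishing with $\|f_n-f\|_{L^2(\R,\mu^j_t)}$ (together with a Cauchy--Schwarz cross term), and only recombines with the deterministic part at the very end. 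You gesture at this repair---you compensate the small jumps, and you note that $H_df_n(t)\to H_df(t)$ is ``needed''---but that last convergence is precisely the same problematic linear integral $\iint_{|y|\geq a}y(f_n-f)\,\nu$, so your argument is circular at that point. The missing idea is to treat $H_j$ in the compensated form $H_j-H_d$ all the way through, exactly as you already do for $\widetilde{H}_j$.
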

\begin{proof}
We prove the theorem in two steps. We first prove it for step functions, then use the density argument by Lemma~\ref{lemma:L2_mu} to obtain the general result. 

Let $u_0<u_1<\dots<u_n$ and let $\phi = \sum_{k=1}^n c_k \1_{[u_{k-1},u_k)}$ be a step function. By Lemma~\ref{lem:Characteristic_functions_X_increments} and by the independence of the increments of $X_c$,
\begin{eqnarray*}
\Exp{e^{i\gamma H_c\phi(t)}} 
&=& \prod_{k=1}^n \Exp{e^{i\gamma c_k (X_c(t,u_k)-X_c(t,u_{k-1}))}} \\
&=& \prod_{k=1}^n e^{-(\alpha_t(u_k) - \alpha_t(u_{k-1}))c_k^2\gamma^2/2} \\
&=& \exp{\left(-\frac{\gamma^2}{2} \int \phi^2(u)\alpha_t(du)\right)}.
\end{eqnarray*}
Similarly, 
\begin{eqnarray*}
\Exp{e^{i\gamma H_j\phi(t)}} 
&=& \prod_{k=1}^n \Exp{e^{i\gamma c_k (X_j(t,u_k)-X_j(t,u_{k-1}))}} \\
&=& \prod_{k=1}^n \exp\left(\int_{u_{k-1}}^{u_k} \int_{|y|\geq a} (e^{iy\gamma c_k}-1) \nu(t,dudy) \right) \\
&=& \exp{\left(\iint_{|y|\geq a} (e^{i\gamma y\phi(u)} - 1) \nu(t,dudy)\right)}
\end{eqnarray*}
and
\begin{eqnarray*}
\Exp{e^{i\gamma \widetilde{H}_j\phi(t)}} 
&=& \prod_{k=1}^n \Exp{e^{i\gamma c_k (\widetilde{X}_j(t,u_k) - \widetilde{X}_j(t,u_{k-1}))}} \\
&=& \prod_{k=1}^n \exp\left(\int_{u_{k-1}}^{u_k} \int_{|y|\geq a} (e^{iy\gamma c_k}-1 - iy\gamma c_k) \nu(t,dudy) \right) \\
&=& \exp{\left(\iint_{|y|\geq a} (e^{i\gamma y\phi(u)} - 1 - iy\gamma \phi(u)) \nu(t,dudy)\right)}
\end{eqnarray*}

Second, by Lemma~\ref{lemma:L2_mu}, for every $f\in L^2(\R,\mu_t) = L^2(\R,\mu^c_t) \cap L^2(\R,\mu^j_t) \cap L^2(\R,\widetilde{\mu}^j_t)$, there is a sequence $(\phi_n)_{n\in\N}$ of step function such that 
\begin{eqnarray*}
\Exp{|H_cf(t)-H_c\phi_n(t)|^2} &=& \int |f(u) - \phi_n(u)|^2 \mu^c_t(du) \\
\Exp{|H_jf(t)-H_j\phi_n(t)|^2} - |H_df(t)-H_d\phi_n(t)|^2 &=& \int |f(u) - \phi_n(u)|^2 \mu^j_t(du) \\
\Exp{|\widetilde{H}_jf(t)-\widetilde{H}_j\phi_n(t)|^2} &=& \int |f(u) - \phi_n(u)|^2 \widetilde{\mu}^j_t(du)
\end{eqnarray*}
and these quantities tend to zero as $n\to\infty$. For the pair $(H_c,\mu_t^c)$,
\begin{eqnarray}
\left| \Exp{e^{i\gamma H_cf(t)}} - \Exp{e^{i\gamma H_c\phi_n(t)}} \right|
&\leq& \Exp{\left|e^{i\gamma H_cf(t)} - e^{i\gamma H_c\phi_n(t)} \right| } \nonumber \\
&\leq& |\gamma| \Exp{|H_cf(t) - H_c\phi_n(t)| } \nonumber \\
&\leq& |\gamma| \Exp{|H_cf(t) - H_c\phi_n(t)|^2 }^{1/2} \label{eq:Hc1}
\end{eqnarray}
On the other hand, by Lemma~\ref{lemma:mu_decomp} $\mu^c_t(du) = \alpha_t(du)$. Consequently,
\begin{eqnarray}
&& \lim_{n\to\infty} \int |f(u) - \phi_n(u)|^2 \alpha_t(du) = 0 \nonumber \\
&&\hspace{5mm} \Rightarrow \lim_{n\to\infty}\int \phi_n^2(u) \alpha_t(du) = \int f^2(u) \alpha_t(du) \nonumber \\
&&\hspace{5mm} \Rightarrow \lim_{n\to\infty} \exp{\left(-\frac{\gamma^2}{2} \int \phi_n^2(u)\alpha_t(du)\right)} = \exp{\left(-\frac{\gamma^2}{2} \int f^2(u)\alpha_t(du)\right)} \label{eq:Hc2}
\end{eqnarray}
By \eqref{eq:Hc1} and \eqref{eq:Hc2}
\begin{eqnarray*}
\Exp{e^{i\gamma H_cf(t)}} 
&=& \lim_{n\to\infty} \Exp{e^{i\gamma H_c\phi_n(t)}} \\
&=& \lim_{n\to\infty} \exp{\left(-\frac{\gamma^2}{2} \int \phi_n^2(u)\alpha_t(du)\right)} \\
&=& \exp{\left(-\frac{\gamma^2}{2} \int f^2(u)\alpha_t(du)\right)}
\end{eqnarray*}

For the pair $(H_j,\mu_t^j)$, we have 
\begin{eqnarray}
&& \lim_{n\to\infty} \left| \Exp{e^{i\gamma H_jf(t)}}e^{-i\gamma H_df(t)} - \Exp{e^{i\gamma H_j\phi_n(t)}} e^{-i\gamma H_d\phi_n(t)} \right| \nonumber \\
&=& \lim_{n\to\infty} \left| \Exp{e^{i\gamma (H_jf(t)-H_df(t))}} - \Exp{e^{i\gamma (H_j\phi_n(t)-H_d\phi_n(t))}} \right| \nonumber \\
&\leq& |\gamma| \lim_{n\to\infty} \Exp{|(H_jf(t)-H_df(t)) - (H_j\phi_n(t)-H_d\phi_n(t))|^2 }^{1/2} \nonumber \\
&=& 0 \label{eq:Hj1}.
\end{eqnarray}
On the other hand,
\begin{eqnarray*}
&& \left| \iint_{|y|\geq a} (e^{i\gamma yf(u)}-1-i\gamma yf(u))\nu(t,dudy)
 - \iint_{|y|\geq a} (e^{i\gamma y\phi_n(u)}-1-i\gamma y\phi_n(u))\nu(t,dudy) \right| \\
&&\hspace{19mm} \leq \iint_{|y|\geq a} \gamma^2|f(u)-\phi_n(u)|^2y^2 \nu(t,dudy) ,
\end{eqnarray*}
and by Lemma~\ref{lemma:mu_decomp},
\begin{eqnarray*}
\int |f(u) - \phi_n(u)|^2 \mu^j_t(du) = \iint_{|y|\geq a} |f(u) - \phi_n(u)|^2y^2\nu(t,dudy).
\end{eqnarray*}
Therefore,
\begin{eqnarray}\label{eq:Hj2}
\lim_{n\to\infty} \iint_{|y|\geq a} (e^{i\gamma y\phi_n(u)}-1-i\gamma y\phi_n(u))\nu(t,dudy) 
=\iint_{|y|\geq a} (e^{i\gamma yf(u)}-1-i\gamma yf(u))\nu(t,dudy)
\end{eqnarray}
By \eqref{eq:Hj1} and \eqref{eq:Hj2}
\begin{eqnarray*}
\Exp{e^{i\gamma H_jf(t)}} 
&=& \lim_{n\to\infty} \Exp{e^{i\gamma H_j\phi_n(t)}} e^{i\gamma H_d(f(t)-\phi_n(t))}\\
&=& \lim_{n\to\infty} \exp{\left(\iint_{|y|\geq a} (e^{i\gamma y\phi_n(u)} - 1) \nu(t,dudy)\right)} \exp{\left(i\gamma \iint_{|y|\geq a}(f(u)-\phi_n(u))y \nu(t,dudy)\right)} \\
&=& \exp{\left(\iint_{|y|\geq a} (e^{i\gamma yf(u)} - 1) \nu(t,dudy)\right)}
\end{eqnarray*}

Similar calculation gives the result for the pair $(\widetilde{H}_j,\widetilde{\mu}_t^j)$.
\end{proof}

For each $t$ fixed, $\{Y(t,u):u\in\R\}$ is an additive process. Thus, by Theorem~\ref{thm:Levy-Ito} and \eqref{eq:decomposition_X}, it has the Levy-Ito decomposition 
\begin{eqnarray}\label{eq:decomposition_Y}
Y(t,u) = - Y_d(t,u) + Y_c(t,u) + Y_j(t,u) + \widetilde{Y}_j(t,u)
\end{eqnarray}
where the processes in \eqref{eq:decomposition_Y} are the integrated impulse-responses for the SIO $H_d$, $H_c$, $H_j$ and $\widetilde{H}_j$ in the same order. 

Similar to the definition of $\mu_t$ in \eqref{def:rho}, we define 
\begin{eqnarray*}
\rho^c_{s,t}([u,v)) &=& \Exp{ (Y_c(t,v)-Y_c(t,u))(Y_c(s,v)-Y_c(s,u)) } \\
\rho^j_{s,t}([u,v)) &=& \Exp{ (Y_j(t,v)-Y_j(t,u))(Y_j(s,v)-Y_j(s,u)) } \\&&\hspace{10mm}
- (Y_d(t,v)-Y_d(t,u))(Y_d(s,v)-Y_d(s,u)) \\
\widetilde{\rho}^j_{s,t}([u,v)) &=& \Exp{ (\widetilde{Y}_j(t,v)-\widetilde{Y}_j(t,u))(\widetilde{Y}_j(s,v)-\widetilde{Y}_j(s,u))  }
\end{eqnarray*}
$\rho^c_{s,t}, \rho^j_{s,t}$ and $\widetilde{\rho}^j_{s,t}$ satisfy Lemma~\ref{lem:rho}.
\begin{theorem}\label{thm:IS_decomposition}
If $H$ satisfies the IS property, then $H_c, H_j$ and $\widetilde{H}_j$ also satisfy the IS property. Furthermore, if $\rho^c_{s,t}, \rho^j_{s,t}, \widetilde{\rho}^j_{s,t}$ are the correlation measures of $H_c, H_j, \widetilde{H}_j$ respectively, then 
\begin{eqnarray}\label{eq:IS_decomposition_rho}
\rho_{s,t} = \rho^c_{s,t} + \rho^j_{s,t} + \widetilde{\rho}^j_{s,t}
\end{eqnarray}
\end{theorem}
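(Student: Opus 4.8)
The plan is to fix $s,t\in\R$ throughout and to reduce both claims to the Lévy--Itô decomposition \eqref{eq:decomposition_Y} of the additive processes $Y(t,\cdot)$ and $Y(s,\cdot)$. For the first claim, that $H_c,H_j,\widetilde H_j$ again satisfy IS, I would fix disjoint delay intervals $[a,b)$ and $[c,d)$ and observe that every component increment over a delay interval is a measurable functional of the $Y$-increments over that same interval: the jump parts $Y_j(t,b)-Y_j(t,a)$ and $\widetilde Y_j(t,b)-\widetilde Y_j(t,a)$ are assembled from the jumps of $Y(t,\cdot)$ lying in $[a,b)$ via Definition~\ref{def:Poisson-random-measure} and \eqref{def:poisson_integral_1}, and $Y_c(t,b)-Y_c(t,a)$ is what remains after subtracting these jumps and the deterministic drift $Y_d$. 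Hence each of them is measurable with respect to the $\sigma$-algebra $\mathcal G^t_{[a,b)}$ generated by the increments of $Y(t,\cdot)$ inside $[a,b)$. Any finite family of such increments for time $t$ inside $[a,b)$ together with any finite family for time $s$ inside $[c,d)$ are increments over pairwise disjoint delay intervals, so the IS hypothesis makes them jointly independent; a $\pi$--$\lambda$ argument upgrades this to independence of $\mathcal G^t_{[a,b)}$ and $\mathcal G^s_{[c,d)}$, whence the three component increments inherit IS.

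For the second claim I would first note that, because $\Exp{Y(\tau,v)-Y(\tau,u)}=0$, identity \eqref{eq:IS_decomposition_rho} is precisely the additivity of covariance across the three components. Substituting \eqref{eq:decomposition_Y} into $\rho_{s,t}([u,v))$ and dropping the deterministic $Y_d$ expands the covariance into nine pieces. The three diagonal pieces are exactly $\rho^c_{s,t}$, $\rho^j_{s,t}$ and $\widetilde\rho^j_{s,t}$, where the subtracted $Y_dY_d$ term in the definition of $\rho^j_{s,t}$ is accounted for by $\Exp{Y_j(\tau,v)-Y_j(\tau,u)}=Y_d(\tau,v)-Y_d(\tau,u)$, the $Y$-analogue of \eqref{eq:Exp_Xj=Xd}. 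The whole statement therefore reduces to showing that the six off-diagonal cross-covariances vanish. To handle them uniformly I would view $W_u=(Y(t,u),Y(s,u))$ as a two-dimensional additive process in $u$, whose increments over disjoint delay intervals are independent by IS, and apply the vector analogue of the Lévy--Itô decomposition, so that the cross terms become covariances between pieces of $W$ built from different features of its sample paths.

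The four cross terms that pair the Gaussian part of one time with a jump part of the other vanish because in the Lévy--Itô decomposition of $W$ the continuous Gaussian component is independent of the whole Poisson jump structure (Theorem~\ref{thm:Levy-Ito}), and $Y_c(t,\cdot),Y_c(s,\cdot)$ are the coordinate projections of that Gaussian component. The main obstacle is the remaining pair, $\mathrm{Cov}(Y_j(t,v)-Y_j(t,u),\,\widetilde Y_j(s,v)-\widetilde Y_j(s,u))$ and its mirror with $t,s$ interchanged, which couple the large-jump part of one time with the small-jump part of the other. Expressed through the jump measure of $W$ these equal $\int_{|z_1|\ge a,\ 0<|z_2|<a} z_1z_2\,\Pi(dz)$, and since the regions $\{|z_1|\ge a\}$ and $\{0<|z_2|<a\}$ overlap, the disjoint-support independence of Theorem~\ref{thm:Poisson_random_measure}.4 does not apply directly. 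The crux is thus to prove that these mixed large/small-jump cross-covariances vanish, i.e.\ that $H_j$ and $\widetilde H_j$ are independent not merely at a single time (where it is Theorem~\ref{thm:Independent_additive_processes}) but jointly across times; I expect this decoupling of jump-size classes at a common delay to be the hard point, and to be exactly where the full force of the IS hypothesis, beyond mere uncorrelated scattering, must be invoked.
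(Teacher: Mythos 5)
Your handling of the first claim is correct and is in substance the paper's own argument: the paper obtains IS for $H_c,H_j,\widetilde H_j$ by applying Theorem~\ref{thm:Independent_additive_processes} to the increment processes of $Y(t,\cdot)$ on $[u_2,u_1)$ and of $Y(s,\cdot)$ on the disjoint interval $[u_4,u_3)$, which is exactly your measurability-plus-$\pi$--$\lambda$ argument packaged as a citation. For the second claim, however, your proposal is not a proof: you reduce \eqref{eq:IS_decomposition_rho} to the vanishing of six cross-covariances, dispose of the four Gaussian-versus-jump ones, and then explicitly leave open the two mixed large/small-jump terms, hoping that ``the full force of IS'' will kill them. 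That step cannot be carried out, because it is false. The IS hypothesis of Definition~\ref{def:WSSUS_SIO} constrains increments only over \emph{disjoint} delay intervals, whereas the cross terms in $\rho_{s,t}([u,v))$ pair increments of $Y(t,\cdot)$ and $Y(s,\cdot)$ over the \emph{same} interval, about which IS says nothing; a simultaneous jump of the two paths may well have size $\geq a$ in one coordinate and size $<a$ in the other, and then the cross term you isolated survives.

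Concretely: let $N$ be a standard Poisson process, let $a$ be the fixed truncation level, and define $Hf(t)=c(t)\int f(t-u)\,(N(du)-du)$ with $c(t_0)=2a$ and $c(s_0)=a/2$ for two times $t_0\neq s_0$. Then $X(t,u)=c(t)(N_t-N_{t-u}-u)$ and $Y(t,u)=c(t)(N_u-u)$; the channel satisfies \eqref{US_op}, \eqref{Cont_mean2_op}, and the IS assumption of Definition~\ref{def:WSSUS_SIO} (increments over disjoint delay intervals are driven by increments of $N$ over disjoint intervals). Every jump of $Y(t_0,\cdot)$ has size $2a\geq a$ and every jump of $Y(s_0,\cdot)$ has size $a/2<a$, so the Levy--Ito components are $Y_j(t_0,u)=2aN_u$, $Y_d(t_0,u)=2au$, $Y_c(t_0,\cdot)=\widetilde Y_j(t_0,\cdot)=0$, while $\widetilde Y_j(s_0,u)=(a/2)(N_u-u)$ and $Y_c(s_0,\cdot)=Y_j(s_0,\cdot)=Y_d(s_0,\cdot)=0$. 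Consequently
\begin{equation*}
\rho^c_{s_0,t_0}([0,1))=\rho^j_{s_0,t_0}([0,1))=\widetilde\rho^j_{s_0,t_0}([0,1))=0,
\qquad
\rho_{s_0,t_0}([0,1))=\Exp{2a(N_1-1)\cdot\tfrac{a}{2}(N_1-1)}=a^2,
\end{equation*}
so \eqref{eq:IS_decomposition_rho} fails: all of the correlation sits in the mixed term $\Exp{(Y_j(t_0,1)-Y_d(t_0,1))\,\widetilde Y_j(s_0,1)}$. Note that this is also a gap in the paper's own proof: the sentence ``by independence, \eqref{eq:IS_decomposition_rho} is satisfied for a bounded interval'' implicitly applies Theorem~\ref{thm:Independent_additive_processes} to $Y(t,\cdot)$ and $Y(s,\cdot)$ restricted to a \emph{common} interval, but these are not independent processes (if they were, $\rho_{s,t}$ would vanish identically and the statement would be vacuous). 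So your plan, like the paper's, can only be completed under an additional hypothesis, e.g.\ that the joint Levy measure of the pair $(Y(t,\cdot),Y(s,\cdot))$ charges only the set where the two coordinates lie in the same truncation class, $\{|z_1|\geq a\}\cap\{|z_2|\geq a\}$ or $\{|z_1|<a\}\cap\{|z_2|<a\}$; under such a condition the two remaining cross terms vanish by Theorem~\ref{thm:Poisson_random_measure}.4 and your argument goes through.
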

\begin{proof}
By Definition~\ref{def:WSSUS_SIO}, $H$ satisfies the IS assumption if and only if the increments \\ $(Y(t,u_1) - Y(t,u_2))$ and $(Y(s,u_3) - Y(s,u_4))$ are independent for $u_1>u_2\geq u_3>u_4$ and $s,t\in\R$. By Theorem~\ref{thm:Independent_additive_processes} 
\begin{eqnarray*}
Y_c(t,u_1) - Y_c(t,u_2),
Y_j(t,u_1) - Y_j(t,u_2),
\widetilde{Y}_j(t,u_1) - \widetilde{Y}_j(t,u_2), \\
Y_c(s,u_3) - Y_c(s,u_4),
Y_j(s,u_3) - Y_j(s,u_4),
\widetilde{Y}_j(s,u_3) - \widetilde{Y}_j(s,u_4),
\end{eqnarray*}
are independent processes. In particular, $H_c, H_j$ and $\widetilde{H}_j$ satisfy the IS property. 
Second, by independence, \eqref{eq:IS_decomposition_rho} is satisfied for a bounded interval. Since each $\rho_{s,t}, \rho^c_{s,t}, \rho^j_{s,t}, \widetilde{\rho}^j_{s,t}$ are signed Borel measures on any $[-T,T]$, then \eqref{eq:IS_decomposition_rho} is satisfied for all bounded Borel sets.
\end{proof}

As a consequence of Theorem~\ref{thm:IS_decomposition}, we have
\begin{eqnarray}\label{eq:IS_decomposition_scattering}
\scattering = \scattering^c + \scattering^j + \widetilde{\scattering}^j
\end{eqnarray}
where $\scattering^c, \scattering^j, \widetilde{\scattering}^j$ are the scattering measures of $H_c, H_j, \widetilde{H}_j$ respectively.

\section{Outlook}

In this article, we showed that US linear time variant channels can be modelled as stochastic integral operators, and vice versa. Particularly, IS linear time variant channels can be decomposed as a sum of four independent channels: a deterministic, a random Gaussian, and two random jump components. We explained the nature of the random processes behind these independent channels, and also provided some of their properties. However, numerous natural questions are left open to be answered.

We have not provided a way to separate the given independent channels $H_c$, $H_j$ and $\widetilde{H}_j$. Wavelets are known to be effective to detect the jumps of functions like the sample paths of the additive process $X$. One can build estimators for the Levy measure \cite{DJKP93, CohendAles95, FH06, HW12, CDGK15, FCVGCChapter} thereby estimate $H_j$ and $\widetilde{H}_j$.

A classical related problem is the channel identification, e.g., \cite{Kailath62, Bello63, DSBS10, KP05, OPZ14, PW06, PZ14}. One can show that the identifiability of the underspread WSSUS linear time variant channels can be translated for SIOs quite naturally. It is an open question worth to be investigated whether we can improve the identifiability condition for IS channels. 

A third problem is finding the fixed eigenfunctions of a linear time variant random channel $H$, e.g. \cite{KM97}. There are open directions for SIOs, especially the ones that satisfy IS property but are not wide-sense stationary, e.g. \cite{Matz05, ACN11}.

We shall provide answers to those questions in subsequent articles.

\newpage
\bibliographystyle{amsplain}
\bibliography{W.bbl}

\end{document}